\newtheorem{theorem}{Theorem}
\newtheorem{lemma}[theorem]{Lemma}
\newtheorem{corollary}[theorem]{Corollary}
\newtheorem{open}{Open Problem}
\newtheorem{conj}{Conjecture}
\newcommand{\tr}{{\mathrm{Tr}}}
\newcommand{\gf}{{\mathrm{GF}}}
\newcommand{\PG}{{\mathrm{PG}}}
\newcommand{\AG}{{\mathrm{AG}}}
\newcommand{\GA}{{\mathrm{GA}}} 
\newcommand{\GaA}{{\mathrm{\Gamma A}}}
\newcommand{\PGL}{{\mathrm{PGL}}}
\newcommand{\VS}{{\mathrm{VS}}}
\newcommand{\Stab}{{\mathrm{Stab}}}
\newcommand{\cA}{{\mathcal{A}}} 
\newcommand{\cP}{{\mathcal{P}}} 
\newcommand{\cB}{{\mathcal{B}}}
\newcommand{\cS}{{\mathcal{S}}} 
\newcommand{\cH}{{\mathcal{H}}}
\newcommand{\cW}{{\mathcal{W}}}
\newcommand{\bD}{{\mathbb{D}}}
\newcommand{\Segre}{ {{\mathrm{Segre}}} }
\newcommand{\Glynnone}{ {{\mathrm{Glynni}}} }
\newcommand{\Glynntwo}{ {{\mathrm{Glynnii}}} }
\newcommand{\Payne}{ {{\mathrm{Payne}}} }
\newcommand{\Trans}{ {{\mathrm{Trans}}} }
\newcommand{\Cherowitzo}{ {{\mathrm{Cherowitzo}}} } 
\newcommand{\Subiaco}{ {{\mathrm{Subiaco}}} }
\begin{document}

\begin{frontmatter}

%% Title, authors and addresses

%% use the tnoteref command within \title for footnotes;
%% use the tnotetext command for the associated footnote;
%% use the fnref command within \author or \address for footnotes;
%% use the fntext command for the associated footnote;
%% use the corref command within \author for corresponding author footnotes;
%% use the cortext command for the associated footnote;
%% use the ead command for the email address,
%% and the form \ead[url] for the home page:
%%
%% \title{Title\tnoteref{label1}}
%% \tnotetext[label1]{}
%% \author{Name\corref{cor1}\fnref{label2}}
%% \ead{email address}
%% \ead[url]{home page}
%% \fntext[label2]{}
%% \cortext[cor1]{}
%% \address{Address\fnref{label3}}
%% \fntext[label3]{}

\title{Combinatorial $t$-designs from special polynomials  
\tnotetext[fn1]{C. Ding's research was supported by the Hong Kong Research Grants Council,
Proj. No. 16300415. C. Tang was supported by National Natural Science Foundation of China (Grant No.
11871058) and China West Normal University (14E013, CXTD2014-4 and the Meritocracy Research
Funds).}
}

%% use optional labels to link authors explicitly to addresses:
%% \author[label1,label2]{<author name>}
%% \address[label1]{<address>}
%% \address[label2]{<address>}
\author[cding]{Cunsheng Ding}
\ead{cding@ust.hk}
\author[cmt]{Chunming Tang}
\ead{tangchunmingmath@163.com}

%\cortext[lcj]{Corresponding author}
\address[cding]{Department of Computer Science and Engineering, The Hong Kong University of Science and Technology, Clear Water Bay, Kowloon, Hong Kong, China}
\address[cmt]{School of Mathematics and Information, China West Normal University, Nanchong, Sichuan,  637002, China}

\begin{abstract}
Combinatorial $t$-designs have nice applications in coding theory, finite geometries and 
several engineering areas. There are two major methods of constructing $t$-designs. 
One of them is via group actions of certain permutation groups which are $t$-transitive 
or $t$-homogeneous on some point set. The other is a coding-theoretical one. The  
objectives of this paper are to introduce two constructions of $t$-designs with special 
polynomials over finite fields $\gf(q)$, and obtain $2$-designs and $3$-designs with 
interesting parameters. A type of d-polynomials is defined and used to construct $2$-designs. 
Under the framework of the first construction, it is shown that 
every o-polynomial over $\gf(2^m)$ gives a $2$-design, and every o-monomial over 
$\gf(2^m)$ yields a $3$-design. Under the second construction, every $o$-polynomial 
gives a $3$-design. Some open problems and conjectures are also presented in this paper. 
\end{abstract}

\begin{keyword}
Hyperoval \sep o-polynomial \sep polynomial \sep projective plane \sep $t$-design.
%% PACS codes here, in the form: \PACS code \sep code

%% MSC codes here, in the form: \MSC code \sep code
%% or \MSC[2008] code \sep code (2000 is the default)
\MSC  51E21 \sep 05B05 \sep 12E10 

\end{keyword}

\end{frontmatter}

\section{Introduction}

Let $\cP$ be a set of $v \ge 1$ elements, and let $\cB$ be a set of $k$-subsets of $\cP$, where $k$ is
a positive integer with $1 \leq k \leq v$. Let $t$ be a positive integer with $t \leq k$. The pair
$\bD = (\cP, \cB)$ is called a $t$-$(v, k, \lambda)$ {\em design\index{design}}, or simply {\em $t$-design\index{$t$-design}}, if every $t$-subset of $\cP$ is contained in exactly $\lambda$ elements of
$\cB$. The elements of $\cP$ are called points, and those of $\cB$ are referred to as blocks.
We usually use $b$ to denote the number of blocks in $\cB$.  A $t$-design is called {\em simple\index{simple}} if $\cB$ does not contain repeated blocks. In this paper, we consider only simple 
$t$-designs.  A $t$-design is called {\em symmetric\index{symmetric design}} if $v = b$. It is clear that $t$-designs with $k = t$ or $k = v$ always exist. Such $t$-designs are {\em trivial}. In this paper, we consider only $t$-designs with $v > k > t$.
A $t$-$(v,k,\lambda)$ design is referred to as a {\em Steiner system\index{Steiner system}} if $t \geq 2$ and $\lambda=1$, and is denoted by $S(t,k, v)$.

By a special polynomial over a finite field we mean a polynomial either of special form or 
with special property. For instance, monomials and permutation polynomials are special 
polynomials. Special polynomials have interesting applications in combinatorial 
designs. For instance, the Dickson polynomials $x^5+ax^3+a^2x$ over $\gf(3^m)$ led to 
a 70-year breakthrough in searching for new skew Hadamard difference sets \cite{DY06}. 

A hyperoval in the projective space $\PG(2, \gf(2^m))$ is a set of $2^m+2$ points 
such that no three of them are collinear. O-polynomials are a special type of 
polynomials over $\gf(2^m)$ and correspond to hyperovals in the projective space 
$\PG(2, \gf(2^m))$ (see Theorem \ref{thm-hyperovaloply}). Hence, an o-polynomial can 
be viewed as a hyperoval, and vice versa. Hyperovals were used to construct two types 
of $2$-designs in the literature (see Theorems \ref{thm-hpodesign1} and \ref{thm-hpodesign2}). 
This means that o-polynomials can be used to construct $2$-designs indirectly (via 
their corresponding hyperovals). Motivated by this fact, in this paper we present two  
constructions of $t$-designs using o-polynomials directly. Specifically, we obtain 
$3$-designs from o-monomials and $2$-designs from general o-polynomials using the 
first construction. We obtain $3$-designs from o-polynomials using the second construction. 
We also introduce new types of polynomials over finite fields, which give also $2$-designs. 

The rest of this paper is arranged as follows. Section \ref{sec-construct} presents  
the general construction of $t$-designs from polynomials over general finite fields, 
and introduces a special type of polynomials, called \emph{design polynomial} 
(in short, d-polynomials). Section \ref{sec-designdpoly} investigates the designs 
of d-polynomials over $\gf(2^m)$. Section \ref{sec-designoploly} studies $3$-designs 
from o-monomials and $2$-designs from o-polynomials. Section \ref{sec-designgfq} 
deals with designs from polynomials over $\gf(q)$ for odd $q$. 
Section \ref{sec-extdendedconstr} introduces an extended construction of the one 
documented in Section \ref{sec-construct}, and deals with $2$-designs and 
$3$-designs via this extended construction.  
Section \ref{sec-summary} 
concludes this paper and makes concluding remarks.

\section{A construction for $t$-designs with polynomials over $\gf(q)$}\label{sec-construct} 

Let $q$ be a prime power, and let $f$ be a polynomial over $\gf(q)$, which is always viewed 
as a function from $\gf(q)$ to $\gf(q)$ throughout this paper. For each $(b, c) \in \gf(q)^2$, 
define 
\begin{eqnarray}
B_{(f,b,c)}=\{f(x)+bx+c:  x \in \gf(q)\}. 
\end{eqnarray} 
Let $k$ be an integer with $2 \leq k \leq q$. Define 
\begin{eqnarray}
\cB_{(f,k)}=\{B_{(f,b,c)}: |B_{(f,b,c)}|=k, \ b, \ c \in \gf(q)\}. 
\end{eqnarray} 
The incidence structure $\bD(f, k):=(\gf(q), \cB_{(f,k)})$ may be a $t$-$(q, k, \lambda)$ design for some $\lambda$, 
where $\gf(q)$ is the point set, and the incidence relation is the set membership. In this 
case, we say that the polynomial $f$ supports a $t$-$(q, k, \lambda)$ design.

The following is a general result about monomials. It shows an interesting application 
of monomials in the theory of combinatorial designs. 

\begin{theorem}\label{thm-380}
Let $f(x)=x^e$ be a permutation polynomial of $\gf(q)$, and let $k \geq 2$ be a positive integer 
such that $|\cB_{(f,k)}| \geq 1$. Then the incidence structure $\bD(f, k):=(\gf(q), \cB_{(f,k)})$ is a $2$-$(q, k, \lambda)$ for some $\lambda$.  
\end{theorem}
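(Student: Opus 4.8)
The plan is to produce a large automorphism group of the incidence structure $\bD(f,k)$ and then invoke the classical principle that an automorphism group acting transitively on the $2$-subsets of the point set forces a $2$-design. The natural candidate is the one-dimensional affine group $\AGL(1,q)$ acting on $\gf(q)$ by $y \mapsto \alpha y + \beta$ with $\alpha \in \gf(q)^{*}$, $\beta \in \gf(q)$, which is sharply $2$-transitive on $\gf(q)$. The only genuine work is to verify that this action permutes the block family $\cB_{(f,k)}$ among itself; everything after that is formal.

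To that end I would first record that, since $f(x)=x^{e}$ is a permutation polynomial of $\gf(q)$, necessarily $\gcd(e,q-1)=1$, so $z \mapsto z^{e}$ is a bijection of $\gf(q)$ (and of $\gf(q)^{*}$); in particular, for every $\alpha \in \gf(q)^{*}$ there is a unique $\gamma \in \gf(q)^{*}$ with $\alpha \gamma^{e}=1$. Now fix $\sigma \in \AGL(1,q)$, $\sigma(y)=\alpha y + \beta$, and a block $B_{(f,b,c)}$. Reparametrising the defining set by the bijection $x=\gamma z$ of $\gf(q)$ gives
\begin{eqnarray*}
\sigma\bigl(B_{(f,b,c)}\bigr)
=\{\alpha \gamma^{e} z^{e}+\alpha b \gamma z+\alpha c+\beta : z \in \gf(q)\}
=\{z^{e}+(\alpha b \gamma)z+(\alpha c+\beta): z \in \gf(q)\},
\end{eqnarray*}
which is exactly $B_{(f,\,\alpha b \gamma,\,\alpha c+\beta)}$. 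Since $\sigma$ is a bijection of $\gf(q)$ we have $|\sigma(B_{(f,b,c)})|=|B_{(f,b,c)}|$, so $\sigma$ carries blocks of size $k$ to blocks of size $k$; as $\sigma^{-1}\in\AGL(1,q)$ is of the same form, $\sigma$ restricts to a permutation of $\cB_{(f,k)}$. Hence $\AGL(1,q)$ is a group of automorphisms of $\bD(f,k)$.

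Finally I would use $2$-transitivity (hence $2$-homogeneity) of $\AGL(1,q)$ on $\gf(q)$: given any two $2$-subsets $T_{1},T_{2}\subseteq\gf(q)$, pick $\sigma\in\AGL(1,q)$ with $\sigma(T_{1})=T_{2}$; then $B\mapsto\sigma(B)$ is a bijection from $\{B\in\cB_{(f,k)}: T_{1}\subseteq B\}$ onto $\{B\in\cB_{(f,k)}: T_{2}\subseteq B\}$, so the number $\lambda$ of blocks through a $2$-subset does not depend on the subset. Since $|\cB_{(f,k)}|\ge 1$ and every block, having size $k\ge 2$, contains a $2$-subset, homogeneity forces $\lambda\ge 1$; thus $\bD(f,k)$ is a $2$-$(q,k,\lambda)$ design.

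I do not expect a real obstacle here. The single step that actually uses the hypothesis — and the only one worth any care — is the reparametrisation $x=\gamma z$ that absorbs the scalar $\alpha$ into the $z^{e}$ term: this is possible precisely because $z\mapsto z^{e}$ is onto, i.e.\ because $f$ is a permutation monomial, and without this the block family need not be $\AGL(1,q)$-invariant and the whole argument breaks. The remainder is the standard passage from a $2$-homogeneous automorphism group to a $2$-design.
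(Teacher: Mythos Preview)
Your proof is correct and follows essentially the same route as the paper: both show that the one-dimensional affine group $\GA_1(\gf(q))$ preserves $\cB_{(f,k)}$ via the substitution that absorbs the scalar into the $e$-th power term (the paper writes $u(x^e+bx+c)+v=(u^{1/e}x)^e+u^{1-1/e}b(u^{1/e}x)+cu+v$, which is your reparametrisation with $\gamma=u^{-1/e}$), and then invoke double transitivity. Your write-up is slightly more explicit in spelling out why $2$-transitivity of the automorphism group yields constant $\lambda$, whereas the paper simply asserts that the conclusion follows.
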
 

\begin{proof} 
The general affine group $\GA_1(\gf(q))$ is defined by 
$$ 
\GA_1(\gf(q)):=\{ux+v: (u, v) \in \gf(q)^* \times \gf(q)\}.  
$$ 
Let $\sigma(x)=ux+v \in \GA_1(\gf(q))$, where $u \in \gf(q)^*$ and $v \in \gf(q)$. 
Note that $\gcd(e, q-1)=1$. Let $1/e$ denote the multiplicative inverse of $e$ modulo 
$q-1$. We have then  
\begin{eqnarray*}
u(f(x)+bx+c)+v 
&=& ux^e+ubx+cu + v \\ 
&=& (u^{1/e}x)^e + u^{1-1/e}b(u^{1/e}x)+ cu+v.  
\end{eqnarray*} 
We then deduce that $\sigma(B_{(f,b,c)})=B_{(f, u^{1-1/e}b, cu+v)}$. This means that 
the general affine group $\GA_1(\gf(q))$ fixes $\cB_{(f,k)}$. It is well known that 
$\GA_1(\gf(q))$ acts on $\gf(q)$ doubly transitively. The desired conclusion then 
follows. 
\end{proof} 

Two designs $\bD(\cP, \cB)$ and $\bD(\cP', \cB')$ are said to be \emph{isomorphic} if 
there is a 1-to-1 mapping $\sigma$ from $\cP$ to $\cP'$ such that $\sigma$ sends 
each block in $\cB$ to a block in $\cB'$. Such a $\sigma$ is called an isomorphism 
from $\bD(\cP, \cB)$ to $\bD(\cP', \cB')$. An isomorphism from $\bD(\cP, \cB)$ to 
$\bD(\cP, \cB)$ is called an \emph{automorphism} of $\bD(\cP, \cB)$. All automorphisms 
of $\bD(\cP, \cB)$ form a group under the function composition, and is called the 
automorphism group of $\bD(\cP, \cB)$. It is straightforward to prove the following theorem. 

\begin{theorem}\label{thm-isomorphy1}
Let $f$ and $g$ be two polynomials over $\gf(q)$ such that $\bD(f, k)$ and $\bD(g,k)$ 
are $t$-designs. If there are $h \in \gf(q)^*$, $u\in \gf(q)^*$ and $v \in \gf(q)$ 
such that $g(x)=hf(ux+v)$ for all $x \in \gf(q)$, then $\bD(f, k)$ and $\bD(g,k)$ are isomorphic. 
\end{theorem}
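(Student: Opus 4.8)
The plan is to exhibit an explicit isomorphism $\sigma$ between the point sets and verify that it carries blocks to blocks. Since both designs have point set $\gf(q)$, the natural candidate is the affine map $\sigma(x) = ux+v$, which is a bijection of $\gf(q)$ because $u \in \gf(q)^*$. It then suffices to show that $\sigma$ sends each block $B_{(f,b,c)} \in \cB_{(f,k)}$ to a block in $\cB_{(g,k)}$, and vice versa; because $\sigma$ is a bijection it preserves cardinalities, so a block of size $k$ is mapped to a set of size $k$, and one only needs the set-membership relation $\sigma(B_{(f,b,c)}) \in \cB_{(g,k)}$.

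First I would compute $\sigma(B_{(f,b,c)})$ directly. By definition,
\begin{eqnarray*}
\sigma(B_{(f,b,c)}) &=& \{u(f(x)+bx+c)+v : x \in \gf(q)\}.
\end{eqnarray*}
Using $g(x) = hf(ux+v)$, equivalently $f(ux+v) = h^{-1}g(x)$, I would substitute $y = ux+v$ (a bijective change of variable on $\gf(q)$) so that $x$ ranges over $\gf(q)$ iff $y$ does, and rewrite $f(y) = f(ux+v) = h^{-1}g(x)$. Hence
\begin{eqnarray*}
\sigma(B_{(f,b,c)}) &=& \{u f(y) + ub x + uc + v : x \in \gf(q),\ y = ux+v\} \\
&=& \{u h^{-1} g(x) + b(ux+v) + uc - bv + v : x \in \gf(q)\}.
\end{eqnarray*}
The term $b(ux+v) = by$ needs to be re-expressed in terms of $g$; this is where a little care is required, since a clean match with $B_{(g, b', c')}$ wants every $x$-dependence to sit inside $g$ and a linear-in-$x$ term. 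The cleanest route is to keep everything in the variable $x$: $\sigma(B_{(f,b,c)}) = \{u h^{-1} g(x) + (bu) x + (bv + uc + v)\}$. If the block-forming family for $g$ were $\{\alpha g(x) + \beta x + \gamma\}$ for $\alpha \in \gf(q)^*$ this would be immediate; since in $\bD(g,k)$ the family is $\{g(x) + \beta x + \gamma\}$ with leading coefficient $1$, I would absorb the scalar $u h^{-1}$ by noting that the design $\bD(g,k)$ is unchanged under replacing $g$ by $h' g$ — but that is not quite granted. The honest fix is to observe that it is enough to match $\sigma(B_{(f,b,c)})$ with some $B_{(g,b',c')}$ up to an overall affine transformation of $\gf(q)$; composing $\sigma$ with the scaling $z \mapsto h u^{-1} z$ turns $u h^{-1} g(x) + (bu)x + \cdots$ into $g(x) + h b x + \cdots$, which is exactly $B_{(g,\, hb,\, c')}$ for a suitable $c' \in \gf(q)$. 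Thus the composed map $\sigma'(x) := h u^{-1}(u(f(x))+\cdots)$, or more precisely $\sigma'(z) = hu^{-1}\sigma(z)$ applied appropriately, is the desired isomorphism.

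I would then record the two explicit correspondences $(b,c) \mapsto (b', c')$ and its inverse, noting that $b \mapsto hb$ (or $ub$, depending on which composition one fixes) is a bijection of $\gf(q)$ and $c \mapsto c'$ is, for each fixed $b$, an affine bijection of $\gf(q)$; hence the map on parameter pairs is a bijection $\gf(q)^2 \to \gf(q)^2$. Consequently $\sigma'$ maps $\cB_{(f,k)}$ onto $\cB_{(g,k)}$ (the size-$k$ condition is automatically transported because $\sigma'$ is a bijection of points), so $\sigma'$ is an isomorphism from $\bD(f,k)$ to $\bD(g,k)$.

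The main obstacle is purely bookkeeping: the block families $\cB_{(f,k)}$ and $\cB_{(g,k)}$ are normalized to have leading coefficient $1$ on the nonlinear part, so the scalar $h$ coming from $g(x) = hf(ux+v)$ does not disappear by itself and must be absorbed into the point-set map rather than the parameter substitution. Once one commits to letting the isomorphism be the affine map $x \mapsto ux+v$ post-composed (or the block parameters be transformed) so as to soak up both $u$ and $h$, everything reduces to the identity $f(ux+v) = h^{-1}g(x)$ and a change of variables, with no real combinatorial content — which is why the statement is flagged as straightforward to prove.
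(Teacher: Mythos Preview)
The paper does not actually give a proof of this theorem; it merely declares it ``straightforward to prove.'' Your approach --- exhibit an explicit affine bijection of $\gf(q)$ and check that it carries each $B_{(f,b,c)}$ to some $B_{(g,b',c')}$ --- is exactly the intended one, and your conclusion is correct.

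Two small remarks. First, there is a variable slip in your substitution: starting from $\sigma(B_{(f,b,c)})=\{uf(x)+ubx+uc+v:x\in\gf(q)\}$, the change of variable that invokes $g(y)=hf(uy+v)$ is $x=uy+v$, not $y=ux+v$; carrying this through yields the linear coefficient $u^{2}b$ rather than $ub$. This does not affect your argument, since you only use that the result has the shape $\{uh^{-1}g(y)+b''y+c'':y\in\gf(q)\}$. Second, you can avoid the detour and the post-hoc rescaling entirely by taking the isomorphism to be $\sigma(z)=hz$ from the outset. Then, with the single substitution $x=uy+v$,
\[
\sigma(B_{(f,b,c)})=\{hf(x)+hbx+hc:x\in\gf(q)\}=\{g(y)+hub\,y+hbv+hc:y\in\gf(q)\}=B_{(g,\,hub,\,hbv+hc)},
\]
and since $(b,c)\mapsto(hub,\,hbv+hc)$ is a bijection of $\gf(q)^{2}$ while $\sigma$ preserves cardinalities, $\sigma$ maps $\cB_{(f,k)}$ bijectively onto $\cB_{(g,k)}$.
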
 

We define the \emph{value spectrum} of a polynomial over $\gf(q)$ to be the multiset 
\begin{eqnarray*}
\VS(f)=\{\{ |B_{(f,b,c)}|: (b,c) \in \gf(q)^2 \}\}.  
\end{eqnarray*} 
To determine the parameters of $t$-designs supported by a polynomial $f$, we need to 
know the value spectrum of a polynomial $f$. The value spectrum of a polynomial is hard 
to determine in general, but can be done in special cases. 

We call a permutation polynomial of $\gf(q)$ a \emph{design polynomial} (in short, 
d-polynomial) if the size $|\{f(x)+bx\}|$ is a constant for all $b \in \gf(q)^*$. 
As will be seen later, some d-polynomials supports $2$-designs or $3$-designs with 
interesting parameters under this construction framework.

\section{Designs from d-monomials over $\gf(2^m)$}\label{sec-designdpoly} 

Throughout this section, let $q=2^m$ for some positive integer $m$. Our objective in 
this section is to search for d-monomials and consider the parameters of their 
$2$-designs. As will be seen soon, determining the block size and the number of blocks 
in the $2$-design supported by a d-monomial could be extremely hard. There are a 
number of such d-monomials. Some of them are treated in this section, and some will 
be investigated in Section \ref{sec-designoploly}.

\begin{lemma}\label{lem-381}
Let $f(x)=x^e$ be a polynomial over $\gf(q)$ such $\gcd(e(e-1), q-1)=1$. 
Then $f(x)$ is a d-polynomial over $\gf(q)$. 
\end{lemma}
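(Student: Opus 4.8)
The statement has two assertions bundled together: first that $f(x)=x^e$ is a permutation polynomial of $\gf(q)$, and second that $|\{f(x)+bx : x \in \gf(q)\}|$ is a constant as $b$ ranges over $\gf(q)^*$. The first part is immediate: $\gcd(e(e-1),q-1)=1$ forces $\gcd(e,q-1)=1$, so $x^e$ permutes $\gf(q)$. The content of the lemma is the second part, and the plan is to show the even stronger fact that $x^e+bx$ is itself a permutation polynomial of $\gf(q)$ for every $b \in \gf(q)^*$; then the size $|\{f(x)+bx\}|$ equals $q$ for all such $b$, which is certainly a constant, and $f$ qualifies as a d-polynomial.

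So the core step is: assuming $\gcd(e(e-1),q-1)=1$, prove that $g_b(x):=x^e+bx$ permutes $\gf(q)$ for each fixed $b\in\gf(q)^*$. First I would dispose of the case $x=0$, which maps to $0$, and then work on $\gf(q)^*$. The natural move is to write $b=a^{e-1}$ for the unique $a\in\gf(q)^*$ with this property — unique because $\gcd(e-1,q-1)=1$ — and substitute $x=ay$. Then $g_b(ay)=a^e y^e + a^{e-1}\cdot a y = a^e(y^e+y)$, so up to the bijective scalings $y\mapsto ay$ on the domain and $z\mapsto a^e z$ on the codomain, $g_b$ is equivalent to $y\mapsto y^e+y$. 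Hence it suffices to show $y^e+y$ permutes $\gf(q)$, or equivalently (subtracting the $y=0$ image) that it is injective on $\gf(q)$: if $y_1^e+y_1=y_2^e+y_2$ then $(y_1-y_1^e)=(y_2-y_2^e)$... actually the clean way is to reduce to showing that $y\mapsto y^e+y$ has no nonzero element in a difference set, i.e. $y^e+y = z^e+z$ with $y\neq z$ is impossible. Setting $y=zw$ with $z\neq 0$, this becomes $z^e w^e + zw = z^e+z$, i.e. $z^{e-1}(w^e-1)=-(w-1)=w-1$ (characteristic free at this stage, but in characteristic $2$ the sign is irrelevant anyway), so $z^{e-1}=(w-1)/(w^e-1)$ when $w\neq 1$; since $\gcd(e-1,q-1)=1$ the map $z\mapsto z^{e-1}$ is a bijection on $\gf(q)^*$, so this has a solution for essentially every $w$, which is the wrong direction. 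Let me instead reduce directly via $\gcd(e,q-1)=1$: the equation $y^e + y = c$ for fixed $c$ — hmm, this is not obviously resolved by a monomial substitution.

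The cleaner route, and the one I would actually carry out, is this: we want $y^e+y$ to be a permutation. Consider the difference $y_1^e - y_2^e = y_2 - y_1$ for $y_1\neq y_2$; write $y_1 = u$, $y_2 = tu$ with $u\in\gf(q)^*$ and $t \in \gf(q)\setminus\{1\}$ (handling $y_2=0$, i.e. $u^e = -u$, i.e. $u^{e-1}=-1$, separately — in char $2$ this says $u^{e-1}=1$ so $u=1$ since $\gcd(e-1,q-1)=1$, giving $1+1=0^e+0=0$, a genuine collision!). So in characteristic $2$ we must be more careful; I'd recheck, but note $1^e+1 = 0$ and $0^e+0=0$, so $y\mapsto y^e+y$ is \emph{never} injective on $\gf(2^m)$. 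Therefore the correct claim is not that $g_b$ is a permutation but that its image has constant size. I would then compute $|\{x^e+bx\}|$ directly: with the substitution $x=ay$ above it equals $|\{y^e+y : y\in\gf(q)\}|$, which manifestly does not depend on $b$. This is the whole argument — the key insight is just that $\gcd(e-1,q-1)=1$ lets us absorb $b$ into a scaling of the variable, reducing every $b\in\gf(q)^*$ to the single case $b=1$, whence the size is trivially independent of $b$. The only mild subtlety, and the step I'd be most careful about, is verifying that $\gcd(e-1,q-1)=1$ genuinely follows from the hypothesis and that the substitution $x=ay$ with $a^{e-1}=b$ is legitimate (existence and uniqueness of $a$), after which $f(x)+bx = a^e(y^e+y)$ shows $|\{f(x)+bx\}| = |\{y^e+y : y \in \gf(q)\}|$ is the same constant for all $b\in\gf(q)^*$.
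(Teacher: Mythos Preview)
Your final argument is correct and is essentially identical to the paper's proof: use $\gcd(e-1,q-1)=1$ to write $b=u^{e-1}$ for a unique $u\in\gf(q)^*$, substitute $x=uy$, and conclude $|\{x^e+bx\}|=|\{u^e(y^e+y)\}|=|\{y^e+y\}|$ is independent of $b$. The lengthy detour attempting to show that $x^e+bx$ is a permutation is unnecessary and, as you yourself discovered, false in characteristic $2$ (indeed $0$ and $1$ always collide); you should simply delete that exploration and present only the last paragraph as the proof.
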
 

\begin{proof}
Since $x^{e-1}$ is a permutation of $\gf(q)$, for each $b \in \gf(q)^*$ there is a unique 
$u \in \gf(q)^*$ such that $u^{e-1}=b$. We have then 
\begin{eqnarray*}
|\{x^e+bx: x \in \gf(q)\}| 
&=& |\{(uy)^e+buy: y \in \gf(q)\}| \\
&=& |\{u^e(y^e+y): y \in \gf(q)\}| \\ 
&=& |\{y^e+y: y \in \gf(q)\}|. 
\end{eqnarray*} 
By definition, $f(x)=x^e$ is a d-polynomial. 
\end{proof} 

There are a number of monomials $f(x)=x^e$ satisfying $\gcd(e(e-1), q-1)=1$. 
Such $d$-monomials over $\gf(q)$ support $2$-designs by Theorem \ref{thm-380}. 
It will be shown later that some of them support $3$-designs.

The following theorem presents a large number of $2$-designs supported by a class of d-monomials. 

\begin{theorem}\label{thm-main2design1}
Let $e$ be a positive integer with $\gcd(e(e-1), q-1)=1$. Define 
\begin{eqnarray}\label{eqn-Je1}
J_e=\{x^e+x: x \in \gf(q)\} 
\end{eqnarray} 
and 
$$ 
\Stab_{\AG_1(\gf(q))}(J_e)=\{ux+v: (u,v) \in \gf(q)^* \times \gf(q), \ uJ_e+v=J_e\}. 
$$ 
Then the incidence structure $\bD(x^e, k):=(\gf(q), \cB_{(x^e,k)})$ is a $2$-$(q, k, (k-1)k/\mu)$ design, where $k=|J_e|$ and 
$\mu=|\Stab_{\AG_1(\gf(q))}(J_e)|$. 
\end{theorem}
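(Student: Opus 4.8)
The plan is to identify the block set $\cB_{(x^e,k)}$ precisely, invoke Theorem~\ref{thm-380} to conclude that $\bD(x^e,k)$ is a $2$-design, and then recover the exact value of $\lambda$ by counting the number of blocks through an orbit--stabilizer argument for the action of $\GA_1(\gf(q))$ on the blocks.

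First I would describe the blocks. Since $\gcd(e,q-1)=1$, the monomial $x^e$ is a permutation of $\gf(q)$, so $B_{(x^e,0,c)}=\gf(q)$ for every $c$, a block of size $q$. For $b\in\gf(q)^*$, Lemma~\ref{lem-381} together with its proof gives $|B_{(x^e,b,c)}|=|\{x^e+bx:x\in\gf(q)\}|=|J_e|=k$, since translating by $c$ does not change the cardinality. Because $\gf(q)$ has characteristic $2$, both $0$ and $1$ are mapped to $0$ by $x\mapsto x^e+x$, so $k=|J_e|\le q-1<q$; hence $|B_{(x^e,b,c)}|=k$ if and only if $b\ne 0$, and therefore $\cB_{(x^e,k)}=\{B_{(x^e,b,c)}:b\in\gf(q)^*,\ c\in\gf(q)\}$. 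This set is nonempty, as it contains $J_e=B_{(x^e,1,0)}$, so the hypothesis of Theorem~\ref{thm-380} holds and $\bD(x^e,k)$ is a $2$-$(q,k,\lambda)$ design for some $\lambda$.

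Next I would show that $\GA_1(\gf(q))$ acts transitively on $\cB_{(x^e,k)}$. The computation in the proof of Theorem~\ref{thm-380} shows that $\sigma(x)=ux+v$ sends $B_{(x^e,b,c)}$ to $B_{(x^e,\,u^{1-1/e}b,\,cu+v)}$. The hypothesis $\gcd(e(e-1),q-1)=1$ forces $\gcd(e-1,q-1)=1$, so the exponent $1-1/e\equiv (e-1)e^{-1}\pmod{q-1}$ is a unit modulo $q-1$ and the map $u\mapsto u^{1-1/e}$ is a permutation of $\gf(q)^*$. Applying $\sigma$ to $J_e=B_{(x^e,1,0)}$ and letting $(u,v)$ range over $\gf(q)^*\times\gf(q)$ produces every $B_{(x^e,b',c')}$ with $b'\in\gf(q)^*$ and $c'\in\gf(q)$; hence the orbit of $J_e$ is all of $\cB_{(x^e,k)}$. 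The stabilizer of $J_e$ in $\GA_1(\gf(q))$ is $\{ux+v:(u,v)\in\gf(q)^*\times\gf(q),\ uJ_e+v=J_e\}=\Stab_{\AG_1(\gf(q))}(J_e)$, which has order $\mu$. Since $|\GA_1(\gf(q))|=q(q-1)$, the orbit--stabilizer theorem gives $b:=|\cB_{(x^e,k)}|=q(q-1)/\mu$.

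Finally, counting incidences between blocks and the $2$-subsets they contain in the $2$-$(q,k,\lambda)$ design yields $b\binom{k}{2}=\lambda\binom{q}{2}$, so $\lambda=bk(k-1)/(q(q-1))=k(k-1)/\mu$, which is the claimed value. The only step that requires genuine care is the transitivity claim in the third paragraph: one must carry the affine action through the parametrization by $(b,c)$ and verify that $u\mapsto u^{1-1/e}$ is a bijection of $\gf(q)^*$, which is exactly where the full hypothesis $\gcd(e(e-1),q-1)=1$ is needed; everything else is routine bookkeeping.
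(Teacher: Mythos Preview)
Your proof is correct and follows essentially the same route as the paper's: you identify $\cB_{(x^e,k)}$ with the $\GA_1(\gf(q))$-orbit of $J_e$, use double transitivity to get a $2$-design, and then compute $\lambda$ via orbit--stabilizer and the standard identity $b\binom{k}{2}=\lambda\binom{q}{2}$. The paper does the same, except that it reaches the description $\cB_{(x^e,k)}=\{bJ_e+c:(b,c)\in\gf(q)^*\times\gf(q)\}$ directly from the substitution in the proof of Lemma~\ref{lem-381} (rather than via the action formula from Theorem~\ref{thm-380}), and then packages the orbit--stabilizer and $\lambda$ computation into a single citation of Proposition~4.6 in \cite[p.~175]{BJL}; your version simply unpacks that citation.
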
 

\begin{proof}
It follows from Lemma \ref{lem-381} and its proof that $f(x)=x^e$ is a d-monomial 
and that 
$$ 
\cB_{(x^e,k)}=\{bJ_e+c: (b,v) \in \gf(q)^* \times \gf(q) \}. 
$$ 
Note that the general affine group acts on $\gf(q)$ doubly transitively and fixes 
$\cB_{(x^e,k)}$. The desired 
conclusion then follows from Proposition 4.6 in \cite[p. 175]{BJL}. 
\end{proof}

\begin{theorem}\label{thm-omonomials1}
Let $m \geq 3$ be odd and $q=2^m$. 
The following is a list of d-monomials $x^e$ over $\gf(q)$ satisfying the condition of Theorem 
\ref{thm-main2design1}. 
\begin{itemize}
\item $e=2^h+1$, where $\gcd(h, m)=1$. 
\item $e=2^{(m-1)/2}+3$. 
\item $e=2^{2h}-2^h+1$, where $\gcd(h, m)=1$. 
\item $e= 2^{(m-1)/2}+2^{(m-1)/4}-1$, where $m \equiv 1 \pmod{4} \geq 5$.  
\item $e= 2^{(m-1)/2}+2^{(3m-1)/4}-1$, where $m \equiv 3 \pmod{4}$.
\end{itemize} 
\end{theorem}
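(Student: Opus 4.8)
The plan is to verify, for each exponent $e$ in the list, the single arithmetic condition $\gcd(e(e-1),q-1)=1$ with $q=2^m$; once this holds, Lemma~\ref{lem-381} shows $x^e$ is a d-monomial and Theorem~\ref{thm-main2design1} applies, so the substance of the proof is entirely number-theoretic. Since $q-1=2^m-1$ is odd, and $e(e-1)$ is always even, I first observe that the factor $2$ is irrelevant: we only need to check that no odd prime divides both $e(e-1)$ and $2^m-1$. Equivalently, writing $n=2^m-1$, I must show $\gcd(e,n)=\gcd(e-1,n)=1$ for each candidate $e$. The standard tool is the identity $\gcd(2^a-1,2^b-1)=2^{\gcd(a,b)}-1$ together with the fact that the $e$ in the list are each $\pm 1$ away from, or a small combination of, terms of the form $2^a\pm 1$.

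I would handle the cases one at a time. For $e=2^h+1$ with $\gcd(h,m)=1$: here $e-1=2^h$ which is coprime to $n$ automatically, while $\gcd(e,n)=\gcd(2^h+1,2^m-1)$; since $2^h+1$ divides $2^{2h}-1$, and $\gcd(2^{2h}-1,2^m-1)=2^{\gcd(2h,m)}-1=2^{\gcd(2,m)/\ldots}-1$, using $m$ odd and $\gcd(h,m)=1$ one gets $\gcd(2h,m)=1$, so the gcd is $2^1-1=1$, forcing $\gcd(2^h+1,2^m-1)\mid 1$. For $e=2^{2h}-2^h+1$: note $e(2^h+1)=2^{3h}+1$, so any common divisor of $e$ and $2^m-1$ divides $\gcd(2^{3h}+1,2^m-1)$, which divides $\gcd(2^{6h}-1,2^m-1)=2^{\gcd(6h,m)}-1$; with $m$ odd and $\gcd(h,m)=1$ one checks $\gcd(6h,m)=\gcd(3,m)$ — here a little care is needed when $3\mid m$, but then $2^3-1=7$ and one verifies $7\nmid 2^{3h}+1$ directly since $2^{3h}+1\equiv 2\pmod 7$. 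For $e-1=2^{2h}-2^h=2^h(2^h-1)$, coprimality with $2^m-1$ reduces to $\gcd(2^h-1,2^m-1)=2^{\gcd(h,m)}-1=1$. The mixed exponents $e=2^{(m-1)/2}+3$, $e=2^{(m-1)/2}+2^{(m-1)/4}-1$ and $e=2^{(m-1)/2}+2^{(3m-1)/4}-1$ are treated similarly: in each case I would compute $e$ and $e-1$ modulo each prime divisor $p$ of $2^m-1$ by writing things in terms of the order of $2$ modulo $p$ (which divides $m$), exploit that $m$ is odd (and $m\equiv 1$ or $3\pmod 4$ respectively, so that $(m-1)/4$ or $(3m-1)/4$ is an integer), and reduce to a finite check using $\gcd(2^a-1,2^b-1)=2^{\gcd(a,b)}-1$.

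The main obstacle I anticipate is bookkeeping: for the two "quartic-index" exponents the relevant identities analogous to $e(2^h+1)=2^{3h}+1$ are less obvious, and one must find the right multiplier that turns $e$ into a $\pm 1$-shifted power of $2$ so the gcd identity can be invoked; absent such an algebraic shortcut one falls back on analyzing $\ord_p(2)$ for each $p\mid 2^m-1$ and checking $e\not\equiv 0$ and $e\not\equiv 1\pmod p$, which requires knowing enough about how $(m\pm 1)/4$ interacts with divisors of $m$. A secondary point is the edge cases where a small prime such as $3$ or $7$ divides $2^m-1$ (this happens exactly when $2\mid m$ or $3\mid m$); since we are given $m$ odd, $3\nmid 2^m-1$, which eliminates one family of nuisances, but $7\mid 2^m-1$ whenever $3\mid m$, so those subcases should be dispatched by an explicit residue computation. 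Once all five items survive these checks, invoking Theorem~\ref{thm-main2design1} with $k=|J_e|$ and $\mu=|\Stab_{\AG_1(\gf(q))}(J_e)|$ completes the argument; no further work on the design parameters themselves is needed here.
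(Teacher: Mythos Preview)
Your proposal is correct and follows exactly the paper's approach: the paper's own proof simply asserts that ``in all the cases above, it can be verified that $\gcd(e(e-1),q-1)=1$'' and then invokes Lemma~\ref{lem-381}. You supply considerably more of that verification than the paper does (your treatment of the Gold and Kasami exponents via $\gcd(2^a-1,2^b-1)=2^{\gcd(a,b)}-1$ is exactly the intended computation), and your outline for the Welch and Niho-type exponents is the right strategy; the paper leaves all of this to the reader.
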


\begin{proof}
In all the cases above, it can be verified that $\gcd(e(e-1), q-1)=1$. It then 
follows from Lemma \ref{lem-381} that $x^e$ is a d-polynomial.  
\end{proof}

\begin{theorem}\label{thm-omonomials2}
Let $m$ be even and $q=2^m$. 
The following is a list of d-monomials $x^e$ over $\gf(q)$ satisfying the condition of Theorem 
\ref{thm-main2design1}. 
\begin{itemize} 
\item $e=2^h+1$, where $m/\gcd(h, m)$ is odd. 
\item $e=2^{m/2} + 2^{(m+2)/4}+1$, where $m \equiv 2 \pmod{8}$. 
\item $e=2^{(m-2)/2} -3$, where $m \equiv 4 \pmod{24}$ or $m \equiv 20 \pmod{24}$. 
\item $e=2^{(m+2)/2} -3$, where $m \equiv 0 \pmod{24}$ or $m \equiv 8 \pmod{24}$ or 
      $m \equiv 16 \pmod{24}$.  
\end{itemize} 
\end{theorem}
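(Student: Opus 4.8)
The plan is to verify, for each of the four exponents $e$ listed, that $\gcd(e(e-1), 2^m-1)=1$ under the stated congruence conditions on $m$, and then invoke Lemma~\ref{lem-381} to conclude that $x^e$ is a d-polynomial (and hence, by Theorem~\ref{thm-main2design1}, supports a $2$-design). Since $e(e-1)=e\cdot(e-1)$ is a product of two consecutive integers, it suffices to prove separately that $\gcd(e, 2^m-1)=1$ and $\gcd(e-1, 2^m-1)=1$ for each case. The standard tool is the elementary fact that $\gcd(2^a-1, 2^b-1)=2^{\gcd(a,b)}-1$, together with the observation that the listed exponents are (up to sign and small additive constants) sums of powers of $2$, so that modulo $2^m-1$ they are closely related to numbers of the form $2^a\pm 2^b$.

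For the first family, $e=2^h+1$ with $m/\gcd(h,m)$ odd: I would set $d=\gcd(h,m)$ and write $m=dm'$ with $m'$ odd. Then $\gcd(2^h+1, 2^m-1)$: since $2^{2h}-1=(2^h-1)(2^h+1)$, any common divisor of $2^h+1$ and $2^m-1$ divides $\gcd(2^{2h}-1,2^m-1)=2^{\gcd(2h,m)}-1$; because $m/d$ is odd, $\gcd(2h,m)=\gcd(2h,dm')=d\gcd(2h/d \cdot \text{(stuff)})$—more carefully, $\gcd(2h,m)=d$ when $m/d$ is odd (no extra factor of $2$ can be absorbed), so the common divisor divides $2^d-1=\gcd(2^h-1,2^m-1)$, forcing it to divide $\gcd(2^h+1,2^h-1)=\gcd(2^h+1,2)=1$. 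And $\gcd(e-1,2^m-1)=\gcd(2^h,2^m-1)=1$ since $2^m-1$ is odd. For the remaining three families ($e=2^{m/2}+2^{(m+2)/4}+1$, $e=2^{(m-2)/2}-3$, $e=2^{(m+2)/2}-3$), I would reduce $e$ and $e-1$ modulo $2^m-1$ to bring them into the form $\pm(2^a-2^b)$ or $\pm(2^a+2^b)$ or a small multiple thereof, then factor out the obvious power of $2$ (a unit mod $2^m-1$) and again apply $\gcd(2^a\pm 1, 2^m-1)=\gcd(2^{2a}-1,2^m-1)/\gcd(2^a-1,2^m-1)$ type identities, tracking carefully how the specific residue classes $m\equiv 2\pmod 8$, $m\equiv 4,20\pmod{24}$, $m\equiv 0,8,16\pmod{24}$ make the relevant gcds of exponents collapse to $1$ or $2$.

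Concretely, the bookkeeping goes: for $e=2^{(m-2)/2}-3$ one has $e-1=2^{(m-2)/2}-4=4(2^{(m-6)/2}-1)$, so $\gcd(e-1,2^m-1)=\gcd(2^{(m-6)/2}-1,2^m-1)=2^{\gcd((m-6)/2,m)}-1$; the condition $m\equiv 4$ or $20\pmod{24}$ is exactly what forces $\gcd((m-6)/2,m)=1$. For $\gcd(e,2^m-1)=\gcd(2^{(m-2)/2}-3,2^m-1)$ one multiplies by a suitable unit or squares the relation: $2\cdot e = 2^{m/2}-6$, and $2^{m/2}\equiv$ something tractable—here I would use $2^m\equiv 1$ to write $2^{m/2}$ as a square root of $1$, but since $2^m-1$ need not be prime this needs the gcd language throughout rather than field arithmetic. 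The analogous manipulations handle $e=2^{(m+2)/2}-3$ and $e=2^{m/2}+2^{(m+2)/4}+1$, the last one being the most involved because $e$ itself is a three-term sum; there one writes $e-1=2^{(m+2)/4}(2^{(m-2)/4}+1)$ after noting $2^{m/2}=2^{(m+2)/4}\cdot 2^{(m-2)/4}$, reducing to gcds of $2^{(m-2)/4}+1$ and $2^{(m+2)/4}+1$ against $2^m-1$, which the congruence $m\equiv 2\pmod 8$ controls.

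The main obstacle I anticipate is purely the case analysis in the last three families: correctly computing $\gcd(a,m)$ for the various shifted exponents $a=(m\pm 2)/2$, $(m-6)/2$, $(m\pm 2)/4$ as a function of the residue of $m$ modulo $8$ or $24$, and confirming that the specific listed congruences are precisely (not just sufficiently) the ones that make every relevant gcd equal to the value needed to kill the common factor. There is no deep idea here—Lemma~\ref{lem-381} does all the structural work—but the arithmetic must be done carefully for each exponent and each residue class, and it is easy to make sign errors when reducing $e$ and $e-1$ modulo $2^m-1$ and pulling out units. A clean writeup would tabulate, for each $e$, the factorization of $e$ and of $e-1$ into (unit)$\times(2^a\pm 1)$ and then state the single gcd computation that finishes the case.
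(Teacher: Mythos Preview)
Your approach is exactly the paper's: the published proof consists of the single sentence ``In all the cases above, it can be verified that $\gcd(e(e-1), q-1)=1$. It then follows from Lemma~\ref{lem-381} that $x^e$ is a d-polynomial,'' and you are carrying out (in sketch) precisely that verification. Your use of the identity $\gcd(2^a-1,2^b-1)=2^{\gcd(a,b)}-1$ and the reduction of $e$ and $e-1$ to expressions of the form (unit)$\times(2^a\pm 1)$ is the natural way to do the bookkeeping the paper omits; the first case and the $e-1$ computation in the third case that you wrote out are correct, and the remaining cases yield to the same method.
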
 

\begin{proof}
In all the cases above, it can be verified that $\gcd(e(e-1), q-1)=1$. It then 
follows from Lemma \ref{lem-381} that $x^e$ is a d-polynomial. 
\end{proof}

All the d-monomials in Theorems \ref{thm-omonomials1} and \ref{thm-omonomials2} can 
be plugged into Theorem \ref{thm-main2design1} for obtaining $2$-$(q, k, \lambda)$ 
designs. But determining the parameters $k$ and $\lambda$ in the designs seems quite 
difficult. The reader is warmly invited to attack this problem.

\section{Designs from o-polynomials over $\gf(2^m)$}\label{sec-designoploly} 

Throughout this section $q=2^m$ for some positive integer $m$. The objective of this section 
is to construct 2-designs and $3$-designs from o-polynomials over $\gf(q)$. Since o-polynomials and hyperovals can be viewed as the same and hyperovals were used to construct two types of $2$-designs 
in the literature, we have to introduce hyperovals and their designs, so that we will be able 
to compare our newly constructed designs with hyperoval designs in the literature. 

\subsection{Hyperovals and their designs} 

An \emph{arc}\index{arc} in the projective plane $\PG(2, \gf(q))$ is a set of at least three 
points in $\PG(2, \gf(q))$ such that no three of them are collinear (i.e., on the same line). For any arc $\cA$ of $\PG(2, \gf(q))$, it is well known that $|\cA| \leq q+2$. 

A \emph{hyperoval} $\cH$ in $\PG(2,\gf(q))$ is a set of $q+2$ points such that no three of 
them are collinear, i.e., an arc with $q+2$ points. Hyperovals are maximal arcs, as they have the maximal number of points as arcs.
Two hyperovals  
are said to be \emph{equivalent} if there is a collineation (i.e., 
an automorphism) of $\PG(2, \gf(q))$ that sends one to the other. 
Note that the automorphism group of $\PG(2, \gf(q))$ is the  
projective general linear group $\PGL_3(\gf(q))$. 
The \emph{automorphism group}    
of a hyperoval is the set of all collineations of $\PG(2, \gf(q))$ that leave 
the hyperoval invariant.     

The 
next theorem shows that all hyperovals in $\PG(2,\gf(q))$ can be constructed with 
a special type of permutation polynomials of $\gf(q)$ \cite[p. 504]{LN97}. 

\begin{theorem}[Segre]\label{thm-hyperovaloply}
Let $m \geq 2$. Any hyperoval in $\PG(2, \gf(q))$ can be written in the form 
$$ 
\cH(f)=\{(f(c), c, 1): c \in \gf(q)\} \cup \{(1,0,0)\}  \cup \{(0,1,0)\},    
$$ 
where $f \in \gf(q)[x]$ is such that 
\begin{enumerate} 
\item $f$ is a permutation polynomial of $\gf(q)$ with $\deg(f)<q$ and $f(0)=0$, $f(1)=1$;  
\item for each $a \in \gf(q)$, $g_a(x)=(f(x+a)+f(a))x^{q-2}$ is also a permutation polynomial 
      of $\gf(q)$. 
\end{enumerate} 
Conversely, every such set $\cH(f)$ is a hyperoval. 
\end{theorem}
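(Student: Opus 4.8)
The plan is to prove both directions by translating the geometric condition ``no three points of $\cH(f)$ are collinear'' into the stated arithmetic conditions on $f$. First I would set up coordinates. Recall that a line in $\PG(2,\gf(q))$ is the zero set of a nonzero linear form $[\alpha : \beta : \gamma]$, i.e.\ the set of points $(x_0 : x_1 : x_2)$ with $\alpha x_0 + \beta x_1 + \gamma x_2 = 0$. Three points are collinear precisely when the $3\times 3$ matrix whose rows are their coordinate vectors is singular, i.e.\ has determinant $0$. The candidate set $\cH(f)$ has $q+2$ points: the two ``points at infinity'' $(1,0,0)$, $(0,1,0)$, and the $q$ affine points $(f(c),c,1)$ for $c \in \gf(q)$. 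So the whole argument is a case analysis over how many of the three chosen points lie at infinity.

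The key steps, in order. \emph{Step 1: the two special points and one affine point.} Three points among $\{(1,0,0),(0,1,0),(f(c),c,1)\}$: since the first two are distinct, collinearity with a third forces the third onto the line $x_2 = 0$, which no affine point satisfies; so no degeneracy here regardless of $f$. \emph{Step 2: one special point and two affine points.} Take $(1,0,0)$ together with $(f(a),a,1)$ and $(f(b),b,1)$, $a \ne b$. The determinant condition reduces to $a - b = 0$, impossible; so $(1,0,0)$ is never on a secant joining two distinct affine points — again automatic. Now take $(0,1,0)$ with $(f(a),a,1)$ and $(f(b),b,1)$: the determinant reduces to $f(a) - f(b) = 0$, so we need $f(a) \ne f(b)$ whenever $a \ne b$, i.e.\ \emph{$f$ must be a permutation of $\gf(q)$}. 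This yields condition (1) (the normalizations $f(0)=0$, $f(1)=1$ and $\deg f < q$ come from choosing a representative of the hyperoval in its $\PGL_3$-orbit — one uses the affine maps on coordinates to move three chosen points to standard position, and reduction mod $x^q - x$ to bound the degree). \emph{Step 3: three affine points.} Take $(f(a),a,1),(f(b),b,1),(f(c),c,1)$ with $a,b,c$ distinct. Expanding the determinant along the last column gives a ``generalized Vandermonde'' expression; dividing out the nonzero Vandermonde factor $(a-b)(b-c)(c-a)$, collinearity is equivalent to
\begin{eqnarray*}
\frac{f(a)+f(b)}{a+b} = \frac{f(a)+f(c)}{a+c}
\end{eqnarray*}
(using characteristic $2$, so signs vanish). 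Setting $a$ fixed and substituting $x = a+b$, $y = a+c$, this says the map $t \mapsto \bigl(f(t+a)+f(a)\bigr)/t$ is injective on $\gf(q)^*$. Writing division by $t$ as multiplication by $t^{q-2}$ and checking the value at $t=0$ separately, this is exactly the statement that $g_a(x) = (f(x+a)+f(a))x^{q-2}$ is a permutation polynomial of $\gf(q)$ — condition (2). Running Steps 1--3 in reverse shows that conditions (1)--(2) force $\cH(f)$ to be an arc of size $q+2$, hence a hyperoval; this gives the converse.

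I expect the main obstacle to be \emph{bookkeeping rather than ideas}: carefully handling the $x^{q-2}$ (which equals $x^{-1}$ for $x \ne 0$ and $0$ for $x = 0$) so that ``$g_a$ is a permutation'' faithfully captures injectivity of the difference-quotient map \emph{including} its behavior at $0$ and its value $f'(a)$-type term, and verifying that the normalizations in condition (1) really can be achieved by an element of $\PGL_3(\gf(q))$ without loss of generality. Everything else is a routine $3\times 3$ determinant computation in characteristic $2$. Since this is a classical result of Segre, I would present the determinant reductions compactly and cite \cite[p. 504]{LN97} for the details of the normalization step.
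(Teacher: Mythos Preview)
The paper does not prove this theorem: it is stated as a classical result of Segre and attributed to \cite[p.~504]{LN97} without argument. Your sketch is the standard proof---the case split on how many of the three points lie at infinity, reducing condition (1) to injectivity of $f$ via the point $(0,1,0)$ and condition (2) to injectivity of the difference quotient $t\mapsto (f(t+a)+f(a))t^{q-2}$ via three affine points---and it is correct, with the caveats you already flag about the value of $g_a$ at $0$ and the $\PGL_3$ normalization. Since the paper merely cites the result, there is nothing further to compare.
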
  

Polynomials satisfying the two conditions of Theorem \ref{thm-hyperovaloply} are 
called \emph{o-polynomials}\index{o-polynomial}, i.e., oval-polynomials. For example, 
$f(x)=x^2$ is an o-polynomial over $\gf(q)$ for all $m \geq 2$. In the next section, we will 
summarize known o-polynomials over $\gf(q)$. 

Two o-monomials $f$ and $g$ are said to be equivalent if the two hyperovals $\cH(f)$ 
and $\cH(g)$ are equivalent. The following result was presented in \cite{Xiang}. 

\begin{lemma}\label{lem-GEHKX} 
Let $q \geq 4$. Two monomial hyperovals $\cH(x^j)$ and $\cH(x^e)$ 
in $\PG(2, \gf(q))$ are equivalent if and only if $i \equiv e, 1/e, 1-e, 1/(1-e), 
e/(e-1) \mbox{ or } (e-1)/e \pmod{q-1}$.  
\end{lemma}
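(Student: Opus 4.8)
The plan is to prove the two implications separately, the ``if'' direction amounting to exhibiting explicit collineations and the ``only if'' direction being the substantive part.

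\emph{The ``if'' direction.} I would first record two operations on o-polynomials, each realised by a collineation of $\PG(2,\gf(q))$, and then track their effect on a monomial. Recall that an o-polynomial $f$ is a permutation of $\gf(q)$ with $f(0)=0$ and $f(1)=1$. Operation $A$ sends $f$ to its compositional inverse $f^{-1}$: the collineation $(x,y,z)\mapsto(y,x,z)$ carries $\cH(f)$ onto $\cH(f^{-1})$, as one sees by rewriting a point $(f(c),c,1)$ as $(c,f(c),1)$ and reparametrising $d=f(c)$. Operation $B$ sends $f$ to $\widehat f$, where $\widehat f(x)=x\,f(x^{q-2})$ (so $\widehat f(0)=0$ and $\widehat f(1)=1$): the collineation $(x,y,z)\mapsto(x,z,y)$ carries $\cH(f)$ onto $\cH(\widehat f)$, by reparametrising $d=1/c$ on the points with $c\neq0$. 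Both $A$ and $B$ are involutions and $AB$ has order $3$, so $\langle A,B\rangle\cong\Sym(3)$ and, as collineations, $A$ and $B$ generate the group of the six coordinate permutations. On a monomial $f=x^e$ (where necessarily $\gcd(e,q-1)=1$) these operations act on the exponent by $e\mapsto 1/e$ and $e\mapsto 1-e$ modulo $q-1$, so the $\Sym(3)$-orbit of $e$ is exactly $\{e,1/e,1-e,1/(1-e),e/(e-1),(e-1)/e\}$. Hence if $i$ is congruent modulo $q-1$ to one of these six values, $\cH(x^i)$ and $\cH(x^e)$ are equivalent.

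\emph{The ``only if'' direction: reduction.} Suppose $\varphi\in\PGL_3(\gf(q))$ satisfies $\varphi(\cH(x^e))=\cH(x^i)$. Put $P_1=(1,0,0)$, $P_2=(0,1,0)$, $P_3=(0,0,1)$, all of which lie on $\cH(x^e)$ ($P_3$ being the point with $c=0$). The diagonal collineations $\tau_\lambda:(x,y,z)\mapsto(\lambda^e x,\lambda y,z)$, $\lambda\in\gf(q)^*$, form a cyclic group $T_e$ of order $q-1$ inside $\Stab_{\PGL_3(\gf(q))}(\cH(x^e))$, and since $x^e$ is an o-monomial ($e\not\equiv 0,1\pmod{q-1}$) the only fixed points of $T_e$ are $P_1,P_2,P_3$. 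Thus $\varphi T_e\varphi^{-1}$ is a cyclic subgroup of order $q-1$ of $\Stab_{\PGL_3(\gf(q))}(\cH(x^i))$ whose fixed points are exactly $\varphi(P_1),\varphi(P_2),\varphi(P_3)$, a triangle lying on $\cH(x^i)$. The key input is that this triangle is carried onto $\{P_1,P_2,P_3\}$ by some $\psi\in\Stab_{\PGL_3(\gf(q))}(\cH(x^i))$; replacing $\varphi$ by $\psi\varphi$ (which still maps $\cH(x^e)$ onto $\cH(x^i)$), I may assume $\varphi$ permutes $\{P_1,P_2,P_3\}$. A collineation stabilising the coordinate triangle is induced by a monomial matrix, so $\varphi=\pi D$ with $\pi$ a coordinate permutation and $D=\mathrm{diag}(a,b,c)$.

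\emph{The ``only if'' direction: conclusion.} Applying the computation of the ``if'' direction to the coordinate permutation $\pi^{-1}$ gives $\pi^{-1}(\cH(x^i))=\cH(x^{i'})$ for some $i'$ in the $\Sym(3)$-orbit of $i$, whence $D(\cH(x^e))=\cH(x^{i'})$. A direct calculation yields $D(\cH(x^e))=\{(\gamma s^e,s,1):s\in\gf(q)\}\cup\{P_1,P_2\}$ with $\gamma=ac^{e-1}/b^e$; comparing the point at $s=1$, namely $(\gamma,1,1)$, with the point at $u=1$ of $\cH(x^{i'})=\{(u^{i'},u,1):u\in\gf(q)\}\cup\{P_1,P_2\}$, namely $(1,1,1)$, forces $\gamma=1$. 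Therefore $\cH(x^e)=\cH(x^{i'})$, so $t^e=t^{i'}$ for all $t\in\gf(q)^*$ and hence $e\equiv i'\pmod{q-1}$. Since the $\Sym(3)$-orbit is an equivalence class, $i$ then lies in the orbit of $e$, i.e. $i\equiv e,1/e,1-e,1/(1-e),e/(e-1)$ or $(e-1)/e\pmod{q-1}$, as required.

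\emph{Main obstacle.} The delicate step is the ``key input'' in the reduction: that the fixed triangle of any cyclic subgroup of order $q-1$ of $\Stab_{\PGL_3(\gf(q))}(\cH(x^i))$ is equivalent, under that stabiliser, to the coordinate triangle. This needs the structure of $\Stab_{\PGL_3(\gf(q))}(\cH(x^i))$ with some precision: for a non-regular monomial hyperoval this group is generated by $T_i$ together with the Frobenius collineation, in which $T_i$ is essentially the only cyclic subgroup of order $q-1$, while the regular hyperoval $\cH(x^2)$ (with stabiliser $\cong\PGL_2(\gf(q))$) has to be treated separately, where the claim reduces to the classical fact that the regular hyperoval is unique up to equivalence with exponent orbit $\{2,q/2,q-2\}$. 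The remaining computations are routine reparametrisations.
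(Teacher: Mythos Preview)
The paper does not prove this lemma at all: it is stated with the attribution ``The following result was presented in \cite{Xiang}'' and no argument is given. So there is no in-paper proof to compare your attempt against; your write-up stands on its own.

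Your ``if'' direction is correct and is the standard argument. The two coordinate transpositions $(x,y,z)\mapsto(y,x,z)$ and $(x,y,z)\mapsto(x,z,y)$ realise the operations $e\mapsto 1/e$ and $e\mapsto 1-e$ on exponents, and the $\Sym(3)$ they generate produces exactly the six listed residues. (A small notational collision: you later use $c$ both for the third diagonal entry of $D$ and implicitly as the affine parameter of $\cH$; the computation is nonetheless right, yielding $\gamma=\alpha\gamma_3^{\,e-1}/\beta^{e}$ in cleaner notation.)

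For the ``only if'' direction you have correctly located the real cost. Your reduction works once you know that the only order-$(q-1)$ cyclic subgroup of $\Stab_{\PGL_3}(\cH(x^i))$ is $T_i$ in the non-regular case, so that $\varphi T_e\varphi^{-1}=T_i$ and hence $\varphi$ already permutes $\{P_1,P_2,P_3\}$ (no auxiliary $\psi$ is needed there). But the assertion that $\Stab_{\PGL_3}(\cH(x^i))=T_i$ for non-regular monomial hyperovals is itself a substantial structural theorem from the hyperoval literature, and one must check that its proof does not in turn rely on the lemma you are proving. Likewise, your handling of the regular case appeals to the classical uniqueness of the conic hyperoval with exponent orbit $\{2,\,q/2,\,q-2\}$, which is again an external input of comparable depth. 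So your outline is sound and honest about its dependencies, but it is a reduction to known stabiliser results rather than a self-contained proof; since the paper only cites \cite{Xiang}, that is consistent with how the result is treated here.
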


Any hyperoval $\cH$ in $\PG(2, \gf(q))$ meets each line either in $0$ or $2$ points. 
A line is called an interior line (also called secant) of $\cH$ if it meets the hyperoval in two points, and 
an exterior line otherwise. 
Hence, a hyperoval partitions the lines of      
$\PG(2, \gf(q))$ into two classes, i.e., interior and exterior lines. This property allows us to define the so-called 
hyperoval designs as follows. 
 
Let $\cH$ be a hyperoval in the Desarguesian projective plane $\PG(2, \gf(q))$. The \emph{hyperoval 
design}\index{hyperoval design} $\cW(q, \cH)$ is the incidence structure with points the 
lines of $\PG(2, \gf(q))$ exterior to $\cH$ and blocks the points of $\PG(2, \gf(q))$ not 
on the hyperoval; incidence is given by the incidence in $\PG(2, \gf(q))$. We have then the following 
conclusion on the incidence structure $\cW(q, \cH)$. 

\begin{theorem}[\cite{AK92}]\label{thm-hpodesign1}
The incidence structure $\cW(q, \cH)$ defined by a hyperoval $\cH$ in $\PG(2, \gf(q))$ 
is a $2$-$((q-1)q/2, q/2, 1)$ design, i.e., a Steiner system\index{Steiner system}.  
\end{theorem}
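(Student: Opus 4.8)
The plan is to count incidences directly, exploiting the fact that a hyperoval $\cH$ in $\PG(2,\gf(q))$ with $q=2^m$ partitions the $q^2+q+1$ lines into secants (meeting $\cH$ in exactly $2$ points) and exterior lines (meeting $\cH$ in $0$ points), and likewise partitions the $q^2+q+1$ points into those on $\cH$ and those off $\cH$. First I would record the basic numerology: $|\cH|=q+2$, so the number of secants is $\binom{q+2}{2}=(q+2)(q+1)/2$, and hence the number of exterior lines is
\[
v=(q^2+q+1)-\frac{(q+2)(q+1)}{2}=\frac{q(q-1)}{2}.
\]
These exterior lines are the points of $\cW(q,\cH)$. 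The blocks are the points of $\PG(2,\gf(q))$ not on $\cH$, of which there are $b=(q^2+q+1)-(q+2)=q^2-1=(q-1)(q+1)$.

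Next I would determine the block size $k$. A block corresponds to a point $P\notin\cH$; the point set of that block is the set of exterior lines through $P$. Through $P$ there are exactly $q+1$ lines of $\PG(2,\gf(q))$. Of these, the secants through $P$ are in bijection with the $2$-subsets of $\cH$ collinear with $P$; since no three points of $\cH$ are collinear, each secant through $P$ uses up two distinct points of $\cH$, and these pairs are disjoint as the secant varies, so the number of secants through $P$ is $|\cH|/2=(q+2)/2=q/2+1$. (Here one uses that $P$ itself is not on $\cH$, so a secant through $P$ meets $\cH$ in two points neither of which is $P$, and every point of $\cH$ lies on exactly one secant through $P$ because the line $P$-to-that-point is a secant.) Therefore the number of exterior lines through $P$ is $(q+1)-(q/2+1)=q/2$, giving $k=q/2$.

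Finally I would verify $\lambda=1$, i.e., that any two distinct exterior lines $\ell_1,\ell_2$ are incident with exactly one common block. Two distinct lines in $\PG(2,\gf(q))$ meet in exactly one point $Q$; it suffices to check $Q\notin\cH$. If $Q$ were on $\cH$, then each of $\ell_1,\ell_2$ would be a line through a point of $\cH$ and would therefore meet $\cH$ in at least one point, contradicting that $\ell_1,\ell_2$ are exterior (meet $\cH$ in $0$ points). Hence $Q$ is a block containing both points $\ell_1,\ell_2$, and it is the unique such block since it is the unique common point. Thus $\cW(q,\cH)$ is a $2$-$(v,k,1)$ design with $v=q(q-1)/2$ and $k=q/2$, as claimed; a consistency check via $\lambda\binom{v}{2}=b\binom{k}{2}$ (equivalently $b=\lambda v(v-1)/(k(k-1))$) confirms the value of $b$ computed above.

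I do not anticipate a serious obstacle here: the only point requiring care is the counting of secants through an off-hyperoval point $P$, where one must correctly invoke the no-three-collinear property to see that the secants through $P$ partition $\cH$ into pairs; everything else is elementary incidence counting in $\PG(2,\gf(q))$ together with the even characteristic fact that every line meets a hyperoval in $0$ or $2$ points. (This last fact is stated in the excerpt and may be assumed.)
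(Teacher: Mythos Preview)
Your argument is correct. The paper itself does not supply a proof of this theorem; it is quoted from \cite{AK92} as a known result, so there is no ``paper's proof'' to compare against. Your direct incidence count is precisely the standard verification: compute $v$ by subtracting the $\binom{q+2}{2}$ secants from the $q^2+q+1$ lines, compute $k$ by showing the secants through an exterior point $P$ partition $\cH$ into pairs, and obtain $\lambda=1$ from the fact that two distinct lines of $\PG(2,\gf(q))$ meet in a unique point which, lying on an exterior line, cannot belong to $\cH$.

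One cosmetic remark: in the $\lambda=1$ step your contradiction argument is a slight detour. It is enough to say that $Q\in\ell_1$ and $\ell_1\cap\cH=\emptyset$ force $Q\notin\cH$ directly; no separate contradiction is needed. This does not affect correctness.
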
 

The second type of $2$-designs from hyperovals are constructed as follows. 
Let $\cH$ be a hyperoval in $\PG(2, \gf(q))$. Let $\cP$ 
be the set of $q^2-1$ exterior points to $\cH$, i.e., the set of points in $\PG(2, \gf(q)) \setminus 
\cH$. For each point $x \in \cP$, define a block 
$$ 
B_x=\{y \in \cP\setminus \{x\}: xy \mbox{ is a secant to } \cH\} \cup \{x\}.  
$$ 
Define further $\cB=\{B_x: x \in \cP\}$. We have then the following conclusion.  

\begin{theorem}[\cite{AK92,Jackson, Masch98, Pott}]\label{thm-hpodesign2}
The incidence structure $\cS(q, \cH):=(\cP, \cB)$ is a symmetric 
$2$-$(q^2-1, \frac{1}{2}q^2-1, \frac{1}{4}q^2-1)$ design. 
\end{theorem}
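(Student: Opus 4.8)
\textbf{Proof proposal for Theorem \ref{thm-hpodesign2}.}

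The plan is to compute the relevant counting parameters directly inside $\PG(2,\gf(q))$ and then invoke the standard fact that an incidence structure in which every point lies in $r$ blocks, every block has size $r$, and every pair of distinct points lies in a constant number $\lambda$ of common blocks, is a symmetric $2$-design. First I would fix the hyperoval $\cH$ and recall the line-counting data: $\PG(2,\gf(q))$ has $q^2+q+1$ points and $q^2+q+1$ lines, each line has $q+1$ points, each point lies on $q+1$ lines, and $\cH$ consists of $q+2$ points with every line meeting $\cH$ in $0$ or $2$ points. Consequently $|\cP|=(q^2+q+1)-(q+2)=q^2-1$, which fixes $v=q^2-1$. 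The number of secants is $\binom{q+2}{2}=(q+2)(q+1)/2$, and the number of exterior lines is $(q^2+q+1)-(q+2)(q+1)/2 = q(q-1)/2$.

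Next I would compute the block size. Fix $x\in\cP$. The $q+1$ lines through $x$ partition $\PG(2,\gf(q))\setminus\{x\}$, and among these lines exactly $(q+2)/2$ are secants: indeed each of the $(q+2)/2$ points of $\cH$ that pair up under secants through $x$ — more carefully, the $q+2$ points of $\cH$ are distributed among the $q+1$ lines through $x$ with each secant carrying two of them and each exterior-through-$x$ line carrying none, so the number of secants through $x$ is $(q+2)/2$ and the number of exterior lines through $x$ is $(q+1)-(q+2)/2=q/2$. A point $y\in\cP\setminus\{x\}$ lies in $B_x$ exactly when the line $xy$ is a secant; each secant through $x$ contains $q+1$ points of $\PG(2,\gf(q))$, of which $2$ lie on $\cH$, leaving $q-1$ exterior points on it, one of which is $x$ itself. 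Hence $|B_x\setminus\{x\}| = \frac{q+2}{2}\,(q-2)$, and therefore $|B_x| = \frac{(q+2)(q-2)}{2}+1 = \frac{q^2-4}{2}+1 = \frac{q^2-2}{2} = \frac12 q^2-1$, which gives $k=\tfrac12 q^2-1$. Note $b=|\cP|=q^2-1=v$, so the structure is symmetric by construction; this also forces each point to lie in exactly $k$ blocks once we know it is a $2$-design (by the standard symmetric-design parameter relations), but I would verify $r=k$ directly as a sanity check: $y\in B_x \iff x\in B_y$ since both conditions say $xy$ is a secant (for $x\neq y$), and $x\in B_x$ always, so the incidence relation is symmetric and hence $r=k$.

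The remaining and genuinely substantive step is to show that every pair of distinct points $x,y\in\cP$ lies in the same number $\lambda$ of blocks, and to evaluate $\lambda=\tfrac14 q^2-1$. A point $z$ lies in $B_x\cap B_y$ iff both $xz$ and $yz$ are secants (with the boundary cases $z=x$ or $z=y$ handled by: $z=x$ contributes iff $xy$ is a secant, similarly $z=y$). So I must count, for a fixed pair $x,y$, the number of $z\in\cP\setminus\{x,y\}$ with $xz$ and $yz$ both secant, and this count must come out independent of whether $xy$ is itself secant or exterior. Here is where the argument needs care: one partitions the relevant points according to which secant through $x$ and which secant through $y$ they lie on, using that two distinct lines meet in exactly one point; the total is $\sum$ over pairs (secant through $x$, secant through $y$) of the indicator that their intersection point lies in $\cP$, minus corrections for the lines $xz,yz$ coinciding and for $z\in\{x,y\}$. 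Carrying this out, the count of common secant-neighbours works out to $\tfrac14 q^2 - 1$ plus the appropriate adjustment so that in every case $|B_x\cap B_y|=\tfrac14 q^2-1$; the cleanest route is probably to use the hyperoval design $\cW(q,\cH)$ of Theorem \ref{thm-hpodesign1} together with a complementation/residual argument, or else a direct double-count of flags $(z,\{xz,yz\})$. I expect this constancy-of-$\lambda$ computation, especially the uniform treatment of the secant-versus-exterior case for the line $xy$, to be the main obstacle; everything else is routine line-counting in $\PG(2,\gf(q))$. Once $\lambda$ is shown constant, the theorem follows since an incidence structure with $v=b$ on which every $2$-subset of points lies in a constant number of blocks is a symmetric $2$-$(v,k,\lambda)$ design, and I have identified $v=q^2-1$, $k=\tfrac12 q^2-1$, $\lambda=\tfrac14 q^2-1$. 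I would also cite \cite{AK92,Jackson,Masch98,Pott} for the portions already established there.
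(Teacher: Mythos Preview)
The paper does not give its own proof of this theorem: it is stated with citations to \cite{AK92,Jackson,Masch98,Pott} and no argument is supplied, so there is nothing in the paper to compare your approach against. Your outline is the standard direct one and all the preliminary counts ($v=q^2-1$, $k=\tfrac12 q^2-1$, the symmetry $y\in B_x\Leftrightarrow x\in B_y$, hence $r=k$ and $b=v$) are correct.

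The only real content, as you yourself flag, is the constancy of $\lambda$, and you stop just short of doing it. This is not a wrong approach, merely an unfinished one, and it can be completed exactly along the case split you indicate. Fix distinct $x,y\in\cP$ and set $N=|\{z\in\cP\setminus\{x,y\}:xz,\ yz\text{ both secant}\}|$; then the number of blocks through $\{x,y\}$ equals $N+2[xy\text{ secant}]$. For each secant $\ell$ through $x$ with $\ell\neq xy$, the $q$ points of $\ell\setminus\{x\}$ are hit bijectively by the $q$ lines through $y$ other than $xy$; exactly two of those lines (namely $ya,yb$ for the two hyperoval points $a,b$ on $\ell$) meet $\ell$ in $\cH$, and the rest meet $\ell$ in $\cP\setminus\{x,y\}$. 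If $xy$ is a secant, there are $q/2$ secants $\ell\neq xy$ through $x$ and $q/2$ secants through $y$ other than $xy$, giving $q/2-2$ good $z$ on each such $\ell$; together with the $q-3$ points of $\cP\setminus\{x,y\}$ on $xy$ itself one gets $N=(q/2)(q/2-2)+(q-3)=q^2/4-3$, hence $\lambda=N+2=q^2/4-1$. If $xy$ is exterior, there are $(q+2)/2$ secants through $x$ and $(q+2)/2$ through $y$, none equal to $xy$, giving $(q+2)/2-2=q/2-1$ good $z$ on each $\ell$, so $N=\frac{q+2}{2}\cdot\frac{q-2}{2}=q^2/4-1$ and again $\lambda=q^2/4-1$. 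That finishes the proof; note also that $\lambda<k$ then forces the blocks $B_x$ to be pairwise distinct, justifying $b=q^2-1$.
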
 

It is known that the Hadamard design $\cS(q, \cH)$ can be extended into a $3$-$(q^2, \frac{1}{2}q^2, \frac{1}{4}q^2-1)$ design, denoted by $\cS(q, \cH)^e$ \cite{AK92}.

\subsection{Known o-polynomials over $\gf(2^m)$}

Recall that $q=2^m$. 
To construct $2$-designs and $3$-designs subsequently, we need o-polynomials over $\gf(q)$. The objective of this section 
is to summarise known constructions of o-polynomials over $\gf(q)$ and consequently hyperovals in 
$\PG(2, \gf(q))$. 

In the definition of o-polynomials, it is required that $f(1)=1$. However, this is not essential, as 
one can always normalise $f(x)$ by using $f(1)^{-1} f(x)$ due to the fact that $f(1) \neq 0$. In this section, 
we do not require that $f(1)=1$ for o-polynomials.  

For any permutation polynomial $f(x)$ over $\gf(q)$, we define $\overline{f}(x)=xf(x^{q-2})$, and use $f^{-1}$ to denote the compositional inverse of $f$, i.e., $f^{-1}(f(x))=x$ for all $x \in \gf(q)$.

The following two theorems introduce basic properties of o-polynomials whose proofs can be 
found in references about hyperovals. 

\begin{theorem}\label{thm-basicproperty}
Let $f$ be an o-polynomial over $\gf(q)$. Then the following statements hold: 
\begin{itemize}
\item $f^{-1}$ is also an o-polynomial; 
\item $f(x^{2^{j}})^{2^{m-j}}$ is also an o-polynomial for any $1 \leq j \leq m-1$; 
\item $\overline{f}$ is also an o-polynomial; and  
\item $f(x+1)+f(1)$ is also an o-polynomial. 
\end{itemize}
\end{theorem}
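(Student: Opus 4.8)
The plan is to verify each of the four closure properties directly against the two defining conditions of an o-polynomial in Theorem~\ref{thm-hyperovaloply}, namely that $f$ is a permutation polynomial fixing $0$ (and, up to normalisation, $1$) and that $g_a(x)=(f(x+a)+f(a))x^{q-2}$ is a permutation polynomial for every $a\in\gf(q)$. The cleanest way to organise this is to pass to the geometric picture: $f$ is an o-polynomial if and only if the point set $\cH(f)=\{(f(c),c,1):c\in\gf(q)\}\cup\{(1,0,0),(0,1,0)\}$ is a hyperoval, so it suffices to exhibit, for each of the four operations on $f$, a collineation of $\PG(2,\gf(q))$ carrying $\cH(f)$ to $\cH(g)$ where $g$ is the transformed polynomial. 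Since $\PGL_3(\gf(q))$ and the Frobenius automorphism act on arcs and send hyperovals to hyperovals, producing such a collineation immediately shows $g$ is an o-polynomial.

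First I would handle $\overline{f}(x)=xf(x^{q-2})$: the map $(x_0,x_1,x_2)\mapsto(x_1,x_0,x_2)$ swapping the first two coordinates is a collineation, and applying it to $\cH(f)$ interchanges the roles of $f(c)$ and $c$; after rescaling each representative vector $(c,f(c),1)$ by $f(c)^{q-2}$ (legitimate projectively when $c\neq0$, and the points $(1,0,0),(0,1,0)$ are simply permuted), one recovers exactly $\cH(\overline f)$, noting $x^{q-2}$ is the inverse function of $x$ on $\gf(q)^*$ and $0^{q-2}=0$. Next, $f(x+1)+f(1)$: the map sending $(x_0,x_1,x_2)\mapsto(x_0+f(1)x_2,\,x_1+x_2,\,x_2)$ is in $\GA$-like fashion realised by an element of $\PGL_3(\gf(q))$ fixing the line at infinity, and it translates the affine coordinate $c$ by $1$ and the first coordinate by $f(1)$, carrying $\cH(f)$ onto $\cH(f(x+1)+f(1))$. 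For $f(x^{2^j})^{2^{m-j}}$, I would apply the field automorphism $x\mapsto x^{2^{m-j}}$ coordinatewise to $\PG(2,\gf(q))$: this is a collineation, and it sends $(f(c),c,1)$ to $(f(c)^{2^{m-j}},c^{2^{m-j}},1)$; reindexing $c'=c^{2^{m-j}}$, i.e.\ $c=(c')^{2^j}$, the first coordinate becomes $f((c')^{2^j})^{2^{m-j}}$, which is the claimed polynomial evaluated at $c'$.

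Finally, $f^{-1}$: here I would use that the hyperoval $\cH(f)$ and $\cH(f^{-1})$ are related by the coordinate swap $(x_0,x_1,x_2)\mapsto(x_1,x_0,x_2)$ composed with the observation that $\{(f(c),c,1):c\in\gf(q)\}=\{(d,f^{-1}(d),1):d\in\gf(q)\}$ after the substitution $d=f(c)$, which is a bijection of $\gf(q)$ since $f$ is a permutation; the two infinite points are again just swapped. The main obstacle I anticipate is purely bookkeeping rather than conceptual: one must check in each case that the projective rescaling of representative vectors is well-defined (the offending coordinate is nonzero exactly where needed) and that the two points $(1,0,0)$ and $(0,1,0)$ land back inside the target hyperoval rather than on some affine point. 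Once the dictionary ``collineation of $\PG(2,\gf(q))$ $\leftrightarrow$ transformation of $f$'' is set up carefully, each of the four bullets is a one-line verification, and no genuinely hard estimate or structural argument is required — which is why the statement is quoted as a known basic property with a reference to the hyperoval literature.
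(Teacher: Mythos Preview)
The paper does not actually prove this theorem: it states that the proofs ``can be found in references about hyperovals'' and moves on. Your geometric strategy --- realise each operation on $f$ as a collineation of $\PG(2,\gf(q))$ sending $\cH(f)$ to $\cH(g)$ --- is exactly the standard argument found in that literature, and three of your four cases are handled correctly.

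There is, however, a concrete slip in the $\overline{f}$ case. You propose the swap $(x_0,x_1,x_2)\mapsto(x_1,x_0,x_2)$ for $\overline{f}$, and then later you use the \emph{same} swap for $f^{-1}$. Both cannot be right, since $\overline{f}\neq f^{-1}$ in general. In fact your computation for $f^{-1}$ is the correct one: swapping the first two coordinates sends $(f(c),c,1)$ to $(c,f(c),1)=(f^{-1}(d),d,1)$ with $d=f(c)$, yielding $\cH(f^{-1})$. The rescaling you attempt for $\overline{f}$ does not close up: multiplying $(c,f(c),1)$ by $f(c)^{q-2}$ gives $(cf(c)^{-1},1,f(c)^{-1})$, whose last coordinate is not $1$, and there is no reparametrisation that turns this into $(\overline{f}(c'),c',1)$.

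The fix is to swap the \emph{last} two coordinates instead: $(x_0,x_1,x_2)\mapsto(x_0,x_2,x_1)$ sends $(f(c),c,1)$ to $(f(c),1,c)$, which for $c\neq 0$ normalises to $(c^{-1}f(c),\,c^{-1},\,1)$. Setting $c'=c^{-1}=c^{q-2}$, the first coordinate is $c'f((c')^{q-2})=\overline{f}(c')$, as required; the points with $c=0$, $(1,0,0)$, and $(0,1,0)$ are permuted among $\{(0,1,0),(1,0,0),(\overline{f}(0),0,1)\}$. With this correction your proposal is complete and matches the intended argument.
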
 

\begin{theorem}\label{thm-basicproperty2}
Let $x^e$ be an o-polynomial over $\gf(q)$. Then every polynomial in 
$$\left\{x^{\frac{1}{e}},\, x^{1-e},\,  x^{\frac{1}{1-e}},\, x^{\frac{e}{e-1}},\, x^{\frac{e-1}{e}}\right\}$$ 
is also an o-polynomial, where $1/e$ denotes the multiplicative inverse of $e$ modulo $q-1$. 
\end{theorem}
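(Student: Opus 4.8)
The plan is to derive Theorem~\ref{thm-basicproperty2} as a direct corollary of the last two closure properties in Theorem~\ref{thm-basicproperty} --- that $f^{-1}$ and $\overline{f}$ are o-polynomials whenever $f$ is --- by computing how each of these operations acts on the exponent of a monomial. First I would record that an o-polynomial is necessarily a permutation polynomial of $\gf(q)$, so that $\gcd(e,q-1)=1$ and hence $1/e$, the multiplicative inverse of $e$ modulo $q-1$, is meaningful; the same remark will apply to every exponent produced along the way, since each corresponding monomial will itself be an o-polynomial.

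Next I would carry out the two key exponent computations for a monomial $g(x)=x^a$ with $\gcd(a,q-1)=1$. On the one hand, the compositional inverse of $x\mapsto x^a$ on $\gf(q)$ is $x\mapsto x^{1/a}$, so $g^{-1}(x)=x^{1/a}$. On the other hand, $\overline{g}(x)=x\,g(x^{q-2})=x\cdot x^{a(q-2)}=x^{1+a(q-2)}$, and since $q\equiv 1\pmod{q-1}$ we get $1+a(q-2)\equiv 1-a\pmod{q-1}$; as a function on $\gf(q)$ this is the monomial $x^{1-a}$ (the value at $0$ being $0$ in both cases). Hence the maps $a\mapsto 1/a$ and $a\mapsto 1-a$ on exponents both carry o-monomials to o-monomials.

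Then it remains to run the short chain of applications. From $x^e$ I obtain $x^{1/e}=(x^e)^{-1}$ and $x^{1-e}=\overline{x^e}$; from $x^{1-e}$ I obtain $x^{1/(1-e)}=(x^{1-e})^{-1}$; from $x^{1/e}$ I obtain $x^{(e-1)/e}=x^{1-1/e}=\overline{x^{1/e}}$; and from $x^{(e-1)/e}$ I obtain $x^{e/(e-1)}=(x^{(e-1)/e})^{-1}$. These are exactly the five polynomials listed in the theorem. Conceptually, the involutions $a\mapsto 1/a$ and $a\mapsto 1-a$ generate the anharmonic group, isomorphic to $S_3$, and the orbit of $e$ under it is precisely $\{e,\,1/e,\,1-e,\,1/(1-e),\,e/(e-1),\,(e-1)/e\}$ --- the same six exponents that appear in the equivalence criterion of Lemma~\ref{lem-GEHKX}.

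Since the genuine content is already packaged in Theorem~\ref{thm-basicproperty}, there is no real obstacle here: the only point needing a little care is the exponent bookkeeping modulo $q-1$, in particular checking that $\overline{x^a}$ --- formally a polynomial of possibly large degree $1+a(q-2)$ --- agrees \emph{as a function} on $\gf(q)$ with the reduced monomial, and that each intermediate exponent is a unit modulo $q-1$ so that the subsequent inverse step is legitimate. Both are routine, so I expect the proof to be only a few lines.
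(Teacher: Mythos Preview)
Your proposal is correct. The paper itself does not supply a proof of Theorem~\ref{thm-basicproperty2}: it introduces Theorems~\ref{thm-basicproperty} and~\ref{thm-basicproperty2} together with the remark that their ``proofs can be found in references about hyperovals.'' Your derivation of Theorem~\ref{thm-basicproperty2} from the $f\mapsto f^{-1}$ and $f\mapsto\overline{f}$ closure properties in Theorem~\ref{thm-basicproperty}, via the exponent involutions $a\mapsto 1/a$ and $a\mapsto 1-a$, is exactly the standard argument and is clearly the one the paper has in mind by grouping the two statements. The bookkeeping you flag (reduction of $1+a(q-2)$ modulo $q-1$, and invertibility of each intermediate exponent because the corresponding monomial is itself an o-polynomial and hence a permutation) is handled correctly.
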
 

\begin{theorem}[\cite{Masch98}]\label{thm-opoly2to1}
A polynomial $f$ over $\gf(q)$ with $f(0)=0$ is an o-polynomial if and only if $f_u:=f(x)+ux$ 
is $2$-to-$1$ for every $u \in \gf(q)^*$. 
\end{theorem}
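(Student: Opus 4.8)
The plan is to recast the two defining conditions of an o-polynomial (Theorem \ref{thm-hyperovaloply}) in purely functional terms, and then to translate the statement that each $g_a$ is a permutation into a statement about the fibre sizes of the maps $f_u$. Following the convention adopted in this section (we drop $f(1)=1$ and treat $f$ as a function on $\gf(q)$), $f$ is an o-polynomial exactly when $f(0)=0$, $f$ is a permutation of $\gf(q)$, and for every $a\in\gf(q)$ the map $g_a(x)=(f(x+a)+f(a))x^{q-2}$ is a permutation of $\gf(q)$.

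The first key point is a substitution in characteristic $2$. For $x\neq 0$ we have $x^{q-2}=x^{-1}$, so for $u\in\gf(q)^*$ the equation $g_a(x)=u$ with $x\neq 0$ is equivalent to $f(x+a)+f(a)=ux$; putting $y=x+a$ this becomes $f(y)+uy=f(a)+ua$, that is $f_u(y)=f_u(a)$, with $x\neq 0$ corresponding to $y\neq a$. Thus the nonzero part of $g_a^{-1}(u)$ is in bijection with $\{y\neq a:\ f_u(y)=f_u(a)\}$, a set of cardinality $|f_u^{-1}(f_u(a))|-1$, while separately $g_a^{-1}(0)=\{x:\ f(x+a)=f(a)\}$ has cardinality $|f^{-1}(f(a))|$. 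Since $g_a$ maps the finite set $\gf(q)$ to itself, it is a permutation iff each of its fibres is a singleton, i.e.\ iff $|f^{-1}(f(a))|=1$ and $|f_u^{-1}(f_u(a))|=2$ for every $u\in\gf(q)^*$. Letting $a$ run over $\gf(q)$: the first family of equalities says precisely that $f$ is injective (hence a permutation), and the second says precisely that each $f_u$ with $u\neq 0$ is $2$-to-$1$. This already gives the equivalence ``$f$ is an o-polynomial $\iff$ $f(0)=0$, $f$ is a permutation, and $f_u$ is $2$-to-$1$ for all $u\in\gf(q)^*$''.

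To reach the statement as written, it remains to show that, once $f(0)=0$, the hypothesis that every $f_u$ $(u\neq 0)$ is $2$-to-$1$ already forces $f$ to be a permutation, so the extra clause is automatic. I would do this by a counting argument on unordered pairs. If $f_u$ is $2$-to-$1$ its fibres contribute exactly $q/2$ pairs $\{x,y\}$ with $(f(x)+f(y))/(x+y)=u$; for distinct $u$ these pair-sets are disjoint, because a pair $\{x,y\}$ (with $x+y\neq 0$, automatic in characteristic $2$) has a well-defined ``slope'' $(f(x)+f(y))/(x+y)$ and so lies in at most one of them. Ranging over the $q-1$ nonzero values of $u$ produces $(q-1)q/2=\binom{q}{2}$ distinct pairs, i.e.\ every unordered pair of elements of $\gf(q)$ occurs; in particular $(f(x)+f(y))/(x+y)\neq 0$ whenever $x\neq y$, whence $f(x)\neq f(y)$ and $f$ is injective. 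Combined with the equivalence of the previous paragraph (and Theorem \ref{thm-hyperovaloply} for the converse, the degree bound $\deg f<q$ being harmless since one may reduce $f$ modulo $x^q-x$ without changing the induced function), this proves both directions.

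I expect the counting step to be the only real obstacle: the passage from the $g_a$'s to the $f_u$'s is just bookkeeping with $x^{q-2}$ and a linear change of variable, but one has to notice that ``$f$ is a permutation'' is not part of the hypothesis in the statement and must be recovered from the balancedness of the family $\{f_u\}_{u\in\gf(q)^*}$, which the pair-counting does cleanly.
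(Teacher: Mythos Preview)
The paper does not prove Theorem~\ref{thm-opoly2to1}; it is quoted from Maschietti~\cite{Masch98} and used as a black box. So there is no ``paper's own proof'' to compare against.

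Your argument is correct. The translation from the permutation property of each $g_a$ to the fibre sizes of $f$ and of the $f_u$'s is clean and accurate: $g_a(0)=0$ gives $|g_a^{-1}(0)|=|f^{-1}(f(a))|$, and for $u\neq 0$ the substitution $y=x+a$ gives $|g_a^{-1}(u)|=|f_u^{-1}(f_u(a))|-1$, so the biconditional in your second paragraph holds. The pair-counting in the third paragraph is the right way to eliminate the ``$f$ is a permutation'' clause: each $2$-to-$1$ map $f_u$ contributes exactly $q/2$ unordered pairs with slope $u$, distinct $u$'s give disjoint pair-sets, and $(q-1)\cdot q/2=\binom{q}{2}$ exhausts all pairs, forcing $f(x)\neq f(y)$ whenever $x\neq y$. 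Your remark about reducing modulo $x^q-x$ to meet the degree condition is also fine, consistent with the paper's convention of treating polynomials as functions on $\gf(q)$.
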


Below we summarise some classes of o-polynomials over $\gf(q)$.  
The translation o-polynomials are described in the following theorem \cite{Segre57}. 

\begin{theorem}
$\Trans(x)=x^{2^h}$ is an o-polynomial over $\gf(q)$, where $\gcd(h, m)=1$.
\end{theorem}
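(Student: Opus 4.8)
The plan is to verify Segre's two defining conditions for an o-polynomial (Theorem \ref{thm-hyperovaloply}) directly for $f(x) = x^{2^h}$, the second condition being the only substantive one. First I would dispose of condition~(1): raising to the $2^h$-th power is the $h$-fold Frobenius map on $\gf(q) = \gf(2^m)$, hence a permutation of $\gf(q)$; one has $f(0) = 0$ and $f(1) = 1$; and, taking $1 \le h \le m-1$ (legitimate since $\gcd(h,m)=1$ and we may reduce $h$ modulo $m$ when $m \ge 2$, in which case $h\neq 0$), we get $\deg f = 2^h < 2^m = q$.

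For condition~(2), fix $a \in \gf(q)$ and compute, using characteristic $2$, $f(x+a) + f(a) = (x+a)^{2^h} + a^{2^h} = x^{2^h}$. Therefore $g_a(x) = (f(x+a)+f(a))x^{q-2} = x^{2^h} x^{q-2}$, which as a function on $\gf(q)$ sends $0$ to $0$ and each $x \ne 0$ to $x^{2^h-1}$; that is, $g_a$ coincides with the power function $x \mapsto x^{2^h-1}$ on all of $\gf(q)$. This function is a permutation of $\gf(q)$ if and only if $\gcd(2^h-1,\, q-1) = 1$.

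The key --- and only non-routine --- point is the classical identity $\gcd(2^h-1,\, 2^m-1) = 2^{\gcd(h,m)}-1$; with $\gcd(h,m)=1$ this gives $\gcd(2^h-1,\, 2^m-1) = 1$, so $g_a$ is a permutation for every $a$. Both conditions of Theorem \ref{thm-hyperovaloply} then hold, so $f(x) = x^{2^h}$ is an o-polynomial. Alternatively, one could invoke Maschietti's criterion (Theorem \ref{thm-opoly2to1}): writing $z = x+y$ one checks $f_u(x) = f_u(y) \iff z^{2^h} + uz = 0 \iff z = 0$ or $z^{2^h-1} = u$, and since $z \mapsto z^{2^h-1}$ is a bijection of $\gf(q)^*$ there is exactly one nonzero such $z$, making $f_u$ precisely $2$-to-$1$. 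I do not anticipate any genuine obstacle here; the gcd identity is standard.
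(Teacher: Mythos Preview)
Your argument is correct. Both routes you sketch---direct verification of Segre's two conditions in Theorem~\ref{thm-hyperovaloply}, and the alternative via the $2$-to-$1$ criterion of Theorem~\ref{thm-opoly2to1}---are valid, and the only non-formal ingredient, the identity $\gcd(2^h-1,\,2^m-1)=2^{\gcd(h,m)}-1$, is indeed the standard and appropriate tool here.

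As for comparison with the paper: the paper does not supply its own proof of this theorem at all. It is stated with a citation to Segre~\cite{Segre57} and treated as a known result from the hyperoval literature. So there is nothing to compare your approach against; you have simply filled in the classical argument that the paper chose to omit.
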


The following is a list of known properties of translation o-polynomials.  
\begin{itemize}
\item $\Trans^{-1}(x)=x^{2^{m-h}}$ and  
\item $\overline{\Trans}(x)= xf(x^{q-2})=x^{q-2^{m-h}}$. 
\end{itemize}

The following theorem describes a class of o-polynomials, which are called Segre o-polynomials \cite{Segre62,SegreBartocci}. 

\begin{theorem} 
Let $m$ be odd. Then 
$\Segre(x)=x^{6}$ is an o-polynomial over $\gf(q)$.
\end{theorem}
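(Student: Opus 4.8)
The plan is to apply the $2$-to-$1$ criterion of Theorem~\ref{thm-opoly2to1}. Since $f(x)=x^{6}$ has $f(0)=0$, it suffices to show that $f_{u}(x)=x^{6}+ux$ is $2$-to-$1$ on $\gf(q)$ for every $u\in\gf(q)^{*}$, i.e.\ that for each such $u$ and each $y\in\gf(q)$ there is exactly one $x\neq y$ with $f_{u}(x)=f_{u}(y)$. Starting from $x^{6}+y^{6}=u(x+y)$ and the characteristic-$2$ identity $x^{6}+y^{6}=(x+y)^{2}(x^{2}+xy+y^{2})^{2}$, cancelling $x+y\neq 0$ turns the collision equation into $(x+y)(x^{2}+xy+y^{2})^{2}=u$; writing $s=x+y$ this is $\psi_{y}(s)=u$ with $\psi_{y}(s)=s^{5}+y^{2}s^{3}+y^{4}s$. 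As $u$ runs over $\gf(q)^{*}$ and $\psi_{y}(0)=0$, the requirement is exactly that every $\psi_{y}$ be a permutation polynomial of $\gf(q)$. (One lands on the same reduction through Theorem~\ref{thm-hyperovaloply}, since $(f(x+a)+f(a))x^{q-2}$ agrees with $\psi_{a}(x)$ as a function on $\gf(q)$.)

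For $y=0$ this is clear: $\psi_{0}(s)=s^{5}$ and $\gcd(5,2^{m}-1)=1$ for odd $m$, because the order of $2$ modulo $5$ is $4$, which cannot divide an odd $m$. For $y\neq 0$, the scaling $\psi_{y}(yt)=y^{5}(t^{5}+t^{3}+t)$ reduces everything to the single statement that $g(t)=t^{5}+t^{3}+t=t(t^{2}+t+1)^{2}$ is a permutation polynomial of $\gf(2^{m})$ for odd $m$. I would establish injectivity of $g$: assume $g(t_{1})=g(t_{2})$ with $t_{1}\neq t_{2}$ and put $e_{1}=t_{1}+t_{2}\in\gf(q)^{*}$, $e_{2}=t_{1}t_{2}$. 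Writing the power sums $t_{1}^{i}+t_{2}^{i}$ in terms of $e_{1},e_{2}$ gives
\[
g(t_{1})+g(t_{2})=e_{1}\bigl(e_{2}^{2}+(e_{1}^{2}+1)e_{2}+(e_{1}^{4}+e_{1}^{2}+1)\bigr),
\]
so a collision forces $B:=e_{2}^{2}+(e_{1}^{2}+1)e_{2}+(e_{1}^{4}+e_{1}^{2}+1)=0$.

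The crux is to rule this out. Put $\beta=e_{1}^{2}+1=(e_{1}+1)^{2}$ and $\gamma=e_{1}^{4}+e_{1}^{2}+1=(e_{1}^{2}+e_{1}+1)^{2}$, so $B=e_{2}^{2}+\beta e_{2}+\gamma$ as a polynomial in $e_{2}$. If $e_{1}\neq 1$ then $\beta\neq 0$, and the substitution $e_{2}=\beta z$ together with Artin--Schreier shows that $e_{2}^{2}+\beta e_{2}+\gamma=0$ is solvable in $\gf(q)$ iff $\tr(\gamma/\beta^{2})=0$. Setting $c=e_{1}+1\neq 0$ one computes $\gamma/\beta^{2}=(1+c^{-1}+c^{-2})^{2}$, and then, using $\tr(z^{2})=\tr(z)$ repeatedly,
\[
\tr(\gamma/\beta^{2})=\tr(1+c^{-1}+c^{-2})=\tr(1)+\tr(c^{-1})+\tr(c^{-1})=\tr(1)=1,
\]
the last equality because $m$ is odd. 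Hence $B$ has no zero in $\gf(q)$, so $B\neq 0$. The residual case $e_{1}=1$ gives $\beta=0$ and $B=(e_{2}+1)^{2}$, which vanishes only at $e_{2}=1$; but then $t_{1},t_{2}$ would be the roots of $X^{2}+X+1$, which lie outside $\gf(2^{m})$ for odd $m$ (equivalently $\gcd(3,2^{m}-1)=1$).

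Therefore $g$ is injective, every $\psi_{y}$ is a permutation polynomial, and Theorem~\ref{thm-opoly2to1} gives that $x^{6}$ is an o-polynomial over $\gf(2^{m})$ for odd $m$. The one step that takes real work is the trace evaluation $\tr(\gamma/\beta^{2})=1$, which is where ``$m$ odd'' genuinely enters (through $\tr(1)=1$); everything else is routine characteristic-$2$ algebra, and the two coprimality facts $\gcd(5,2^{m}-1)=1$ and $\gcd(3,2^{m}-1)=1$ --- again consequences of $m$ being odd --- clear away the degenerate subcases.
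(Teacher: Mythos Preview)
Your proof is correct. The reduction via Theorem~\ref{thm-opoly2to1} (equivalently, condition~2 of Theorem~\ref{thm-hyperovaloply}) to showing that $\psi_{y}(s)=s^{5}+y^{2}s^{3}+y^{4}s$ permutes $\gf(q)$ is clean, the scaling to $g(t)=t(t^{2}+t+1)^{2}$ is right, and the Artin--Schreier trace computation $\tr(\gamma/\beta^{2})=\tr(1)=1$ is the heart of the matter and is carried out without error. The degenerate cases $y=0$ and $e_{1}=1$ are handled correctly via $\gcd(5,2^{m}-1)=\gcd(3,2^{m}-1)=1$ for odd $m$.

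As for comparison with the paper: the paper does not actually prove this theorem. It is stated as a classical result with citations to Segre~\cite{Segre62,SegreBartocci}, alongside the other families of o-polynomials (Glynn, Cherowitzo, Payne, Subiaco), all of which are quoted without proof. So there is no ``paper's own proof'' to compare against; you have supplied a self-contained argument where the paper relies on the literature. Your route --- reducing to the permutation property of the Dickson-type quintic $t^{5}+t^{3}+t$ and resolving it by a trace calculation --- is one of the standard ways to establish Segre's result, and it fits naturally with the paper's internal toolkit (Theorems~\ref{thm-hyperovaloply} and~\ref{thm-opoly2to1}).
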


For this o-monomial, we have the following.  
\begin{enumerate} 
\item $\overline{\Segre}(x)=x^{q-6}$. 
\item $\Segre^{-1}(x)=x^{\frac{5\times 2^{m-1}-2}{3}}$. 
\end{enumerate}

Glynn discovered two families of o-polynomials \cite{Glynn83}. The first is described as follows. 

\begin{theorem}
Let $m$ be odd. Then $\Glynnone(x)=x^{3\times 2^{(m+1)/2}+4}$ is an o-polynomial.
\end{theorem}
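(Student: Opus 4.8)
\textbf{Proof proposal for the statement $\Glynnone(x)=x^{3\times 2^{(m+1)/2}+4}$ is an o-polynomial over $\gf(q)$ for odd $m$.}

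The plan is to verify the two conditions of Theorem~\ref{thm-hyperovaloply} (after a harmless normalization, since Theorem~\ref{thm-basicproperty2} and the remark preceding it allow us to drop the $f(1)=1$ requirement). Write $e = 3\cdot 2^{(m+1)/2}+4$ and set $s = 2^{(m+1)/2}$, so $e = 3s+4$. First I would check that $x^e$ is a permutation polynomial of $\gf(q)$, i.e. $\gcd(e, q-1)=1$ with $q-1 = 2^m-1$. Since $s^2 = 2^{m+1} \equiv 2 \pmod{2^m-1}$, one computes $e = 3s+4$ and its square $e^2 = 9s^2 + 24 s + 16 \equiv 18 + 24s + 16 = 24 s + 34 \pmod{2^m-1}$; using such reductions one reduces $\gcd(e, 2^m-1)$ to a small gcd that can be handled by elementary number theory on the two residues. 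Then, by Theorem~\ref{thm-basicproperty2} (or directly by Theorem~\ref{thm-opoly2to1}), it suffices to show that $f_u(x) = x^e + ux$ is $2$-to-$1$ on $\gf(q)$ for every $u \in \gf(q)^*$; equivalently, for every $u$, the equation $x^e + y^e = u(x+y)$ with $x \ne y$ has, together with the trivial structure, exactly the right number of solutions, which (after the substitution $x \mapsto $ a suitable scaling absorbing $u$) reduces to studying $\dfrac{x^e+y^e}{x+y}$ taking each nonzero value exactly twice.

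The core of the argument is the substitution $t = x/(x+y)$ (valid since $x\ne y$, hence $x+y\ne 0$ in characteristic $2$), which turns the $2$-to-$1$ condition into the assertion that the rational map
\begin{equation}
P_e(t) = t^e + (1+t)^e
\end{equation}
is itself $2$-to-$1$ from $\gf(q)\setminus\{0,1\}$ onto its image, or more precisely that $P_e(t) = c$ has at most two solutions for each $c$ and the fibres cover $\gf(q)^*$ appropriately. With $e = 3s+4$ and $s^2\equiv 2$, I would expand $t^{3s+4} = (t^s)^3 t^4$ and use the additive-on-$s$-powers structure of Frobenius: writing $\phi(t)=t^s$ with $\phi^2(t) = t^{s^2} = t^2$ (as functions on $\gf(q)$, since $s^2 \equiv 2 \bmod q-1$... more precisely $\phi\circ\phi$ is the squaring map), the polynomial $P_e$ becomes a low-degree polynomial in $t$ and $\phi(t)$. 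Then $P_e(t)+P_e(t')=0$ factors through $(t+t')$ and a symmetric remainder; after dividing out $t+t'$ one is left with a curve whose $\gf(q)$-points must be counted. This is exactly the classical Glynn computation, and I expect the main obstacle to be precisely this step: establishing that the residual equation has no spurious solutions, which in the literature is done either by a clever factorization exploiting $\gcd(3, 2^m-1)=1$ for odd $m$ (so cubing is a bijection) together with the $s$-Frobenius identities, or by a Hasse--Weil style bound on an associated curve.

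Accordingly, the steps in order are: (i) reduce via Theorem~\ref{thm-opoly2to1} to the $2$-to-$1$ property of $f_u$; (ii) normalize out $u$ and pass to the variable $t=x/(x+y)$, reducing to the fibre structure of $P_e(t)=t^e+(1+t)^e$; (iii) use $s=2^{(m+1)/2}$, $s^2\equiv 2\pmod{2^m-1}$, and $\gcd(3,2^m-1)=1$ (valid since $m$ is odd) to rewrite and factor $P_e(t)+P_e(t')$ by $(t+t')$; (iv) show the cofactor, a symmetric polynomial in $t,t'$ expressible through Frobenius twists, has only the diagonal solutions over $\gf(q)$, so each nonzero value is attained exactly twice; (v) conclude that $x^e$ satisfies both conditions of Theorem~\ref{thm-hyperovaloply} and is therefore an o-polynomial. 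I would also record, as an immediate corollary of Theorem~\ref{thm-basicproperty2}, that the five companion exponents $1/e$, $1-e$, $1/(1-e)$, $e/(e-1)$, $(e-1)/e$ give further o-monomials, matching the known Glynn~I family. Since the statement is a classical result of Glynn~\cite{Glynn83}, a fully self-contained proof would essentially reproduce that paper; in the interest of brevity the cleanest route is to cite it, but the above sketch is the structure one would fill in.
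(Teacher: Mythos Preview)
The paper does not prove this theorem at all: it is stated as a known result and attributed to Glynn~\cite{Glynn83}, with no accompanying argument. Your proposal ultimately lands on the same conclusion (``the cleanest route is to cite it''), so in that sense you agree with the paper. The sketch you give before that is a reasonable outline of the standard approach via Theorem~\ref{thm-opoly2to1} and the substitution $t=x/(x+y)$, and the arithmetic facts you invoke ($s^2\equiv 2\pmod{2^m-1}$, $\gcd(3,2^m-1)=1$ for odd $m$) are correct; but you rightly flag that step~(iv), showing the cofactor has only diagonal solutions, is the whole difficulty, and you do not carry it out. So your write-up is strictly more than what the paper offers, but as a proof it remains a plan rather than an argument: everything hinges on a computation you have not done.
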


The second family of o-polynomials discovered by Glynn is documented in the following theorem. 

\begin{theorem} 
Let $m$ be odd. Then 
\begin{eqnarray*}
\Glynntwo(x)= \left\{ 
\begin{array}{r}
x^{2^{(m+1)/2}+2^{(3m+1)/4}}   \mbox{ if } m \equiv 1 \pmod{4}, \\
x^{2^{(m+1)/2}+2^{(m+1)/4}}    \mbox{ if } m \equiv 3 \pmod{4}          
\end{array}
\right. 
\end{eqnarray*}
 is an o-polynomial over $\gf(q)$. 
\end{theorem}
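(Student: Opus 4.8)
The plan is to verify directly that the monomial $x^{e}$ --- with $e=2^{(m+1)/2}+2^{(3m+1)/4}$ when $m\equiv1\pmod4$ and $e=2^{(m+1)/2}+2^{(m+1)/4}$ when $m\equiv3\pmod4$ --- meets one of the characterisations of o-polynomials already recorded above. The most economical route is Maschietti's criterion (Theorem~\ref{thm-opoly2to1}): since $0^{e}=0$, it is enough to prove that $f_{u}(x)=x^{e}+ux$ is $2$-to-$1$ on $\gf(q)$ for every $u\in\gf(q)^{*}$. One could instead run Segre's criterion (Theorem~\ref{thm-hyperovaloply}), checking that $x^{e-1}$ and $((x+1)^{e}+1)x^{q-2}$ are permutation polynomials of $\gf(q)$; the computational cost is essentially the same.

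First I would settle the arithmetic. Writing $e=2^{a}+2^{b}=2^{b}(2^{a-b}+1)$ with $a>b$, one checks in each congruence class of $m$ that $a-b$ equals $(m-1)/4$ or $(m+1)/4$, that $2^{a-b}+1$ divides $2^{2(a-b)}-1$, and that $\gcd(2(a-b),m)=1$ (because $m\equiv1\pmod{(m-1)/2}$ when $m\equiv1\pmod4$, and $m\equiv-1\pmod{(m+1)/2}$ when $m\equiv3\pmod4$); hence $\gcd(e,2^{m}-1)=1$, so $x^{e}$ is a permutation polynomial with $1^{e}=1$ and $\deg(x^{e})<q$. A parallel look at the binary expansion of $e-1=2^{a}+2^{b}-1$ yields $\gcd(e-1,2^{m}-1)=1$ as well, so $x^{e}$ is in particular a d-monomial (Lemma~\ref{lem-381}). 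Granting $\gcd(e-1,q-1)=1$, the substitution $x=\lambda y$ with $\lambda^{e-1}=u$ sends $f_{u}$ to $\lambda^{e}(y^{e}+y)$, so the whole claim collapses to the single statement that $x\mapsto x^{e}+x$ is $2$-to-$1$ on $\gf(q)$; equivalently, for every $x\in\gf(q)$ the equation $x^{e}+x=y^{e}+y$ should have exactly one solution $y\neq x$.

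The decisive step exploits that $e$ is a sum of two powers of $2$. Putting $t=x+y$ and expanding $x^{e}+y^{e}$ with the Frobenius map, the equation $x^{e}+x=y^{e}+y$ becomes $t^{2^{a}+2^{b}}+x^{2^{b}}t^{2^{a}}+x^{2^{a}}t^{2^{b}}+t=0$, and after the bijective substitution $\tau=t^{2^{b}}$ it reads $M_{x}(\tau)=\tau^{1+2^{a-b}}$, where $M_{x}(\tau)=\tau^{2^{m-b}}+x^{2^{b}}\tau^{2^{a-b}}+x^{2^{a}}\tau$ is a $\gf(2)$-linearised polynomial in $\tau$. Thus $x\mapsto x^{e}+x$ is $2$-to-$1$ precisely when, for every $x$, this equation has exactly one root $\tau\neq0$ in $\gf(q)$ (the root $\tau=0$ corresponding to $y=x$). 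The remaining task is the root count: bound the number of solutions from above using the linearised structure of $M_{x}$ and the coprimality facts above (a nonzero $\gf(2)$-linearised polynomial whose largest term is $\tau^{2^{r}}$ has at most $2^{r}$ roots, and $\tau\mapsto\tau^{1+2^{a-b}}$ is a bijection since $\gcd(1+2^{a-b},2^{m}-1)=1$), and then exhibit the non-trivial root explicitly to conclude that there are exactly two. This last verification is the technical heart of Glynn's argument, and it has to be carried out separately for $m\equiv1$ and $m\equiv3\pmod4$; I expect it to be the main obstacle, since it is a delicate computation tied to the precise shape of the exponents rather than a formal manipulation. A complete write-up would accordingly either reproduce the calculation of \cite{Glynn83} or cite it.
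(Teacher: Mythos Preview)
The paper does not prove this theorem at all; it is simply recorded as a known result of Glynn and attributed to \cite{Glynn83}. So there is no ``paper's own proof'' to compare against, and in the end your proposal and the paper's treatment coincide: both defer to Glynn.

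That said, your reduction is sound up to the point where it stops. The arithmetic checks on $\gcd(e,q-1)$ and $\gcd(e-1,q-1)$ are correct, the passage via Theorem~\ref{thm-opoly2to1} and the substitution $x=\lambda y$ legitimately reduce everything to showing that $x\mapsto x^{e}+x$ is $2$-to-$1$, and your rewriting in the variable $\tau=t^{2^{b}}$ is accurate. The difficulty is exactly where you place it, but the tools you list do not do the job you suggest. The polynomial $M_{x}(\tau)+\tau^{1+2^{a-b}}$ is \emph{not} $\gf(2)$-linearised --- the term $\tau^{1+2^{a-b}}$ spoils that --- so the ``at most $2^{r}$ roots'' bound for linearised polynomials does not apply to the full equation; and even if it did, the relevant $r$ is $m-b$, giving a bound far larger than $2$. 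The bijectivity of $\tau\mapsto\tau^{1+2^{a-b}}$ is true but yields no root count for the mixed equation. Exhibiting one nontrivial root, as you propose, would only give the lower bound; the matching upper bound is precisely the content of Glynn's argument and is not recoverable from the facts you have assembled. Your closing sentence is therefore the honest summary: a complete proof must reproduce or cite \cite{Glynn83}, which is exactly what the paper does.
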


The following describes another class of o-polynomials discovered by Cherowitzo  \cite{Ch88,Ch96}.  

\begin{theorem}[\cite{DingYuan}]
Let $m$ be odd and $e=(m+1)/2$. Then 
$$
\Cherowitzo(x)=x^{2^e}+x^{2^e+2}+x^{3 \times 2^e +4} 
$$ 
is an o-polynomial over $\gf(q)$.
\end{theorem}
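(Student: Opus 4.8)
The plan is to apply Maschietti's criterion, Theorem~\ref{thm-opoly2to1}. Since every exponent in $\Cherowitzo$ is positive we have $\Cherowitzo(0)=0$, so it suffices to show that for each $u\in\gf(q)^{*}$ the map $f_{u}(x):=\Cherowitzo(x)+ux$ is $2$-to-$1$ on $\gf(q)$; equivalently, that for every $x\in\gf(q)$ there is exactly one $a\in\gf(q)^{*}$ with $f_{u}(x+a)=f_{u}(x)$.

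Before the main computation I would fix $\sigma:=2^{e}=2^{(m+1)/2}$ and record the two arithmetic facts that drive the proof. Since $2e=m+1$, we have $\sigma^{2}\equiv 2\pmod{q-1}$, hence $x^{\sigma^{2}}=x^{2}$ for every $x\in\gf(q)$; and since $\gcd(m+1,m)=1$ we have $\gcd(e,m)=1$, so $x\mapsto x^{\sigma}$ generates $\mathrm{Gal}(\gf(q)/\gf(2))$. Using $x^{\sigma^{2}}=x^{2}$ one obtains the characteristic-$2$ factorization
$$\Cherowitzo(x)=x^{\sigma}+x^{\sigma}x^{2}+x^{\sigma}\bigl(x^{\sigma+2}\bigr)^{2}=x^{\sigma}\bigl(1+x^{2}+x^{2\sigma+4}\bigr)=x^{\sigma}\bigl(1+x+x^{\sigma+2}\bigr)^{2},$$
which isolates the genuinely quadratic part of $f_{u}$.

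Now for the core argument. Writing $y=x+a$ with $a\in\gf(q)^{*}$, the collision $f_{u}(x)=f_{u}(y)$ reads $\Cherowitzo(x)+\Cherowitzo(x+a)=ua$. Substituting the factored form, expanding $(1+x+x^{\sigma+2})^{2}$ via the Frobenius, and reducing every exponent with $x^{\sigma^{2}}=x^{2}$, this becomes a polynomial identity involving only the exponents $1,2,\sigma,\sigma+2$ and their doubles; I would then count its $\gf(q)$-solutions by the standard finite-field toolkit — passing to a well-chosen auxiliary variable (e.g.\ $t=x/a$ to absorb the scale, or $z=x^{1+\sigma}$), extracting a discriminant- or absolute-trace condition of the shape $\Tr\bigl(\psi_{u}(a)\bigr)=0$, and verifying that exactly one $a\in\gf(q)^{*}$ meets it for each fixed $x$ and $u$ (the ``at most one'' half rules out fibers of size $\geq 3$, the ``at least one'' half rules out fibers of size $1$). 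This gives the $2$-to-$1$ property, and Theorem~\ref{thm-opoly2to1} then yields that $\Cherowitzo$ is an o-polynomial.

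The main obstacle is precisely this uniform root count. In the d-monomial cases of Section~\ref{sec-designdpoly} the scalar $u$ cancels under a change of variable (cf.\ Lemma~\ref{lem-381}), so the collision equation diagonalizes and the value spectrum trivializes; but $\Cherowitzo$ is a genuine trinomial with no monomial reduction and no nontrivial scaling symmetry, so every $u\in\gf(q)^{*}$ must be treated and one is forced to bound the solution count of a true trinomial equation. This is why the Cherowitzo family was first located by computer search, with the proof that it yields o-polynomials coming later; the factorization $\Cherowitzo(x)=x^{\sigma}(1+x+x^{\sigma+2})^{2}$ is what turns that otherwise generic root count into a tractable trace/quadratic condition.
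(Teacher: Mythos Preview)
The paper does not prove this theorem: it is stated with a citation to \cite{DingYuan} (the Cherowitzo hyperoval itself goes back to \cite{Ch88,Ch96}) and no argument is supplied. There is thus no proof in the paper to compare your attempt against; the result is imported as a black box.

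As for your proposal on its own merits: the framework is sound. The factorization $\Cherowitzo(x)=x^{\sigma}(1+x+x^{\sigma+2})^{2}$ is correct and genuinely useful, and reducing to the $2$-to-$1$ criterion of Theorem~\ref{thm-opoly2to1} is the natural line of attack. But there is a real gap, and you have already named it: the entire substance of the theorem lies in the uniform root count that you describe only as ``the standard finite-field toolkit'', ``a well-chosen auxiliary variable'', and ``a trace condition of the shape $\Tr(\psi_u(a))=0$''. None of this is actually carried out, and the collision equation $\Cherowitzo(x)+\Cherowitzo(x+a)=ua$ does not collapse to a single clean trace condition in any routine way --- the published arguments require either a delicate algebraic analysis or the geometric flock/$q$-clan machinery. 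What you have written is a correct specification of \emph{what} must be verified, with no mechanism for \emph{how}; and the ``how'' is precisely the theorem. Your own closing remark --- that the Cherowitzo family was found by computer and proved only later --- is an accurate acknowledgment that the step you have left blank is the hard one.
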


For this o-trinomial, we have the following conclusions.  
\begin{enumerate}
\item $\overline{\Cherowitzo}(x)=x^{q-2^e}+x^{q-2^e-2}+x^{q-3 \times 2^e -4}$. 
\item $\Cherowitzo^{-1}(x)=x(x^{2^e+1}+x^3+x)^{2^{e-1}-1}$. 
\end{enumerate}

The following  documents a family of o-trinomials due to Payne. 

\begin{theorem}[\cite{Pay85}]
Let $m$ be odd. Then 
$\Payne(x)=x^{\frac{5}{6}}+x^{\frac{3}{6}}+x^{\frac{1}{6}}$ is an o-polynomial over $\gf(q)$.
\end{theorem}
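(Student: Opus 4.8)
The plan is to apply Maschietti's criterion, Theorem~\ref{thm-opoly2to1}: since $\Payne(0)=0$, it suffices to prove that $f_u(x):=\Payne(x)+ux$ is $2$-to-$1$ on $\gf(q)$ for every $u\in\gf(q)^*$. First I would record that for $m$ odd one has $\gcd(6,2^m-1)=1$ (the integer $2^m-1$ is odd, and since $2^m\equiv -1\pmod 3$ it is $\equiv 1\pmod 3$), so $x\mapsto x^6$ permutes $\gf(q)$ and the exponents $1/6,3/6,5/6$ are meaningful; substituting $x=y^6$ replaces $f_u$ by the honest polynomial $Q_u(y)=uy^6+y^5+y^3+y$, and $f_u$ is $2$-to-$1$ exactly when $Q_u$ is. (Incidentally $\Payne(1)=1+1+1=1$, so the normalisation of Theorem~\ref{thm-hyperovaloply} holds too.) Everything is thereby reduced to the following: for $m$ odd and every $u\in\gf(q)^*$, the map $y\mapsto uy^6+y^5+y^3+y$ on $\gf(2^m)$ is $2$-to-$1$.

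To analyse it I would encode a collision $Q_u(y_1)=Q_u(y_2)$ with $y_1\ne y_2$ by the symmetric coordinates $s=y_1+y_2\in\gf(q)^*$ and $p=y_1y_2$. Dividing $Q_u(y_1)+Q_u(y_2)$ by $s$ and rewriting the power sums through $y_1^3+y_2^3=s(s^2+p)$, $y_1^5+y_2^5=s(s^4+ps^2+p^2)$ and $y_1^6+y_2^6=s^2(s^4+p^2)$ produces the quadratic
\begin{equation*}
(us+1)\,p^2+(s^2+1)\,p+(us^5+s^4+s^2+1)=0 ,
\end{equation*}
under the realisability constraint that $\{y_1,y_2\}$ actually lies in $\gf(q)$, which for $s\ne 0$ reads $\Tr_{\gf(q)/\gf(2)}(p/s^2)=0$. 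After the normalisation $p=s^2\pi$ and the usual reduction of a binary quadratic $A\pi^2+B\pi+C$ to $\rho^2+\rho+AC/B^2$, the number of admissible $p$ attached to a given $s$ is governed by the single trace value $\Tr\big((us+1)(us^5+s^4+s^2+1)/(s+1)^4\big)$ combined with the one linear condition $\Tr(\pi)=0$.

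I would then finish by the standard dichotomy: $Q_u$ is $2$-to-$1$ if and only if (i) no value of $Q_u$ has three or more preimages, and (ii) the number of unordered collision pairs is exactly $q/2$ (given (i), writing $N_j$ for the number of values with $j$ preimages, one has $N_1+2N_2=q$ and $N_2=q/2$, hence $N_1=0$). Claim (i) I would settle by a symmetric-function argument: the roots in $\ov{\gf(q)}$ of $Q_u(y)+c$ have elementary symmetric functions $e_1=e_3=e_5=1/u$, $e_2=e_4=0$, $e_6=c/u$ (since $Q_u$ has no $y^4$ or $y^2$ term), and from this one shows three of them cannot simultaneously lie in $\gf(q)$. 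Claim (ii) is the identity $\sum_{s\in\gf(q)^*}\#\{p\in\gf(q):\text{the quadratic holds and }\Tr(p/s^2)=0\}=q/2$, to be proved by evaluating the associated additive-character sums, after dealing separately with the two degenerate slopes $s=1$ (where $s^2+1=0$) and $s=1/u$ (where $us+1=0$) at which the quadratic drops degree. The main obstacle is exactly this trace/character bookkeeping in (ii): it will go through only because of an arithmetic coincidence special to the exponent set $\{1,3,5\}$, and I expect that after the substitution $p=s^2\pi$ the coefficients collapse — using relations such as $s^4+s^2+1=(s^2+s+1)^2$ — so that the character sum cancels cleanly; locating that collapse, rather than any single manipulation, is where the real work lies. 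A possible alternative to (ii) is to recast $2$-to-$1$-ness as the ``all secant slopes distinct'' property of the point set $\{(y^5+y^3+y,\,y^6,1):y\in\gf(q)\}$ and show the resulting rational map is injective; this leads to the same quadratic but may package the bookkeeping more transparently.
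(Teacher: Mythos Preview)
The paper does not prove this theorem at all: it is quoted with a citation to \cite{Pay85} and used as a known input, with no argument supplied. There is therefore no paper proof to compare against.

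As to your outline on its own merits: the reduction via Theorem~\ref{thm-opoly2to1} and the substitution $x=y^{6}$ (valid since $\gcd(6,2^m-1)=1$ for odd $m$) are correct, and the collision bookkeeping with $s=y_1+y_2$, $p=y_1y_2$ is set up properly, including the resulting quadratic in $p$. But what you have written is a plan, not a proof, and both halves of your dichotomy are left open. For~(ii) you say outright that ``locating that collapse \dots\ is where the real work lies''; fair enough, but nothing you wrote gives evidence that the required trace identity actually holds --- you are hoping the exponent set $\{1,3,5\}$ produces a miracle, not exhibiting one. For~(i) the gap is more serious: you assert that from $e_1=e_3=e_5=1/u$ and $e_2=e_4=0$ ``one shows three of them cannot simultaneously lie in $\gf(q)$'', but this does not follow from those relations. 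If three roots lie in $\gf(q)$, the complementary cubic factor also has $\gf(q)$-coefficients, and writing out the relations between the two triples of elementary symmetric functions gives a determined system over $\gf(q)$ with no visible obstruction. You would need a genuinely new idea here, not just the Newton--Girard identities.

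So the route is plausible in outline, but neither step is close to complete, and step~(i) as stated is not correct without substantial further argument. Payne's original treatment proceeds geometrically (via generalized quadrangles) rather than through a direct $2$-to-$1$ verification of this kind.
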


We have the following statements regarding the Payne o-trinomial.  
\begin{enumerate}
\item $\Payne(x)=xD_5(x^{\frac{1}{6}}, 1)$, where $D_5(a, x)$ is the Dickson polynomial of order $5$.  
\item $\overline{\Payne}(x)=\Payne(x)$. 
\item Note that 
$$ 
\frac{1}{6}=\frac{5 \times 2^{m-1}-2}{3}. 
$$
We have then 
$$ 
\Payne(x)=x^{\frac{2^{m-1}+2}{3}} + x^{2^{m-1}} + x^{\frac{5 \times 2^{m-1}-2}{3}}. 
$$
\end{enumerate}

\begin{theorem}[\cite{DingYuan}]
Let $m$ be odd. Then 
\begin{eqnarray}\label{eqn-PayneInverse}
\Payne^{-1}(x)=\left( D_{\frac{3 \times 2^{2m}-2}{5}}(x, 1)\right)^6. 
\end{eqnarray}  
\end{theorem}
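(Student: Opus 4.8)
The plan is to factor $\Payne$ as a composition of two permutations of $\gf(q)$ whose inverses are easy to name, invert each factor, and recombine. Write $\rho(x)=x^{6}$. For odd $m$ the integer $q-1=2^{m}-1$ is odd and $2^{m}\equiv(-1)^{m}\equiv 2\pmod 3$, so $\gcd(6,q-1)=1$; hence $\rho$ is a permutation of $\gf(q)$ with $\rho^{-1}(x)=x^{1/6}$. I would also record, for use below, that $2$ has order $4$ modulo $5$ and $2m\equiv 2\pmod 4$, so $2^{2m}\equiv 4\pmod 5$; consequently $5\mid 3\cdot 2^{2m}-2$, the number $n^{*}:=\frac{3\cdot 2^{2m}-2}{5}$ is a positive integer, and $5n^{*}=3(2^{2m}-1)+1\equiv 1\pmod{q^{2}-1}$.

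Next I would establish the key identity $\Payne(x)=D_{5}(x^{1/6},1)$ as functions on $\gf(q)$, where $D_{5}(X,1)=X^{5}-5X^{3}+5X=X^{5}+X^{3}+X$ is the degree-$5$ Dickson polynomial with parameter $1$ (the last equality over $\gf(2^{m})$). Substituting $x=u^{6}$ in $\Payne(x)=x^{5/6}+x^{3/6}+x^{1/6}$ and using $6\cdot\frac{1}{6}\equiv 1\pmod{q-1}$ gives $\Payne(u^{6})=u^{5}+u^{3}+u=D_{5}(u,1)$ for all $u\in\gf(q)$; putting $u=x^{1/6}$ yields the identity. Equivalently $\Payne=D_{5}(\cdot,1)\circ\rho^{-1}$, so $D_{5}(\cdot,1)=\Payne\circ\rho$ is a permutation of $\gf(q)$, and $\Payne^{-1}=\rho\circ D_{5}(\cdot,1)^{-1}$, that is, $\Payne^{-1}(x)=\bigl(D_{5}(\cdot,1)^{-1}(x)\bigr)^{6}$.

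It remains to identify $D_{5}(\cdot,1)^{-1}$ on $\gf(q)$ as $D_{n^{*}}(\cdot,1)$. For this I would invoke the composition rule $D_{ab}(X,1)=D_{a}(D_{b}(X,1),1)$ for Dickson polynomials together with the fact that $D_{a}(\cdot,1)$ and $D_{b}(\cdot,1)$ induce the same function on $\gf(q)$ whenever $a\equiv b\pmod{q^{2}-1}$. Granting these, for every $x\in\gf(q)$ one gets $D_{5}\bigl(D_{n^{*}}(x,1),1\bigr)=D_{5n^{*}}(x,1)=D_{1}(x,1)=x$, since $5n^{*}\equiv 1\pmod{q^{2}-1}$ and $D_{1}(X,1)=X$; as $D_{5}(\cdot,1)$ is a bijection of the finite set $\gf(q)$, this right inverse is the two-sided inverse, so $D_{5}(\cdot,1)^{-1}=D_{n^{*}}(\cdot,1)$ on $\gf(q)$. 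Substituting this into the displayed formula gives $\Payne^{-1}(x)=\bigl(D_{n^{*}}(x,1)\bigr)^{6}=\Bigl(D_{\frac{3\cdot 2^{2m}-2}{5}}(x,1)\Bigr)^{6}$, which is the assertion.

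The one genuinely non-routine ingredient is the index-reduction statement for Dickson polynomials, and that is where the work lies. I would prove it via the substitution $c=y+y^{-1}$, $y\in\gf(q^{2})^{*}$: each $c\in\gf(q)$ arises this way, because a root $y$ of $Z^{2}+cZ+1$ over $\gf(q)$ lies in $\gf(q^{2})$, is nonzero, and has companion root $1/y$ (the product of the two roots is $1$), so that $y+1/y=c$ (their sum is $c$ in characteristic two); and then the Dickson functional equation gives $D_{a}(c,1)=y^{a}+y^{-a}$, which depends on $a$ only modulo $\ord(y)$, a divisor of $q^{2}-1$. This disposes of the ``split'' case $y\in\gf(q)^{*}$ and the ``non-split'' case $y^{q+1}=1$ uniformly. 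Everything else reduces to the routine congruence and exponent verifications indicated above.
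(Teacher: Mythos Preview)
Your proof is correct. The paper itself does not supply a proof of this statement; it is quoted from \cite{DingYuan}, and the only hint the paper offers toward a derivation is the preceding remark that $\Payne(x)=D_5(x^{1/6},1)$ (the printed version carries an extra factor of~$x$, evidently a misprint). Your argument is exactly the natural one that this decomposition suggests: write $\Payne=D_5(\cdot,1)\circ\rho^{-1}$ with $\rho(x)=x^{6}$, so that $\Payne^{-1}=\rho\circ D_5(\cdot,1)^{-1}$, and then identify $D_5(\cdot,1)^{-1}$ on $\gf(q)$ with $D_{n^*}(\cdot,1)$ via the Dickson composition law $D_{a}(D_{b}(X,1),1)=D_{ab}(X,1)$ together with index reduction modulo $q^{2}-1$. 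The arithmetic verifications ($\gcd(6,q-1)=1$ for odd $m$, $5\mid 3\cdot 2^{2m}-2$, and $5n^{*}\equiv 1\pmod{q^{2}-1}$) and the parametrisation $c=y+y^{-1}$ with $y\in\gf(q^{2})^{*}$ are all handled cleanly. There is nothing further in the paper to compare against, and no gap in your reasoning.
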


The Subiaco o-polynomials are given in the following theorem \cite{Subiaco}. 

\begin{theorem}\label{thm-Subiaco}
Define 
$$ 
\Subiaco_a(x)=((a^2(x^4+x)+a^2(1+a+a^2)(x^3+x^2)) (x^4 + a^2 x^2+1)^{q-2}+x^{2^{m-1}},
$$ 
where $\tr(1/a)=1$ and $d \not\in \gf(4)$ if $m \equiv 2 \bmod{4}$. Then $\Subiaco_a(x)$ is an 
o-polynomial over $\gf(q)$. 
\end{theorem}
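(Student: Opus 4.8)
The plan is to apply Maschietti's criterion (Theorem~\ref{thm-opoly2to1}). The hypothesis $\Subiaco_a(0)=0$ is immediate: at $x=0$ the numerator $a^2(x^4+x)+a^2(1+a+a^2)(x^3+x^2)$ vanishes while $(x^4+a^2x^2+1)^{q-2}$ takes the value $1$, and $x^{2^{m-1}}$ also vanishes; so it remains to show that $f_u(x):=\Subiaco_a(x)+ux$ is $2$-to-$1$ on $\gf(q)$ for every $u\in\gf(q)^{*}$. I would first make two reductions. Write $N(x)=a^2(x^4+x)+a^2(1+a+a^2)(x^3+x^2)$ and $D(x)=x^4+a^2x^2+1$. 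The condition $\tr(1/a)=1$ makes $D$ root-free over $\gf(q)$: by the classical criterion $t^2+bt+c$ (with $b\neq 0$) has a root in $\gf(q)$ iff $\tr(c/b^2)=0$, so $t^2+a^2t+1$ has no root in $\gf(q)$ because $\tr(1/a^4)=\tr(1/a)=1\neq0$; and since every element of $\gf(q)$ is a square, $D$ itself never vanishes on $\gf(q)$. Hence $(x^4+a^2x^2+1)^{q-2}$ agrees with the true inverse $1/D(x)$ everywhere, so $N/D$ is a genuine function $\gf(q)\to\gf(q)$. Secondly, $2\cdot 2^{m-1}\equiv 1\pmod{q-1}$, so $x^{2^{m-1}}$ is the square-root function, which is additive in characteristic $2$. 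Thus on $\gf(q)$ we may work with $\Subiaco_a(x)=N(x)/D(x)+\sqrt{x}$.

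Next, fix $u\in\gf(q)^{*}$ and suppose $f_u(x)=f_u(y)$ with $x\neq y$; set $s=x+y\in\gf(q)^{*}$. Using $\sqrt{x}+\sqrt{y}=\sqrt{s}$ and clearing denominators, this becomes
\[
 N(x)D(y)+N(y)D(x)=(\sqrt{s}+us)\,D(x)D(y).
\]
Both sides are symmetric in $x$ and $y$, so they can be rewritten through the elementary symmetric functions $s=x+y$ and $\pi=xy$ via identities like $x^2+y^2=s^2$, $x^3+y^3=s^3+s\pi$, $x^4+y^4=s^4$, $x^2y^2=\pi^2$; the equation then takes the shape $C_{a,u}(s,\pi)=0$ with $C_{a,u}$ of degree at most $4$ in $\pi$. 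The task is to show this constrains the fibres of $f_u$ to have size exactly $2$: concretely, for each $s\in\gf(q)^{*}$ one analyses the roots $\pi$ of $C_{a,u}(s,\,\cdot\,)$, keeps those for which $z^2+sz+\pi$ splits over $\gf(q)$ (that is, $\tr(\pi/s^2)=0$), and checks that the resulting unordered pairs $\{x,y\}$ account for exactly $q/2$ fibres while no fibre is larger. The side condition $a\notin\gf(4)$ in the case $m\equiv 2\pmod{4}$ is exactly what rules out the degenerate $a$ for which this analysis breaks down, so it gets used precisely here.

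I expect this last step to be the main obstacle: the polynomial $C_{a,u}(s,\pi)$ is cumbersome, and controlling its admissible roots together with the automatic trace condition on $z^2+sz+\pi$ --- and separately handling the small exceptional fields --- is a lengthy and delicate computation. This is why \cite{Subiaco} establishes the o-polynomial property indirectly, deriving the Subiaco ovals from the Subiaco $q$-clan via the flock and generalized quadrangle correspondence rather than by direct verification. One could instead use Segre's criterion (Theorem~\ref{thm-hyperovaloply}) and check that $g_a(x)=(\Subiaco_a(x+a)+\Subiaco_a(a))x^{q-2}$ is a permutation of $\gf(q)$ for each $a$, but this produces an algebraic problem of the same flavour and difficulty; for a genuinely self-contained proof I would still anticipate needing the flock-theoretic route.
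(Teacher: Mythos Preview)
The paper does not prove Theorem~\ref{thm-Subiaco}; it simply states the result with a citation to \cite{Subiaco}. So there is no in-paper argument to compare your proposal against.

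Your preliminary reductions are correct: $\Subiaco_a(0)=0$ is immediate; the trace hypothesis $\tr(1/a)=1$ does force $D(x)=x^4+a^2x^2+1$ to be nonvanishing on $\gf(q)$ (via $\tr(1/a^4)=\tr(1/a)$ under Frobenius); and $x^{2^{m-1}}$ is indeed the square-root map, so the symmetric-function rewriting through $s=x+y$, $\pi=xy$ is a legitimate way to set up the $2$-to-$1$ verification required by Theorem~\ref{thm-opoly2to1}.

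That said, your proposal is not a proof: you explicitly identify the core step---controlling the admissible roots $\pi$ of $C_{a,u}(s,\pi)$ together with the splitting condition $\tr(\pi/s^2)=0$---as the ``main obstacle,'' and then defer to the flock/$q$-clan argument of \cite{Subiaco} rather than carrying it out. This is an honest assessment, and it matches what the paper itself does (namely, cite \cite{Subiaco}). But as written it is an outline plus a literature pointer, not a self-contained argument. If you want the proposal to stand as a proof, you would have to either complete the symmetric-function computation in full or reproduce the flock correspondence; as you note, the former is long and the latter is the route actually taken in the cited source.
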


As a corollary of  Theorem \ref{thm-Subiaco}, we have the following. 

\begin{corollary} 
Let $m$ be odd. Then 
\begin{eqnarray}\label{cor-Subiaco}
\Subiaco_1(x)=(x+x^2+x^3+x^4) (x^4 + x^2+1)^{q-2}+x^{2^{m-1}} 
\end{eqnarray}
is an o-polynomial over $\gf(q)$. 
\end{corollary}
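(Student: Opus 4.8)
The plan is to derive the corollary directly from Theorem~\ref{thm-Subiaco} by specialising the parameter $a$ to $a=1$, verifying that this choice satisfies the hypotheses of that theorem, and simplifying the resulting polynomial using arithmetic in characteristic $2$. So the whole argument is essentially a substitution together with a check of admissibility.

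First I would confirm that $a=1$ is admissible. Theorem~\ref{thm-Subiaco} requires $\tr(1/a)=1$, i.e.\ $\tr(1)=1$ in our case; since $m$ is odd, the absolute trace of $1 \in \gf(2^m)$ equals $m \bmod 2 = 1$, so this holds. The secondary restriction (``$d \notin \gf(4)$ if $m \equiv 2 \pmod 4$'', where $d$ should read $a$) is vacuous here, because $m$ odd implies $m \not\equiv 2 \pmod 4$; it therefore imposes no condition on the choice $a=1$. Next I would substitute $a=1$ into the formula for $\Subiaco_a(x)$. In characteristic $2$ one has $a^2=1$ and $1+a+a^2 = 1+1+1 = 1$, whence
\begin{eqnarray*}
\Subiaco_1(x) &=& \bigl((x^4+x)+(x^3+x^2)\bigr)(x^4+x^2+1)^{q-2}+x^{2^{m-1}} \\
&=& (x+x^2+x^3+x^4)(x^4+x^2+1)^{q-2}+x^{2^{m-1}},
\end{eqnarray*}
which is precisely the polynomial in \eqref{cor-Subiaco}. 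Hence, by Theorem~\ref{thm-Subiaco}, it is an o-polynomial over $\gf(q)$.

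The only point that genuinely needs attention --- and thus the ``main obstacle'', modest as it is --- is the verification that $a=1$ lies in the admissible parameter set of Theorem~\ref{thm-Subiaco}, i.e.\ that $\tr(1)=1$ when $m$ is odd; once this is noted, the rest is a one-line simplification. As an optional sanity check one may observe that $x^4+x^2+1 = (x^2+x+1)^2$ has no root in $\gf(q)$ when $m$ is odd (its roots lie in $\gf(4) \not\subseteq \gf(2^m)$), so the factor $(x^4+x^2+1)^{q-2}$ acts as the genuine multiplicative inverse of $x^4+x^2+1$ on all of $\gf(q)$, consistent with $\Subiaco_1$ being a well-defined o-polynomial on $\gf(q)$.
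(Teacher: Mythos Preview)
Your proposal is correct and follows exactly the approach the paper intends: the corollary is stated immediately after Theorem~\ref{thm-Subiaco} with the remark ``As a corollary of Theorem~\ref{thm-Subiaco}, we have the following,'' and no further proof is given, so specialising $a=1$ and checking $\tr(1)=1$ for odd $m$ is precisely what is needed.
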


\subsection{Combinatorial $t$-designs from o-polynomials} 

In this section, we plug o-polynomials into the construction of Section \ref{sec-construct}
to construct $2$-designs and $3$-designs. By Theorem \ref{thm-opoly2to1}, o-polynomials are 
d-polynomials. This fact will play an important role. 

\subsubsection{Families of $2$-designs and $3$-designs from o-polynomials}

We start with a few auxiliary results. Let
$g(x)$  be a  polynomial over $\mathrm{GF}(q)$. The value set of 
$g(x)$ is the image of the induced map $g: \mathrm{GF}(q) \mapsto \mathrm{GF}(q)$. 
Thus the value set is
 \begin{align*}
 V(g)=\{g(x): x\in \mathrm{GF}(q)\}.
 \end{align*}
We  denote the cardinality of $V(g)$ by $v(g)$. 
 
\begin{lemma}\label{lem-mar91}
Let $f(x)\in \mathrm{GF}(q)[x]$ be an o-polynomial. For any $u_1, u_2, u_3 \in \mathrm{GF}(q)$ with $(u_1+u_2)(u_2+u_3)(u_3+u_1)  \neq  0$, define
 \begin{align*}
 I(u_1,u_2,u_3)=\left \{(a,b,c)\in \mathrm{GF}(q)^3: ab\neq 0, \{u_1, u_2, u_3\} \subseteq  V(af(x)+bx+c) \right \}.
 \end{align*}
Then $|I(u_1,u_2,u_3)|=\frac{q(q-1)(q-4)}{8}$.\\
\end{lemma}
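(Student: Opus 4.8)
The plan is to count the triples $(a,b,c)$ with $ab \neq 0$ such that all three prescribed values $u_1,u_2,u_3$ lie in the image of $af(x)+bx+c$. First I would reduce the count of $c$ once $a,b$ are fixed: for fixed $(a,b)$ with $ab\neq 0$, the condition $u_i \in V(af(x)+bx+c)$ means there exists $x_i$ with $af(x_i)+bx_i+c = u_i$, i.e. $c = u_i - af(x_i) - bx_i$. So $u_i$ is attained by the shifted polynomial exactly when $u_i - c$ lies in the value set of $h_{a,b}(x) := af(x)+bx$, which is a fixed set depending only on $(a,b)$. Thus I need $c$ such that $u_1-c,\ u_2-c,\ u_3-c$ all lie in $V(h_{a,b})$. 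Since $f$ is an o-polynomial and $a\neq 0$, Theorem~\ref{thm-opoly2to1} tells us $a f_u = af(x)+aux$ is $2$-to-$1$ for every $u\neq 0$; writing $b=au$ we see $h_{a,b}(x)=af(x)+bx$ is $2$-to-$1$ on $\gf(q)$ for every $b\neq 0$ (the case $b=0$ is excluded). Hence $|V(h_{a,b})| = q/2$ for all $a,b$ with $ab\neq 0$.

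The heart of the argument is therefore to count, for a $2$-to-$1$ map $h$ with value set $W$ of size $q/2$, the number of $c\in\gf(q)$ with $\{u_1-c,u_2-c,u_3-c\}\subseteq W$, and then to show this number is the same for every admissible $(a,b)$. Here is where the distinctness hypothesis $(u_1+u_2)(u_2+u_3)(u_3+u_1)\neq 0$ enters: in characteristic $2$ this just says $u_1,u_2,u_3$ are pairwise distinct. I would approach the inner count via the complementary set $\overline W = \gf(q)\setminus W$, also of size $q/2$. The condition fails for a given $c$ iff at least one $u_i - c \in \overline W$; by inclusion–exclusion the number of bad $c$ is $\sum_i |\{c: u_i-c\in\overline W\}| - \sum_{i<j}|\{c: u_i-c, u_j-c \in \overline W\}| + |\{c: \text{all three}\in\overline W\}|$. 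The first sum is $3\cdot(q/2)$. For the pairwise terms I need, for fixed distinct $u_i\neq u_j$, the number of $c$ with both $u_i-c$ and $u_j-c$ in $\overline W$; equivalently the number of pairs $(w,w')\in\overline W^2$ with $w-w' = u_i-u_j$ (a fixed nonzero value), i.e. the value of the difference-counting function of $\overline W$ at $u_i-u_j$. The triple term similarly involves $\overline W$ restricted by two difference equations. So I must understand the additive structure of $W$ (equivalently $\overline W$), and this is where I expect the real work to be.

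The key structural input should be that the value set $W$ of a $2$-to-$1$ o-polynomial map behaves, for difference-counting purposes, like the exterior points: for a hyperoval in $\PG(2,\gf(q))$, secant/exterior line incidences are completely regular, and the ``2-to-1'' property is exactly the affine shadow of ``every line meets the hyperoval in $0$ or $2$ points.'' Concretely, I would show that for every nonzero $d\in\gf(q)$, the number of $x$ with $h(x+d)+h(x) = $ (fixed) and more precisely the number of ordered pairs $(x,y)$ with $h(x)=h(y)$, $x\neq y$, $x+y$ or $x-y$ equal to a given nonzero value, is governed by the second condition in Theorem~\ref{thm-hyperovaloply} (the polynomials $g_a$ being permutations). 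Unwinding this, one gets that $\overline W$ has a difference function that takes the value $q/4 - 1$ on every nonzero difference — i.e. $\overline W$ is (the complement of) a set whose difference pattern is that of a $2$-$(q^2-1,\tfrac12 q^2-1,\tfrac14 q^2-1)$-type configuration, which is precisely the content already recorded in Theorems~\ref{thm-hpodesign1} and~\ref{thm-hpodesign2}. Granting that every nonzero difference is represented in $\overline W$ exactly $q/4-1$ times (and every difference, including $0$, represented in $W$ the complementary number of times), the inclusion–exclusion gives: bad $c$ count $= 3(q/2) - 3(q/4-1) + T$, where $T$ is the triple-overlap count, which I would compute by the same hyperoval-regularity in a "higher" configuration — the three pairwise differences $u_1-u_2, u_2-u_3, u_3-u_1$ sum to $0$, and counting $c$ with all of $u_1-c,u_2-c,u_3-c\in\overline W$ reduces to counting collinear-type triples in $\overline W$, giving $T = q/8 - $ (a constant). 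Putting it together, the number of good $c$ is $q - (\text{bad}) = \frac{q(q-4)}{8}$, independent of $(a,b)$; multiplying by the $q(q-1)$ choices of $(a,b)$ with $ab\neq 0$ yields $|I(u_1,u_2,u_3)| = \frac{q(q-1)(q-4)}{8}$, as claimed.

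\textbf{Main obstacle.} The delicate point is establishing the uniform difference/overlap counts for the value set $W$ of $af(x)+bx$ — that is, translating the defining permutation property of an o-polynomial (condition (2) of Theorem~\ref{thm-hyperovaloply}, or equivalently the $2$-to-$1$ property across all slopes) into the statement that $\overline W$ represents every nonzero difference exactly $q/4-1$ times and that the relevant triple-overlap is a fixed constant. I would expect to extract this either directly from the line-intersection properties of the hyperoval $\cH(f)$ (each affine line in the $(x,y)$-plane meets $\cH(f)$ in $0$ or $2$ points, and the fibers of $y = af(x)+bx+c$ are exactly the intersections with lines of a fixed pencil), or by a counting double-count over pairs $(x,y)$ with $af(x)+bx = af(y)+by$. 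Once that regularity is in hand, the rest is bookkeeping with inclusion–exclusion and is routine.
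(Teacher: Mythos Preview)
Your approach has a fundamental gap: you assume that for each fixed $(a,b)$ with $ab\neq 0$, the number of $c$ with $\{u_1,u_2,u_3\}\subseteq V(af(x)+bx+c)$ is a constant independent of $(a,b)$. This is false. Take the translation o-polynomial $f(x)=x^{2}$: then $h_{a,b}(x)=ax^{2}+bx$ is $\gf(2)$-linear, so its image $W$ is an additive hyperplane of $\gf(q)$. The condition $u_i-c\in W$ for all $i$ forces $u_1+u_2,\,u_2+u_3\in W$; if that holds there are $|W|=q/2$ good values of $c$, and otherwise there are none. So the per-$(a,b)$ count is $q/2$ or $0$ depending on whether the particular hyperplane $W_{a,b}$ contains $u_1+u_2$ and $u_2+u_3$ --- certainly not a constant. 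The same example kills your claimed difference regularity: since $\overline W$ is a coset of a subgroup, its difference function is $q/2$ on elements of $W$ and $0$ on elements of $\overline W$, not $q/4-1$ everywhere. (Numerically: for $q=8$ the lemma gives $|I|=28$, while there are $(q-1)^2=49$ pairs $(a,b)$ with $ab\neq 0$ --- also note your count $q(q-1)$ for these pairs is wrong --- so the per-pair count cannot be a constant integer.)

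The paper avoids all of this with a global double count. Set
\[
T=\{(a,b,c,x_1,x_2,x_3)\in\gf(q)^6:\ af(x_i)+bx_i+c=u_i\ (i=1,2,3)\}
\]
and evaluate $|T|$ two ways. Summing over $(a,b,c)$ uses only that $af(x)+bx+c$ is $2$-to-$1$ when $ab\neq 0$ (so each triple in the image has $2^3=8$ preimage triples) and a permutation when exactly one of $a,b$ vanishes; this gives $|T|=2q(q-1)+8|I(u_1,u_2,u_3)|$. Summing over $(x_1,x_2,x_3)$ uses the hyperoval property directly: for pairwise distinct $x_i$ the points $(f(x_i),x_i,1)$ are non-collinear, hence linearly independent, so the $3\times 3$ linear system in $(a,b,c)$ has a unique solution; this gives $|T|=q(q-1)(q-2)$. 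Equating the two expressions yields the result with no need for any per-$(a,b)$ regularity. The uniform-in-$a$ count $q(q-4)/8$ you seem to be reaching for is the paper's Lemma~\ref{lem-mar92}, but that holds only for o-\emph{monomials} and is obtained by summing over all $b$, not by fixing $(a,b)$.
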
 

\begin{proof}
Put 
\begin{align*}
T=\left \{(a,b,c, x_1,x_2,x_3)\in \mathrm{GF}(q)^6:af(x_i)+bx_i+c=u_i \ (i=1,2,3) \right  \}.
\end{align*}
Then
\begin{align*}
|T| = \sum_{(a,b,c)\in \mathrm{GF}(q)^3} J(a,b,c)= \sum_{(x_1, x_2, x_3) \in \mathrm{GF}(q)^3} K(x_1,x_2,x_3),
\end{align*}
where
\begin{align*}
J(a,b,c)=|\left  \{(x_1, x_2, x_3)\in \mathrm{GF}(q)^3 : af(x_i)+bx_i+c=u_i \ (i=1,2,3)\right \}|,
\end{align*}
and 
\begin{align*}
K(x_1,x_2,x_3)=|\left  \{(a, b, c)\in \mathrm{GF}(q)^3 : af(x_i)+bx_i+c=u_i \ (i=1,2,3)\right \}|.
\end{align*}

Notice that $g(x)=af(x)+bx+c$ is 2-to-1 when $ab \neq 0$. We have $v(g)=q/2$ if $ab \neq 0$. 
If $ab=0$ and $a \neq b$, then $g(x)$ is a permutation. We deduce then  
\begin{align*}
v(af(x)+bx+c)=\begin{cases}
1, & \mbox{ if and only if } a=b=0,\\
 q, & \mbox{ if and only if } ab=0 \text{ and } a\neq b,\\
 q/2, & \mbox{ if and only if } ab\neq 0. 
\end{cases}
\end{align*}  

Since $g(x)=af(x)+bx+c$ is 2-to-1 when $v(g)=q/2$ and  is a permutation when $v(g)=q$, 
we have 
\begin{align*}
J(a,b,c)=\begin{cases}
0, & \mbox{ if } \{u_1, u_2, u_3\} \not \subseteq V(g),\\
 1, &  \mbox{ if } \{u_1, u_2, u_3\}  \subseteq V(g) \text{ and } v(g)=q,\\
 8, &  \mbox{ if } \{u_1, u_2, u_3\}  \subseteq V(g) \text{ and } v(g)=q/2. 
\end{cases}
\end{align*} 
It then follows that 
\begin{align*}
|T|= & \sum_{(a,b,c)\in \mathrm{GF}(q)^3} J(a,b,c)\\
=& |\{(a,b,c) \in \mathrm{GF}(q)^3 : v(af(x)+bx+c)=q\}| + 8 |I(u_1,u_2,u_3)| \\
=& 2(q-1)q + 8 |I(u_1,u_2,u_3)|.
\end{align*} 

Let $x_1, x_2$ and $x_3$ be three pairwise distinct elements in $\mathrm{GF}(q)$. 
Then $(f(x_1), x_1, 1)$, $(f(x_2), x_2, 1)$, and $(f(x_3), x_3, 1)$ are three points 
in the hyperoval defined by the o-polynomial $f(x)$, and thus are linearly independent 
over $\mathrm{GF}(q)$. We then deduce that 
\begin{align*}
K(x_1,x_2,x_3)=\begin{cases}
0, & | \{x_1, x_2, x_3\} | <3,\\
1, & | \{x_1, x_2, x_3\} | =3.
\end{cases}
\end{align*}
Thus,
\begin{align*}
|T| = & \sum_{(x_1, x_2, x_3)\in \mathrm{GF}(q)^3} K(x_1,x_2,x_3)= q(q-1)(q-2).
\end{align*}
Consequently, 
\begin{align*}
I(u_1,u_2,u_3)=\frac{1}{8}\left ( q(q-1)(q-2) -2(q-1)q \right ) =\frac{q(q-1)(q-4)}{8}.
\end{align*}
This completes the proof. 
\end{proof}

\begin{lemma}\label{lem-mar92} 
Let $a\in \mathrm{GF}(q)^*$ and  $f(x)= x^d \in \mathrm{GF}(q)[x]$ be an o-monomial. 
For any $u_1, u_2, u_3 \in \mathrm{GF}(q)$ with $(u_1+u_2)(u_2+u_3)(u_3+u_1)  \neq  0$, define
 \begin{align*}
 I_a(u_1,u_2,u_3)=\left \{(b,c)\in \mathrm{GF}(q)^2: b \neq 0, \{u_1, u_2, u_3\} \subseteq  V(af(x)+bx+c) \right \}.
 \end{align*}
Then, $ |I_a(u_1,u_2,u_3)| = \frac{q(q-4)}{8}$. 
\end{lemma}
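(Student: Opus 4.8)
The plan is to mimic the double-counting argument of Lemma~\ref{lem-mar91}, but now with the coefficient $a$ fixed. First I would define
\begin{align*}
T_a=\left\{(b,c,x_1,x_2,x_3)\in\mathrm{GF}(q)^5: af(x_i)+bx_i+c=u_i\ (i=1,2,3)\right\}
\end{align*}
and compute $|T_a|$ in two ways, as a sum of $J_a(b,c)$ over $(b,c)\in\mathrm{GF}(q)^2$ and as a sum of $K_a(x_1,x_2,x_3)$ over $(x_1,x_2,x_3)\in\mathrm{GF}(q)^3$. The point is that for a fixed $a\in\mathrm{GF}(q)^*$, the value set size of $g(x)=af(x)+bx+c$ is $q/2$ when $b\neq 0$ (since $f$ is an o-polynomial, hence a d-polynomial, so $g$ is $2$-to-$1$), and is $q$ when $b=0$ (since then $g(x)=ax^d+c$ is a permutation, as $x^d$ is an o-monomial and $a\neq 0$). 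Thus $v(g)$ is never $1$ here, unlike the previous lemma, which simplifies the bookkeeping.

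Next I would evaluate $J_a(b,c)$, the number of triples $(x_1,x_2,x_3)$ with $af(x_i)+bx_i+c=u_i$. When $\{u_1,u_2,u_3\}\not\subseteq V(g)$ this is $0$; when $\{u_1,u_2,u_3\}\subseteq V(g)$ and $v(g)=q$ (i.e. $b=0$) it is $1$; and when $\{u_1,u_2,u_3\}\subseteq V(g)$ and $v(g)=q/2$ (i.e. $b\neq 0$) it is $8$, since each $u_i$ has exactly two preimages. The number of $(b,c)$ with $b=0$ and $\{u_1,u_2,u_3\}\subseteq V(ax^d+c)$ is exactly $q$: for each $c$ the map $x\mapsto ax^d+c$ is a bijection, so all of $\mathrm{GF}(q)$, in particular $u_1,u_2,u_3$, lies in the value set. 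Hence
\begin{align*}
|T_a|=q+8|I_a(u_1,u_2,u_3)|.
\end{align*}

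For the other count, I would use the hyperoval geometry exactly as in Lemma~\ref{lem-mar91}: the points $(f(x_1),x_1,1)$, $(f(x_2),x_2,1)$, $(f(x_3),x_3,1)$ of the hyperoval $\cH(f)$ are linearly independent over $\mathrm{GF}(q)$ whenever $x_1,x_2,x_3$ are pairwise distinct, so the $3\times 3$ linear system in $(a,b,c)$ has a unique solution — and in particular, for our fixed value of $a\neq 0$, the system $af(x_i)+bx_i+c=u_i$ determines $(b,c)$ uniquely provided that unique solution has the prescribed first coordinate. Here the argument needs the slight extra observation that, because the coefficient vector $(a,b,c)$ is uniquely determined by the three equations, scaling is not free; one checks that the solution's $a$-coordinate is a fixed nonzero linear function of $(u_1,u_2,u_3)$, and the genuinely clean route is to absorb $a$ by noting $af(x)+bx+c=u_i$ is equivalent to $f(x)+(b/a)x+(c/a)=u_i/a$, reducing to counting, within the triple-count of Lemma~\ref{lem-mar91}'s set with parameters $(u_1/a,u_2/a,u_3/a)$, those with first coordinate $1$; by the uniqueness just noted, $K_a(x_1,x_2,x_3)$ is $1$ when $|\{x_1,x_2,x_3\}|=3$ and $0$ otherwise, so $|T_a|=q(q-1)(q-2)$. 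Combining the two expressions gives $|I_a(u_1,u_2,u_3)|=\frac18\bigl(q(q-1)(q-2)-q\bigr)=\frac{q(q-4)}{8}$, after checking $(q-1)(q-2)-1=q(q-4)/\,$... indeed $q^2-3q+2-1=q^2-3q+1$, which does \emph{not} equal $q(q-4)=q^2-4q$; so the correct bookkeeping must instead give $|T_a|=q(q-1)(q-2)$ and $|T_a|=q(q-1)+8|I_a|$, i.e. the $b=0$ contribution is $q(q-1)$ rather than $q$ — I would recount the $b=0$ case carefully, and I expect that recount (determining exactly how many $(b,c)$ with $b=0$ satisfy the containment, and equivalently how $K_a$ behaves) to be the one delicate point, after which $|I_a|=\frac18(q(q-1)(q-2)-q(q-1))=\frac{q(q-1)(q-4)}{8}\cdot\frac{1}{q-1}$... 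The genuine main obstacle is pinning down the precise ``degenerate'' contribution so that the arithmetic closes to $\frac{q(q-4)}{8}$; everything else is a routine transcription of the proof of Lemma~\ref{lem-mar91}.
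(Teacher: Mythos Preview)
Your approach diverges from the paper's, and the divergence is precisely where the argument breaks. The paper does not redo the double count with $a$ fixed; instead it exploits the monomial structure via the substitution $x\mapsto a^{1/d}x$ (possible because $\gcd(d,q-1)=1$) to obtain
\[
V(ax^d+bx+c)=V\bigl(x^d+ba^{-1/d}x+c\bigr),
\]
hence a bijection $I_a(u_1,u_2,u_3)\to I_1(u_1,u_2,u_3)$ for the \emph{same} triple $(u_1,u_2,u_3)$. Since $I(u_1,u_2,u_3)=\bigsqcup_{a\neq 0}\{a\}\times I_a(u_1,u_2,u_3)$, this yields $|I_a|=|I|/(q-1)=q(q-4)/8$ in one stroke from Lemma~\ref{lem-mar91}.

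Your double count fails at the $K_a$ step. With $a$ fixed, the system $af(x_i)+bx_i+c=u_i$ ($i=1,2,3$) is three affine equations in the \emph{two} unknowns $(b,c)$; for a generic pairwise-distinct triple $(x_1,x_2,x_3)$ it is overdetermined and has no solution, so $K_a(x_1,x_2,x_3)=0$, not $1$. The hyperoval independence argument from Lemma~\ref{lem-mar91} only tells you that the full $3\times 3$ system in $(a,b,c)$ has a unique solution; whether that solution's first coordinate happens to equal your prescribed $a$ depends on all of $x_1,x_2,x_3,u_1,u_2,u_3$, and counting how often it does is no easier than the lemma itself. This is why neither of your arithmetic attempts closes: the claim $|T_a|=q(q-1)(q-2)$ is simply false (in fact $|T_a|=q(q-3)$). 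Your first count $|T_a|=q+8|I_a|$ is correct, for what it is worth.

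Your ``divide by $a$'' remark gives only $|I_a(u_1,u_2,u_3)|=|I_1(u_1/a,u_2/a,u_3/a)|$, valid for any o-polynomial, which reduces to a different instance of the same unproved lemma; it does not show $|I_a|$ is independent of $a$ for fixed $(u_1,u_2,u_3)$. That independence is exactly what the monomial substitution $x\mapsto a^{1/d}x$ buys, and it is the one idea your proposal is missing.
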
 

\begin{proof}
Recall that $f(x)=x^d$ is a permutation of $\mathrm{GF}(q)$. We have then 
\begin{align*}
V(af(x)+bx+c)=& \{a x^d +bx +c: x\in \mathrm{GF}(q)\}\\
=& \{ (a^{d^{-1}}x)^d + ba^{-d^{-1}} (a^{d^{-1}}x) +c: x \in \mathrm{GF}(q)\}\\
=& V(x^d+ ba^{-d^{-1}} x +c),
\end{align*}
where $d^{-1}$ is a positive integer such that  $d d^{-1} \equiv 1 \pmod {q-1}$.
Thus, $(b, c) \longmapsto ( ba^{-d^{-1}}, c)$ induces a bijective mapping from $I_a(u_1,u_2,u_3)$ to $I_1(u_1,u_2,u_3)$.
Then, $ | I_a(u_1,u_2,u_3) | = | I_1(u_1,u_2,u_3) |$.
We then deduce by Lemma \ref{lem-mar91} that 
\begin{align*}
| I_a(u_1,u_2,u_3) | =\frac{1}{q-1} | I(u_1,u_2,u_3)|=\frac{q(q-4)}{8}.
\end{align*}
This completes the proof. 
\end{proof}  

We are now ready to prove the following result, which is one of the main results of this paper. 

\begin{theorem}\label{thm-mar901}
Let $f(x)=x^e$ be an o-monomial over $\gf(q)$. 
Then $\bD(f, q/2):=(\gf(q), \cB_{(f,q/2)})$ is a $3$-$(q, q/2, q(q-4)/8\mu)$ design, where 
$$ 
\mu = \left| \Stab_{\AG_1(\gf(q))}(J_e) \right|  
    = |\{(u, v) \in \gf(q)^* \times \gf(q): \ uJ_e+v=J_e \}|
$$ 
and 
\begin{eqnarray}\label{eqn-Je2}
J_e=\{y^e+y: y \in \gf(q)\}. 
\end{eqnarray} 
\end{theorem}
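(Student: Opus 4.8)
The plan is to realize $\bD(f,q/2)$ as the incidence structure whose point set is $\gf(q)$ and whose block set is the orbit of the single set $J_e = \{y^e+y : y \in \gf(q)\}$ under the group $\AG_1(\gf(q))$ of affine transformations $y \mapsto uy+v$, and then to verify the two hypotheses of the standard orbit-counting criterion for designs (Proposition 4.6 in \cite[p.~175]{BJL}, as already invoked for Theorem \ref{thm-main2design1}): namely that $\AG_1(\gf(q))$ is $3$-homogeneous (indeed $3$-transitive, in characteristic $2$) on $\gf(q)$, and that $\AG_1(\gf(q))$ permutes the blocks among themselves. Once both are checked, the proposition yields a $3$-$(q,k,\lambda)$ design with $k = |J_e|$ and $\lambda$ equal to (number of blocks through a fixed $3$-subset), and the bulk of the work is the bookkeeping that identifies these two parameters with the claimed values.

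First I would record the block description. Since $f(x)=x^e$ is an o-monomial it is a permutation polynomial with $\gcd(e,q-1)=1$, so by the argument in the proof of Lemma \ref{lem-381} (substitute $x = u^{1/e}y$) one gets $V(af(x)+bx+c) = a\cdot(u^{e-1}\text{-rescaled }J_e) + c$ up to an affine change, and more precisely $B_{(f,b,c)} = \{x^e+bx+c : x\in\gf(q)\}$ is an affine image $\alpha J_e + \beta$ of $J_e$ whenever $b\neq 0$; by Theorem \ref{thm-opoly2to1} every such set has size exactly $q/2$, and these are precisely the sets contributing to $\cB_{(f,q/2)}$ (the cases $b=0$ give either a singleton or a permutation image of size $q$, hence are excluded). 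Thus $\cB_{(f,q/2)} = \{\alpha J_e + \beta : (\alpha,\beta)\in\gf(q)^*\times\gf(q)\}$ is a single $\AG_1(\gf(q))$-orbit, which immediately gives the block-stabilization hypothesis and shows $k = |J_e|$. The total number of blocks is then $b = |\gf(q)^* \times \gf(q)| / \mu = q(q-1)/\mu$ where $\mu = |\Stab_{\AG_1(\gf(q))}(J_e)|$, by the orbit–stabilizer theorem.

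Next I would count, for a fixed $3$-subset $\{u_1,u_2,u_3\}$ of $\gf(q)$ (so $(u_1+u_2)(u_2+u_3)(u_3+u_1)\neq 0$ automatically in characteristic $2$), the number $\lambda$ of blocks containing it. A block $\alpha J_e+\beta$ contains $\{u_1,u_2,u_3\}$ iff the corresponding pair $(b,c)$ with $f_{1}:=x^e+bx+c$ satisfies $\{u_1,u_2,u_3\}\subseteq V(x^e+bx+c)$, i.e.\ $(b,c)\in I_1(u_1,u_2,u_3)$ in the notation of Lemma \ref{lem-mar92} with $a=1$. That lemma gives $|I_1(u_1,u_2,u_3)| = q(q-4)/8$. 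But distinct pairs $(b,c)$ can define the same block: the map $(b,c)\mapsto B_{(x^e,b,c)}$ is $\mu$-to-one onto $\cB_{(x^e,q/2)}$ (this is exactly the content of the orbit–stabilizer count, since $(b,c)\leftrightarrow$ the affine map carrying $J_e$ to the block, with the stabilizer acting freely by translation/scaling). Hence $\lambda = |I_1(u_1,u_2,u_3)|/\mu = q(q-4)/(8\mu)$, and the design is $3$-$(q, q/2, q(q-4)/(8\mu))$ as claimed. Finally I would note consistency: $\lambda \cdot \binom{q}{3} = b\cdot\binom{q/2}{3}$ should hold, and one can double-check it against $b = q(q-1)/\mu$, though the design property from the proposition already guarantees $\lambda$ is a well-defined integer independent of the chosen $3$-subset.

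The main obstacle, and the point deserving the most care, is the precise $\mu$-to-one bookkeeping connecting the ``coordinate'' count $|I_1(u_1,u_2,u_3)|$ (over pairs $(b,c)$) to the ``geometric'' count $\lambda$ (over actual blocks). One must argue that the fibers of $(b,c)\mapsto B_{(x^e,b,c)}$ all have the same size $\mu$ — equivalently that $\Stab_{\AG_1(\gf(q))}(J_e)$ acts simply transitively on each fiber — and that this $\mu$ is the same constant appearing in the block-count and that the action of $\Stab$ does not mix up the three marked points $u_1,u_2,u_3$ in a way that changes the count. This is where Lemma \ref{lem-mar92} and the orbit–stabilizer argument must be stitched together cleanly; everything else (the $3$-transitivity of $\AG_1(\gf(q))$ in characteristic $2$, the size $q/2$ of each block from Theorem \ref{thm-opoly2to1}, the affine-image description of blocks) is routine and already essentially in hand from the earlier lemmas.
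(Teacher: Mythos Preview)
Your direct counting argument via Lemma \ref{lem-mar92} is correct and is essentially the paper's approach: show that every pair $(b,c)\in\gf(q)^*\times\gf(q)$ gives a block $B_{(x^e,b,c)}$ equal to an affine image of $J_e$, observe that the resulting map $(b,c)\mapsto B_{(x^e,b,c)}$ is exactly $\mu$-to-one onto $\cB_{(x^e,q/2)}$, and then divide the coordinate count $|I_1(u_1,u_2,u_3)|=q(q-4)/8$ by $\mu$ to get $\lambda$. Since Lemma \ref{lem-mar92} shows $|I_1(u_1,u_2,u_3)|$ is the same for \emph{every} $3$-subset $\{u_1,u_2,u_3\}$, this computation by itself already proves the $3$-design property.

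However, the framing you put around this computation is wrong. You claim that $\AG_1(\gf(q))$ is $3$-homogeneous (``indeed $3$-transitive, in characteristic $2$'') on $\gf(q)$, and plan to invoke Proposition~4.6 of \cite{BJL} at the $t=3$ level. This is false: $\AG_1(\gf(q))$ has order $q(q-1)$ and acts \emph{sharply} $2$-transitively on $\gf(q)$, so for $q\ge 8$ it cannot be transitive on the $\binom{q}{3}=q(q-1)(q-2)/6$ unordered triples. (You may be thinking of $\PGL_2(q)$ acting on the projective line $\gf(q)\cup\{\infty\}$; $\AG_1(\gf(q))$ is only the point-stabiliser of $\infty$ in that action.) Consequently the orbit-criterion route cannot, by itself, deliver a $3$-design here --- and indeed this is precisely why the paper (and you, in your second paragraph) resorts to the explicit triple count from Lemmas \ref{lem-mar91} and \ref{lem-mar92}. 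Drop the appeal to $3$-transitivity entirely; the direct count is both necessary and sufficient, and your bookkeeping for the $\mu$-to-one fibres is fine.
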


\begin{proof}
We follow the notation of Lemmas \ref{lem-mar91} and \ref{lem-mar92} and their proofs. 
By the definition of o-polynomials, we have $\gcd(e(e-1), q-1)=1$. Define the following 
multiset: 
$$ 
\bar{\cB}_{(f,q/2)} = \{\{ \{x^e+bx+c: x \in \gf(q)\}: b \in \gf(q)^*, c \in \gf(q)\}\} 
$$ 
By the proof of Lemma \ref{lem-381}, 
$$ 
\bar{\cB}_{(f,q/2)} = \{\{ b J_e + c: b \in \gf(q)^*, c \in \gf(q)\}\} 
$$ 
and 
$$ 
\cB_{(f,q/2)} = \{ b J_e + c: b \in \gf(q)^*, c \in \gf(q)\}.  
$$ 
Clearly, the general affine group $\AG_1(\gf(q))$ fixes both $\bar{\cB}_{(f,q/2)}$ 
and $\cB_{(f,q/2)}$. The stabilizer of $J_e$ under $\AG_1(\gf(q))$ is defined by 
$$ 
\Stab_{\AG_1(\gf(q))}(J_e)=\{ux+v: (u,v) \in \gf(q)^* \times \gf(q), uJ_e+v=J_e\}.  
$$   
We then deduce that 
$$ 
|\cB_{(f,q/2)}|= \frac{(q-1)q}{|\Stab_{\AG_1(\gf(q))}(J_e)|}. 
$$ 

Note that 
$$ 
V(x^e+bx+c) = V(b^{e/(e-1)}(x^e+x)+c). 
$$
Consequently, the multiset 
$$ 
\{\{ V(x^e+bx+c): (b,c) \in I_1(u_1,u_2,u_3) \}\} 
$$ 
is the same as the multiset 
$$ 
|\Stab_{\AG_1(\gf(q))}(J_e)| \{\{ B_{(f, b, c)} \in \cB_{(f,q/2)}: \{u_1, u_2,u_3\} \subset B_{(f, b, c)} \}\},  
$$ 
where $\{u_1, u_2, u_3\}$ is a set of three distinct elements in $\gf(q)$, 
and $I_a(u_1, u_2, u_3)$ was defined in Lemmas \ref{lem-mar91} and \ref{lem-mar92}. 
It then follows that $(\gf(q), \bar{\cB}_{(f,q/2)})$ is a $t$-$(q, q/2, \lambda)$ design 
if and only if $(\gf(q), \cB_{(f,q/2)})$ is a $t$-$(q, q/2, \lambda/\mu)$ design, where 
$\mu$ was defined earlier. 

By Lemma \ref{lem-mar92}, $(\gf(q), \bar{\cB}_{(f,q/2)})$ is a $3$-$(q, q/2, q(q-4)/8)$ 
design, which may contain repeated blocks. As a result, $(\gf(q), \cB_{(f,q/2)})$ is a 
$3$-$(q, q/2, q(q-4)/8\mu)$ simple design.   
\end{proof}

Theorem \ref{thm-mar901} says that every o-monomial $x^e$ supports a $3$-design 
$\bD(x^e, q/2)$. The determination of the parameters of 
the $3$-design boils down to that of the size $\mu$ of the stabiliser of 
the block $J_e$ under the action of $\GA_1(\gf(q))$.

The following is a corollary of Theorem \ref{thm-mar901}. We give a direct proof 
of it below. 

\begin{corollary}\label{cor-mar93}
Let $f(x)=x^e$ be an o-monomial over $\gf(q)$ such that $|\cB_{(f,q/2)}|=(q-1)q$. 
Then $\bD(f, q/2):=(\gf(q), \cB_{(f,q/2)})$ is a $3$-$(q, q/2, q(q-4)/8)$ design. 
\end{corollary}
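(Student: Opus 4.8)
The plan is to derive Corollary~\ref{cor-mar93} directly from Lemma~\ref{lem-mar92} by the standard double-counting argument for $t$-designs, specialised to the case where the block set already has the maximal size $(q-1)q$, so that no stabiliser correction is needed. First I would record that $f(x)=x^e$ being an o-monomial forces $\gcd(e(e-1),q-1)=1$ (Theorem~\ref{thm-opoly2to1} together with Lemma~\ref{lem-381}), so that, exactly as in the proof of Lemma~\ref{lem-381}, every $B_{(f,b,c)}$ with $b\in\gf(q)^*$ equals $b^{e/(e-1)}J_e+c$ and has cardinality $q/2$; hence $\cB_{(f,q/2)}=\{\,bJ_e+c:(b,c)\in\gf(q)^*\times\gf(q)\,\}$, the blocks of size $q/2$ are precisely these affine images of $J_e$, and the point set is $\gf(q)$ with $v=q$, $k=q/2$.

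Next I would fix three pairwise distinct points $u_1,u_2,u_3\in\gf(q)$ (so $(u_1+u_2)(u_2+u_3)(u_3+u_1)\neq 0$ since the characteristic is $2$) and count the pairs $(b,c)$ with $b\neq0$ and $\{u_1,u_2,u_3\}\subseteq V(x^e+bx+c)=B_{(f,b,c)}$; by Lemma~\ref{lem-mar92} (applied with $a=1$, using $|I_1(u_1,u_2,u_3)|$) this number is $q(q-4)/8$. The only thing left is to see that this count of index pairs coincides with the number of blocks of $\cB_{(f,q/2)}$ containing $\{u_1,u_2,u_3\}$. This is where the hypothesis $|\cB_{(f,q/2)}|=(q-1)q$ enters: the map $(b,c)\mapsto B_{(f,b,c)}=b^{e/(e-1)}J_e+c$ from $\gf(q)^*\times\gf(q)$ onto $\cB_{(f,q/2)}$ is then a bijection (domain and codomain both have size $(q-1)q$), so distinct $(b,c)$ give distinct blocks and the map is injective on the triple-containing pairs too. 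Therefore every $3$-subset of $\gf(q)$ lies in exactly $q(q-4)/8$ blocks, which is precisely the assertion that $\bD(f,q/2)$ is a $3$-$(q,q/2,q(q-4)/8)$ design.

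The substantive input is Lemma~\ref{lem-mar92}, which in turn rests on the fact that the three hyperoval points $(f(x_i),x_i,1)$ are linearly independent over $\gf(q)$; the only genuinely new point to watch is the bijectivity claim, i.e.\ that $|\cB_{(f,q/2)}|=(q-1)q$ exactly means the parametrisation $(b,c)\mapsto bJ_e+c$ has trivial fibres, equivalently $\Stab_{\AG_1(\gf(q))}(J_e)$ is trivial, so that $\mu=1$ and Theorem~\ref{thm-mar901} specialises to the stated $\lambda=q(q-4)/8$. I expect this last bookkeeping step — translating the cardinality hypothesis into injectivity of the block parametrisation and hence into the absence of an overcounting factor — to be the only place requiring care; everything else is a direct appeal to the two preceding lemmas.
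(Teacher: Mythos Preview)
Your proposal is correct and follows essentially the same approach as the paper: establish that every $B_{(f,b,c)}$ with $b\neq 0$ has size $q/2$, use the hypothesis $|\cB_{(f,q/2)}|=(q-1)q$ to conclude that distinct parameters $(b,c)$ give distinct blocks, and then read off $\lambda=q(q-4)/8$ from Lemma~\ref{lem-mar92} with $a=1$. The paper's version is terser (it cites Theorem~\ref{thm-opoly2to1} directly for the block size rather than going through the $J_e$ representation, and omits the explicit link to $\mu=1$ in Theorem~\ref{thm-mar901}), but the logical skeleton is identical.
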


\begin{proof}
It follows from Theorem \ref{thm-opoly2to1} that $|B_{(f,b,c)}|=q/2$ for all 
$(b, c) \in \gf(q)^* \times \gf(q)$. By assumption, all blocks $B_{(f,b,c)}$ 
with $(b, c) \in \gf(q)^* \times \gf(q)$ are pairwise distinct. The design 
property then follows from Lemma \ref{lem-mar92}.  
\end{proof} 

Only o-monomials support $3$-designs with respect to this construction. 
O-polynomials do not support $3$-designs in general, but do support $2$-designs 
with respect to this construction. Below we prove this general result. To this end, we need prove 
the next two auxiliary results. 

\begin{lemma}\label{lem-mar94} 
 Let $f(x)\in \gf(q)[x]$ be an o-polynomial. For any $u_1, u_2 \in \mathrm{GF}(q)$ with $ u_1 \neq u_2$, define
 \begin{align*}
 I(u_1,u_2)=\left \{(b,c)\in \mathrm{GF}(q)^2: b\neq 0, \{u_1, u_2\} \subseteq  V(f(x)+bx+c) \right \}.
 \end{align*}
Then, $| I(u_1,u_2) | = \frac{q(q-2)}{4}$. 
\end{lemma}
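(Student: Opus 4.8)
The plan is to mimic the double-counting scheme already used in the proof of Lemma \ref{lem-mar91}, but now with two prescribed values $u_1, u_2$ instead of three. First I would introduce the set
\begin{align*}
T=\left\{(b,c,x_1,x_2)\in \gf(q)^4: f(x_i)+bx_i+c=u_i \ (i=1,2)\right\}
\end{align*}
and count $|T|$ in two ways, summing over $(b,c)$ on one side and over $(x_1,x_2)$ on the other. For the first count I would reuse the value-set dichotomy established in the proof of Lemma \ref{lem-mar91}: writing $g(x)=f(x)+bx+c$, we have $v(g)=q$ when $b=0$ (a permutation, since $f$ is an o-polynomial with $c$ a shift) and $v(g)=q/2$ when $b\neq 0$ (here $f_b=f(x)+bx$ is $2$-to-$1$ by Theorem \ref{thm-opoly2to1}). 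Accordingly, the number $J(b,c)$ of pairs $(x_1,x_2)$ with $g(x_i)=u_i$ is $1$ when $b=0$ and $\{u_1,u_2\}\subseteq V(g)$ (exactly one preimage each), is $4$ when $b\neq 0$ and $\{u_1,u_2\}\subseteq V(g)$ (two preimages each), and is $0$ otherwise. Summing gives $|T| = |\{c: \{u_1,u_2\}\subseteq V(f(x)+c)\}| + 4|I(u_1,u_2)|$; since $f(x)+c$ is a permutation, $\{u_1,u_2\}\subseteq V(f(x)+c)$ for every $c$, so the first term is exactly $q$. Hence $|T| = q + 4|I(u_1,u_2)|$.

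For the second count I would fix a pair $(x_1,x_2)$ and count the number $K(x_1,x_2)$ of pairs $(b,c)$ solving the linear system $bx_i+c=u_i-f(x_i)$, $i=1,2$. When $x_1\neq x_2$ this $2\times 2$ system (unknowns $b,c$; Vandermonde-type matrix with distinct nodes $x_1,x_2$) has a unique solution, so $K(x_1,x_2)=1$; when $x_1=x_2$, we have $u_1-f(x_1)=u_2-f(x_2)$, which is impossible since $u_1\neq u_2$, so $K=0$. Therefore $|T| = \sum_{x_1\neq x_2} 1 = q(q-1)$. Equating the two expressions for $|T|$ yields $q(q-1) = q + 4|I(u_1,u_2)|$, i.e. $4|I(u_1,u_2)| = q(q-1) - q = q(q-2)$, so $|I(u_1,u_2)| = q(q-2)/4$, as claimed.

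I expect this argument to be almost entirely routine; there is no serious obstacle, because the case $t=2$ is strictly easier than the $t=3$ case of Lemma \ref{lem-mar91} (in particular one does not need to invoke the no-three-collinear property of the hyperoval, only that distinct $x_1,x_2$ give a nonsingular interpolation system). The one point to state carefully is why $\{u_1,u_2\}\subseteq V(f(x)+c)$ holds for all $c$: this is simply because $f(x)+c$ ranges over all of $\gf(q)$, being a permutation, so its value set is all of $\gf(q)$. Everything else is bookkeeping identical in structure to the proof of Lemma \ref{lem-mar91}, and I would present it in the same format for consistency.
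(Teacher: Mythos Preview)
Your proposal is correct and follows essentially the same double-counting argument as the paper's own proof: the same auxiliary set $T$, the same split into $J(b,c)$ and $K(x_1,x_2)$, and the same arithmetic $|T|=q+4|I(u_1,u_2)|=q(q-1)$. Your write-up is in fact slightly more detailed in justifying $K(x_1,x_2)=1$ for $x_1\neq x_2$ and the permutation step for $b=0$, but the route is identical.
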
 

\begin{proof}
Set
\begin{align*}
T=\left \{(b,c, x_1,x_2)\in \mathrm{GF}(q)^4:f(x_i)+bx_i+c=u_i \ (i=1,2) \right  \}.
\end{align*}
Then
\begin{align*}
|T| = \sum_{(b,c)\in \mathrm{GF}(q)^2} J(b,c)= \sum_{(x_1, x_2)\in \mathrm{GF}(q)^2} K(x_1,x_2),
\end{align*}
where
\begin{align*}
J(b,c)=|\left  \{(x_1, x_2)\in \mathrm{GF}(q)^2 : f(x_i)+bx_i+c=u_i \ (i=1,2)\right \}|,
\end{align*}
and 
\begin{align*}
K(x_1,x_2)=|\left  \{(b, c)\in \mathrm{GF}(q)^2 : f(x_i)+bx_i+c=u_i \  (i=1,2)\right \}|.
\end{align*} 
For $J(b,c)$, we have 
\begin{align*}
J(b,c)=\begin{cases}
0, & \{u_1, u_2\} \not \subseteq V(g),\\
 1, & \{u_1, u_2\}  \subseteq V(g) \text{ and } v(g)=q,\\
 4, & \{u_1, u_2\}  \subseteq V(g) \text{ and } v(g)=q/2,
\end{cases}
\end{align*}
where $g=f(x)+bx+c$.\\
Note that
\begin{align*}
v(f(x)+bx+c)=\begin{cases}
 q, & b=0,\\
 q/2, & b\neq 0. 
\end{cases}
\end{align*}
We have
\begin{align*}
|T|= & \sum_{(b,c)\in \mathrm{GF}(q)^2} J(b,c)\\
=& | \{(b,c) \in \mathrm{GF}(q)^2 : v(f(x)+bx+c)=q\}| + 4 |I(u_1,u_2)| \\
=& q + 4 | I(u_1,u_2)|.
\end{align*} 
For $K(x_1,x_2)$, we have 
\begin{align*}
K(x_1,x_2)=\begin{cases}
0, &  x_1=x_2,\\
1, & x_1\neq x_2.
\end{cases}
\end{align*}
Thus,
\begin{align*}
|T| = & \sum_{(x_1, x_2)\in \mathrm{GF}(q)^2} K(x_1,x_2)= q(q-1).
\end{align*}
Finally, 
\begin{align*}
I(u_1,u_2)=\frac{1}{4}\left ( q(q-1) -q \right ) =\frac{q(q-2)}{4}.
\end{align*}
This completes the proof. 
\end{proof}

Another major result of this paper is the following. 

\begin{theorem}\label{thm-mar95}
Let $f(x)$ be an o-polynomial over $\gf(q)$ such that $|\cB_{(f,q/2)}|=(q-1)q$. 
Then $\bD(f, q/2):=(\gf(q), \cB_{(f,q/2)})$ is a $2$-$(q, q/2, q(q-2)/4)$ design. 
\end{theorem}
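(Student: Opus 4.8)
The plan is to mirror the proof of Theorem~\ref{thm-mar901}, replacing the triple-point count of Lemma~\ref{lem-mar92} with the pair-point count of Lemma~\ref{lem-mar94}. First I would record that, since $f$ is an o-polynomial, Theorem~\ref{thm-opoly2to1} gives $|B_{(f,b,c)}|=q/2$ for every $(b,c)\in\gf(q)^*\times\gf(q)$, so that the family
\begin{align*}
\cB_{(f,q/2)}=\{B_{(f,b,c)}: b\in\gf(q)^*,\ c\in\gf(q)\}
\end{align*}
consists exactly of these $(q-1)q$ sets, and by the hypothesis $|\cB_{(f,q/2)}|=(q-1)q$ they are pairwise distinct; equivalently, the associated multiset $\bar{\cB}_{(f,q/2)}$ equals the set $\cB_{(f,q/2)}$. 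One should also note that the general affine group $\AG_1(\gf(q))$ stabilises this block set, so the point set $\gf(q)$ carries a $2$-homogeneous action preserving $\cB_{(f,q/2)}$; by the standard averaging argument (Proposition~4.6 of \cite[p.~175]{BJL}) it suffices to count, for one fixed pair $\{u_1,u_2\}$ of distinct points, the number of blocks containing it.

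The counting step is then immediate from Lemma~\ref{lem-mar94}: the number of pairs $(b,c)\in\gf(q)^*\times\gf(q)$ with $\{u_1,u_2\}\subseteq V(f(x)+bx+c)$ is $|I(u_1,u_2)|=q(q-2)/4$. Because each block $B_{(f,b,c)}$ arises from a single pair $(b,c)$ (the blocks being pairwise distinct by hypothesis, unlike the monomial case where one must divide by $\mu=|\Stab_{\AG_1(\gf(q))}(J_e)|$), this count is exactly the replication number $\lambda$. Hence every $2$-subset of $\gf(q)$ lies in precisely $q(q-2)/4$ blocks, and $\bD(f,q/2)$ is a $2$-$(q,q/2,q(q-2)/4)$ design. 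I would remark that this is consistent with Corollary~\ref{cor-mar93} restricted to the $2$-level: a $3$-design is in particular a $2$-design, and in the monomial case with $|\cB_{(f,q/2)}|=(q-1)q$ one indeed recovers the same $\lambda$.

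There is no serious obstacle here once Lemma~\ref{lem-mar94} is in hand; the only point demanding care is the reduction from ``homogeneous count over parameters $(b,c)$'' to ``block-replication number'', i.e.\ confirming that distinct parameter pairs give distinct blocks. This is precisely what the hypothesis $|\cB_{(f,q/2)}|=(q-1)q$ buys us: it forces the natural surjection $(b,c)\mapsto B_{(f,b,c)}$ from $\gf(q)^*\times\gf(q)$ onto $\cB_{(f,q/2)}$ to be a bijection, so that the parameter count and the block count coincide with no correction factor. Without this hypothesis one would instead have to pass to the multiset $\bar{\cB}_{(f,q/2)}$, obtain a $2$-design with repeated blocks, and then divide $\lambda$ by the size of the stabiliser of a representative block under $\AG_1(\gf(q))$ — exactly the situation handled for general $e$ in the analogue of Theorem~\ref{thm-mar901}; I would mention this parallel briefly rather than redo it.
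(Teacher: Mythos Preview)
Your proposal is correct and follows essentially the same route as the paper: use Theorem~\ref{thm-opoly2to1} to get block size $q/2$, use the hypothesis $|\cB_{(f,q/2)}|=(q-1)q$ to get pairwise distinctness of the $B_{(f,b,c)}$, and then invoke Lemma~\ref{lem-mar94} for the replication number. The only remark is that your appeal to the $2$-transitive action of $\AG_1(\gf(q))$ and Proposition~4.6 of \cite{BJL} is superfluous here, since Lemma~\ref{lem-mar94} already establishes $|I(u_1,u_2)|=q(q-2)/4$ for \emph{every} pair $\{u_1,u_2\}$, not just one; the paper's proof accordingly skips that step.
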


\begin{proof}
It follows from Theorem \ref{thm-opoly2to1} that $|B_{(f,b,c)}|=q/2$ for all 
$(b, c) \in \gf(q)^* \times \gf(q)$. By assumption, all blocks $B_{(f,b,c)}$ 
with $(b, c) \in \gf(q)^* \times \gf(q)$ are pairwise distinct. The design 
property then follows from Lemma \ref{lem-mar94}.  
\end{proof}  

Regarding Theorem \ref{thm-mar95}, one basic question is which of the known o-polynomials 
satisfy $|\cB_{(f,q/2)}|=q(q-1)/2$. It will be shown later that $|\cB_{(f,q/2)}|=2(q-1)$ for translation o-monomials $x^{2^h}$ and their variants $(ax)^{2^h}$. For other 
o-polynomials, we have the following conjecture, which is strongly supported by experimental 
data. 

\begin{conj} 
Let $f(x)$ be any o-polynomial over $\gf(q)$ such that $f(x) \neq (ax)^{2^h}$  
for all $a \in \gf(q)^*$ and all $h$ with $1 \leq h <m$ and $\gcd(h,m)=1$. 
Then $|\cB_{(f,q/2)}|=q(q-1)$. 
\end{conj}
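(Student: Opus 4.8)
The plan is to translate the statement into a rigidity question about the value sets $V_b:=V(f(x)+bx)$, $b\in\gf(q)^\ast$. By Theorem~\ref{thm-opoly2to1}, $f+bx$ is $2$-to-$1$ for every $b\neq 0$, so $|B_{(f,b,c)}|=q/2$ for all $(b,c)\in\gf(q)^\ast\times\gf(q)$; thus exactly $q(q-1)$ ordered pairs $(b,c)$ contribute blocks, and a direct check gives $B_{(f,b,c)}=c+V_b$. Hence $|\cB_{(f,q/2)}|=q(q-1)$ is equivalent to injectivity of $(b,c)\mapsto c+V_b$, which splits into: (i) the additive stabiliser $\Stab(V_b):=\{p\in\gf(q):p+V_b=V_b\}$ is trivial for every $b\in\gf(q)^\ast$; and (ii) $V_b$ and $V_{b'}$ are not translates of one another whenever $b\neq b'$. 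For the excluded polynomials $f(x)=(ax)^{2^h}$ the map $f+bx$ becomes $\F_2$-linear after a multiplicative substitution, so $V_b$ is a constant multiple of the trace-zero hyperplane; then $\Stab(V_b)=V_b$ has size $q/2$, (i) fails, and one recovers the known value $|\cB_{(f,q/2)}|=2(q-1)$. So the task is to show that for every other o-polynomial both (i) and (ii) hold.

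The natural tool is the additive Fourier transform. Write $\chi_a(z)=(-1)^{\tr(az)}$ and $W_f(a,b)=\sum_{x\in\gf(q)}(-1)^{\tr(af(x))+\tr(bx)}$. Since $f+bx$ is $2$-to-$1$ onto $V_b$, one has $\sum_{v\in V_b}\chi_a(v)=\frac{1}{2}W_f(a,ab)$ for every nonzero $a$. Comparing Fourier coefficients of indicator functions then turns (i) and (ii) into: $p\in\Stab(V_b)$ iff $W_f(a,ab)=0$ for all $a$ with $\tr(ap)=1$; and $V_{b'}=d+V_b$ iff $W_f(a,ab')=(-1)^{\tr(ad)}W_f(a,ab)$ for all $a$. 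The goal is now to prove that any such coincidence forces $f$ to be an $\F_2$-affine map, and then to invoke the classification of linearised o-polynomials, which are exactly the monomials $\lambda x^{2^h}$ with $\gcd(h,m)=1$ — precisely the excluded family. The degenerate direction is easy: if $f$ is linearised then $W_f$ factors through a product of sign characters and both rigidity identities visibly hold, reflecting the collapse to $2(q-1)$ blocks.

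The main obstacle is the reverse implication. A nonzero $p\in\Stab(V_b)$ produces a fixed-point-free permutation $\sigma$ of $\gf(q)$ lifting ``translation by $p$'' through $f+bx$, i.e.\ $f(\sigma x)+f(x)=b(x+\sigma x)+p$ for all $x$; if one could always take $\sigma$ to be a translation $x\mapsto x+s$, it would follow that $f(x+s)+f(x)$ is constant, hence $f$ is $\F_2$-affine, and (i) would be settled — but there is no apparent reason the lift must be affine, and the same difficulty obstructs (ii). In effect the conjecture requires a structural classification of the o-polynomials for which some $f+bx$ has an ``abnormally additive'' image, and at present there is no family-uniform handle on the value sets of general o-polynomials: the known constructions (Segre, Glynn~I/II, Cherowitzo, Payne, Subiaco, \dots) share no common structure, so for each of them the claim can in principle be verified by ad hoc arguments about $V_b$ or by direct computation, but a general proof seems out of reach. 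This is why the statement is left as a conjecture; it has been confirmed by exhaustive computation for small~$m$.
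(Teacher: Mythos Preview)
Your reading is correct: the statement is a \emph{conjecture} in the paper, not a theorem, and the paper offers no proof — only the remark that it is ``strongly supported by experimental data.'' So there is no ``paper's own proof'' to compare against; your proposal is an honest analysis of why the statement is hard, and you correctly conclude that it remains open.

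Your reformulation is sound. The reduction to (i) triviality of the additive stabiliser of each $V_b$ and (ii) non-translatability of distinct $V_b$'s is exactly the right decomposition, and the Fourier identity $\sum_{v\in V_b}\chi_a(v)=\tfrac12 W_f(a,ab)$ is a clean way to encode both conditions. Your identification of the obstruction — that a lift $\sigma$ of translation-by-$p$ through the $2$-to-$1$ map $f+bx$ need not itself be affine — pinpoints precisely where a general argument breaks down. The paper makes no attempt at this level of analysis; it simply records the conjecture and moves on. One small addendum: the paper does establish the analogous rigidity statement (trivial stabiliser of $J_e$ under the full affine group) for the Segre and Glynn~II o-\emph{monomials} via Walsh-spectrum information specific to exponents $e=2^i+2^j$ (Lemma~\ref{lem-march147}), which is in the spirit of your Fourier approach but relies on an explicit spectral formula unavailable for general o-polynomials.
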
    

As pointed out earlier, o-polynomials do not support $3$-designs in general with respect to 
the construction of Section \ref{sec-construct}. However, 
if an o-polynomial $g(x)$ can be expressed as $(ux+v)^e + c$, where $x^e$ is an 
o-monomial, then $g(x)$ does support a $3$-design. For example, $g(x)=x^6+x^4+x^2 
=(x+1)^6 +1$. Since $x^6$ is an o-monomial over $\gf(2^m)$, where $m$ is odd, $g(x)$ 
supports a $3$-design.     

We would make the following comments on $2$-designs $\bD(f, q/2)$ supported by 
o-polynomials $f(x)$ such that $f(x) \neq (ax+b)^e + b^e$ for all o-monomials $y^e$. 
\begin{enumerate}
\item They are not $3$-designs in general. For example, when $m=5$ and $m=7$, 
the Cherowitzo o-polynomial, Payne o-polynomial, and Subiaco o-polynomial support 
only $2$-designs. 
\item The $2$-designs $\bD(f, q/2)$ from these o-polynomials are not affine-invariant, as their 
automorphism groups are smaller than the general affine group $\AG_1(\gf(q))$. 

For example, when $m=5$, the sizes of the automorphism groups of the $2$-designs 
supported by the Cherowitzo o-polynomial, Payne o-polynomial and Subiaco o-polynomial 
are $160$, while $|\AG_1(\gf(q))|=993$.   

\item These $2$-designs $\bD(f, q/2)$ cannot be isomorphic to the hyperoval $2$-designs documented 
in Theorems \ref{thm-hpodesign1} and \ref{thm-hpodesign2}, as their parameters do not 
match. 
\end{enumerate}

For the $3$-designs $\bD(f, q/2)$ supported by o-monomials, we have the following remarks. 
\begin{enumerate}
\item They are not $4$-designs according to Magma experiments. 
\item They are affine-invariant, i.e., $\AG_1(\gf(q))$ is a subgroup of their 
      automorphism groups. Experimental data indicates that their automorphism 
      groups are larger than $\AG_1(\gf(q))$. 
      
      For example, when $m=5$ and $m=7$, the automorphism groups of the $3$-designs supported 
      by the first Glynn o-monomial, second Glynn o-monomial, and the Segre o-monomial 
      have size $q(q-1)m$, while $|\AG_1(\gf(q))|=q(q-1)$. In these two cases, the automorphism 
      groups of these designs are 
      $$ 
      \GaA_1(\gf(q))=\left\{ux^{2^i}+v: (u, v) \in \gf(q)^* \times \gf(q), \ 0 \leq i \leq m-1\right\}. 
      $$ 
      The degree of transitivity of the group $\GaA_1(\gf(q))$ acting on $\gf(q)$ is only 
      $2$, and cannot be used to the prove the $3$-design property of these designs.     
      
      When $m=5$, the automorphism group 
      of the design supported by the translation o-monomial $x^2$ has size $319979520$, 
      while $|\AG_1(\gf(q))|=992$. This is a special and degenerated case, and will be 
      treated shortly.  
\item They are not symmetric designs, as only trivial $3$-designs exist. Only the designs 
      supported by the translation o-monomials are quasi-symmetric. Other $3$-designs have 
      many block intersection numbers according to experimental data.         
\end{enumerate}

\begin{open} 
Find the automorphism groups of the designs $\bD(f, q/2)$ supported by the known o-polynomials 
$f(x)$.  
\end{open}

\subsubsection{The parameters of the $3$-designs from the translation o-monomial $x^{2^h}$} 

Let $\gcd(h, m)=1$. Recall that 
$$ 
J_{2^h}=\{y^{2^h} +y: y \in \gf(q)\}. 
$$ 
Obviously, $J_{2^h}$ is an additive subgroup of $(\gf(q),+)$ with order $q/2$,  

Let $(u, v) \in \gf(q)^* \times \gf(q)$ with $uJ_{2^h}+v=J_{2^h}$. Note that 
$uJ_{2^h}$ is also an additive subgroup of $(\gf(q),+)$ with order $q/2$. It 
then follows that $J_{2^h}+v$ is also an additive subgroup of order $q/2$, 
which forces $v \in J_{2^h}$. Consequently, 
\begin{eqnarray}\label{eqn-mar991}
uJ_{2^h}=J_{2^h}. 
\end{eqnarray}

Let $J_{2^h}^*=J_{2^h} \setminus \{0\}$. It is known that $J_{2^h}^*$ is a 
Singer difference set with parameters $(q-1, (q-2)/2, (q-4)/4)$ in the 
group $(\gf(q)^*, \times)$ (see Theorem \ref{thm-maschietti}). It then follows from (\ref{eqn-mar991}) that 
$u=1$. Consequently, 
$$ 
\Stab_{\AG_1(\gf(q))}(J_{2^h})=\{x+v: v \in J_{2^h}\} 
$$  
and 
$$ 
\mu = |\Stab_{\AG_1(\gf(q))}(J_{2^h})|=q/2. 
$$ 
The following then follows from Theorem \ref{thm-mar901}. 

\begin{corollary}\label{cor-mar905} 
Let $\gcd(h, m)=1$ and $f(x)=x^{2^h}$. Then $\bD(f, q/2):=(\gf(q), \cB_{(f,q/2)})$ is a $3$-$(q, q/2, (q-4)/4)$ design. 
\end{corollary}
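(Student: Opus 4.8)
The plan is to apply Theorem~\ref{thm-mar901} directly, so the whole task reduces to computing the stabiliser size $\mu = |\Stab_{\AG_1(\gf(q))}(J_{2^h})|$ for the specific block $J_{2^h} = \{y^{2^h}+y : y \in \gf(q)\}$, and then substituting $e = 2^h$ into the parameter formula $q(q-4)/(8\mu)$. Since $x^{2^h}$ is an o-monomial over $\gf(q) = \gf(2^m)$ whenever $\gcd(h,m)=1$ (it is the translation o-polynomial), Theorem~\ref{thm-mar901} applies and tells us $\bD(x^{2^h}, q/2)$ is a $3$-$(q, q/2, q(q-4)/(8\mu))$ design; the content of the corollary is precisely the evaluation $\mu = q/2$, which yields $q(q-4)/(8 \cdot q/2) = (q-4)/4$.

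To compute $\mu$, I would first observe that $y \mapsto y^{2^h}+y$ is an $\gf(2)$-linear map on $\gf(q)$ with kernel $\{y : y^{2^h}=y\} = \gf(2^{\gcd(h,m)}) = \gf(2) = \{0,1\}$, so $J_{2^h}$ is an additive subgroup of $(\gf(q),+)$ of order $q/2$ (this is already noted in the excerpt preceding the statement). Now suppose $(u,v) \in \gf(q)^* \times \gf(q)$ satisfies $uJ_{2^h}+v = J_{2^h}$. Since $uJ_{2^h}$ is again an additive subgroup of order $q/2$, the coset $uJ_{2^h}+v$ equals the subgroup $J_{2^h}$ only if $v \in J_{2^h}$ (a coset of a subgroup is the subgroup itself iff the coset representative lies in it), hence $v \in J_{2^h}$ and consequently $uJ_{2^h} = J_{2^h}$. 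So the problem splits: the translation part contributes exactly the $q/2$ elements $v \in J_{2^h}$, and I must show the multiplicative part forces $u = 1$.

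For the multiplicative part, the key input is that $J_{2^h}^* := J_{2^h} \setminus \{0\}$ is a Singer difference set with parameters $(q-1, (q-2)/2, (q-4)/4)$ in the cyclic group $(\gf(q)^*, \times)$ --- this is the Maschietti result cited as Theorem~\ref{thm-maschietti} in the paper. If $u J_{2^h} = J_{2^h}$ with $u \in \gf(q)^*$, then multiplication by $u$ is a multiplier of this difference set that fixes it setwise; equivalently $u J_{2^h}^* = J_{2^h}^*$. A $(q-1, (q-2)/2, (q-4)/4)$-difference set in a cyclic group of order $q-1$ has the property that its only multiplicator of the form ``multiply by a group element'' fixing the set is the identity (a translate $gD = D$ forces $g = 1$ for a nontrivial difference set, since $|D| < q-1$ and $D$ cannot be a union of cosets of a nontrivial subgroup in the relevant parameter range). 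Hence $u = 1$, giving $\Stab_{\AG_1(\gf(q))}(J_{2^h}) = \{x + v : v \in J_{2^h}\}$ and $\mu = q/2$.

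The main obstacle is the last step --- justifying that the only element-multiplier of the Singer difference set $J_{2^h}^*$ fixing it setwise is the identity. In fact this is clean: if $uD = D$ for a $(v,k,\lambda)$-difference set $D$ in a group $G$ with $0 < k < v$, then the cyclic subgroup $\langle u \rangle$ acts on $D$ by translation, so $D$ is a union of right cosets of $\langle u \rangle$; counting differences within such a union forces $\langle u \rangle$ trivial unless $D = G$ or $D = \emptyset$ when $\gcd$ conditions on $v, k, \lambda$ are incompatible --- here $v = q-1$, $k = (q-2)/2$ are coprime for $q$ a power of $2$ (indeed $\gcd(q-1, (q-2)/2) = \gcd(q-1, q-2) = 1$ since consecutive integers are coprime), so $|\langle u \rangle|$ divides both $v$ and $k$, forcing $|\langle u\rangle| = 1$. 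With $\mu = q/2$ in hand, Theorem~\ref{thm-mar901} immediately delivers the stated $3$-$(q, q/2, (q-4)/4)$ design, completing the proof.
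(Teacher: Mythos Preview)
Your proof is correct and follows essentially the same route as the paper: observe that $J_{2^h}$ is an additive subgroup of order $q/2$, deduce from $uJ_{2^h}+v=J_{2^h}$ that $v\in J_{2^h}$ and $uJ_{2^h}=J_{2^h}$, then invoke the Maschietti difference-set result (Theorem~\ref{thm-maschietti}) to force $u=1$, giving $\mu=q/2$ and hence $\lambda=(q-4)/4$ via Theorem~\ref{thm-mar901}. In fact you go a bit further than the paper by spelling out why $uJ_{2^h}^*=J_{2^h}^*$ forces $u=1$ (your $\gcd(q-1,(q-2)/2)=1$ argument), a step the paper simply asserts.
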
 

Note that the number of blocks in the design of Corollary \ref{cor-mar905} is $2(q-1)$. 
Therefore, it is not a symmetric design. It is also well known that nontrivial symmetric 
$3$-designs do not exist. Below we prove that the $3$-design in Corollary \ref{cor-mar905} 
is quasi-symmetric. 

\begin{theorem}
The $3$-design of Corollary \ref{cor-mar905} has two block intersection numbers $0$ and 
$q/4$, and is thus quasi-symmetric.  
\end{theorem}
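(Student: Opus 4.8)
The plan is to describe the blocks of $\bD(f,q/2)$ explicitly and then compute all possible intersection sizes between two of them. By Corollary \ref{cor-mar905} and the preceding discussion, the blocks are exactly the cosets $bJ_{2^h}+c$ with $(b,c) \in \gf(q)^* \times \gf(q)$, and two such cosets $b_1J_{2^h}+c_1$ and $b_2J_{2^h}+c_2$ coincide precisely when $b_1 = b_2$ and $c_1 - c_2 \in J_{2^h}$; so the $2(q-1)$ distinct blocks are indexed by a choice of $b \in \gf(q)^*$ together with a choice of coset of $J_{2^h}$ in $(\gf(q),+)$, of which there are exactly two (since $[\gf(q):J_{2^h}]=2$). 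I will write the two cosets as $J_{2^h}$ and $J_{2^h}+s$ for any fixed $s \notin J_{2^h}$.

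First I would handle the case $b_1 = b_2 = b$. Then the two blocks are $bJ_{2^h}$ and $b(J_{2^h}+s) = bJ_{2^h}+bs$; being distinct cosets of the additive subgroup $bJ_{2^h}$, they are disjoint, giving intersection number $0$. Next I would handle the case $b_1 \neq b_2$. After translating (which does not change intersection cardinalities) it suffices to compute $|(b_1 J_{2^h}) \cap (b_2 J_{2^h} + c)|$ for $b_1 \neq b_2$ and arbitrary $c$. Writing $r = b_1/b_2 \neq 1$ and rescaling by $b_2^{-1}$, this equals $|(r J_{2^h}) \cap (J_{2^h}+c')|$ for suitable $c'$. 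Here the key structural input is that $J_{2^h}^* = J_{2^h}\setminus\{0\}$ is a Singer difference set with parameters $(q-1,(q-2)/2,(q-4)/4)$ in $(\gf(q)^*,\times)$, as quoted in the excerpt. The count $|(rJ_{2^h}) \cap (J_{2^h}+c')|$ is a standard intersection number of two translates of a difference set (one additive translate, one multiplicative), and the difference-set parameters pin it down: one obtains the value $q/4$ in every case where the two blocks are genuinely distinct and $b_1\ne b_2$. Concretely, $|A \cap (A+c')|$ for $A = rJ_{2^h}$ relates to how often a fixed nonzero difference is hit, and combined with $|A| = q/2$ and the multiplicative-difference-set property of $J_{2^h}^*$, a short inclusion–exclusion / character-sum argument yields exactly $q/4$.

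Having shown the only two intersection numbers are $0$ (for equal $b$) and $q/4$ (for distinct $b$), and noting both actually occur — e.g. $0$ is realized by $J_{2^h}$ and $J_{2^h}+s$, and $q/4$ by $J_{2^h}$ and $rJ_{2^h}$ for any $r \neq 0,1$ — the design has exactly two block intersection numbers, hence is quasi-symmetric by definition. I would close by recording that this is consistent with the block count $b = 2(q-1)$ and the non-symmetry already noted.

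\textbf{Main obstacle.} The delicate step is the computation of $|(rJ_{2^h}) \cap (J_{2^h}+c')|$ for $r \neq 1$, i.e. the intersection of a multiplicative scaling of $J_{2^h}$ with an additive translate of it. The clean way is to exploit the fact that $J_{2^h} = \{y^{2^h}+y : y\in\gf(q)\} = \ker(\Tr_{?})$-type object — in fact $J_{2^h}$ is precisely the image of the $\gf(2)$-linear map $y \mapsto y^{2^h}+y$, which is the same as a fixed hyperplane of $(\gf(q),+)$ viewed as an $\F_2$-space — so that $rJ_{2^h}$ and $J_{2^h}$ are two (generally distinct) $\F_2$-hyperplanes, and $J_{2^h}+c'$ is an affine hyperplane. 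The intersection of an affine hyperplane with a linear hyperplane in an $\F_2$-space of dimension $m$ has size either $2^{m-1}=q/2$ (if the affine one is a translate parallel to or contained appropriately) or $2^{m-2}=q/4$; since $r\ne 1$ forces $rJ_{2^h}\ne J_{2^h}$, one is in the generic position and the size is $q/4$ unless the intersection is empty — and emptiness is ruled out because two distinct $\F_2$-hyperplanes through $0$ always meet in a subspace of dimension $m-2 \geq 1$, which then meets any coset. Getting this dichotomy stated correctly (distinguishing the genuinely-different-block subcase from degenerate ones) is where care is needed; the difference-set description is a convenient cross-check but the $\F_2$-linear-algebra viewpoint is what makes the $q/4$ fall out cleanly.
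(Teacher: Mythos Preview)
Your proposal is correct, and the $\F_2$-linear-algebra argument you settle on in the ``Main obstacle'' paragraph is actually cleaner than the paper's route. The paper parametrises the blocks (for odd $m$) as $uJ_{2^h}+uv$ with $(u,v)\in\gf(q)^*\times\gf(2)$ and then splits on $(v_1,v_2)$: the cases $(0,0)$ and $(1,1)$ are handled via the Singer difference-set property of $J_{2^h}^*$ and of its complement $J_{2^h}+1$, each giving $|I|=q/4$ when $u_1\ne u_2$, while the mixed case $(0,1)$ is asserted to yield $I=\emptyset$. That last assertion is only correct when $u_1=u_2$; for $u_1\ne u_2$ the intersection is again $q/4$ (already visible at $m=3$), so the paper's case analysis is incomplete even though the theorem survives. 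Your hyperplane viewpoint avoids this entirely: once $rJ_{2^h}\ne J_{2^h}$ for $r\ne 1$ is known (which is exactly what was established just before Corollary~\ref{cor-mar905}), blocks with different scalars are cosets of two \emph{distinct} $\F_2$-hyperplanes, and any coset of one meets any coset of the other in exactly $q/4$ points, while two distinct cosets of the same hyperplane are disjoint. One small tightening of your phrasing: instead of ``which then meets any coset'', note directly that for distinct hyperplanes $H_1\ne H_2$ one has $\gf(q)=H_2\sqcup(H_2+s)$, hence $H_1=(H_1\cap H_2)\sqcup(H_1\cap(H_2+s))$ forces both pieces to have size $2^{m-2}=q/4$; translation then gives the general affine case.
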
 

\begin{proof}
We prove the conclusion only for odd $m$, as the proof for the other case is similar. 
Let notation be the same as before. Since $m$ is odd, $J_{2^h}$ does not contain $1$. 
In this case the block set becomes 
$$ 
\cB_{(f,q/2)}=\{uJ_{2^h} +uv: u \in \gf(q)^*, \ v \in \gf(2)  \}.  
$$ 
Let $(u_1, v_1)$ and $(u_2, v_2)$ be two elements in $\gf(q) \times \gf(2)$. Define 
$$ 
I=(u_1J_{2^h}+u_1v_1) \cap (u_2J_{2^h}+u_2v_2). 
$$ 
We now consider the value $|I|$ by distinguishing among the following cases. 

Assume that $(v_1, v_2)=(0,0)$. Then 
\begin{eqnarray*}
I = u_1J_{2^h} \cap u_2J_{2^h} = \left(u_1J_{2^h}^* \cap u_2J_{2^h}^* \right) \cup \{0\}. 
\end{eqnarray*} 
Since $J_{2^h}^*$ is a $(q-1, (q-2)/2, (q-4)/4)$ Singer difference set in $(\gf(q)^*, \times)$ 
(see Theorem \ref{thm-maschietti}), 
we have then 
\begin{eqnarray*}
|I| =  \left| \left(u_1J_{2^h}^* \cap u_2J_{2^h}^* \right) \right| + 1 
    = \left\{ 
        \begin{array}{ll}
         q/4 & \mbox{ if } u_1 \neq u_2, \\
         q/2   & \mbox{ if } u_1 = u_2. 
        \end{array}
        \right.  
\end{eqnarray*}  

Assume that $(v_1, v_2)=(0,1)$ or $(v_1, v_2)=(1,0)$. Note that $1 \not\in J_{2^h}$ and 
$uJ_{2^h}$ is an additive subgroup of $(\gf(q), +)$. It is easily seen that $I=\emptyset$. 

Finally, assume that $(v_1, v_2)=(1,1)$. We have then 
$$ 
I=u_1(J_{2^h}+1) \cap u_2(J_{2^h}+1). 
$$ 
It is known that $J_{2^h}+1$ is a $(q-1, q/2, q/4)$ Singer difference set in 
$(\gf(q)^*, \times)$. We then deduce that $|I|=q/4$ if $u_1 \neq u_2$, and $|I|=q/2$ 
otherwise. This completes the proof. 
\end{proof} 

The foregoing discussions in this section showed that the $3$-$(q, q/2, (q-4)/4)$ 
designs from the translation o-monomials $x^{2^h}$ are related to the Singer 
difference sets with parameters $(q-1, (q-2)/2, (q-4))$. It is very likely that they are isomorphic to the extended designs of the developments of the Singer difference sets with parameters $(q-1, (q-2)/2, (q-4))$. This is because every quasi-symmetric $3$-design with the block intersection number 
$0$ is the extension of a symmetric $2$-design \cite{Shri}. Anyway, our construction of the 
quasi-symmetric $3$-designs uses the direct approach $\bD(f, q/2)$, and relates the designs to translation 
hyperovals. 

\subsubsection{Parameters of the $3$-designs from other o-monomials} 

To determine the $\lambda$ value of the $3$-$(q, q/2, \lambda)$ design $\bD(x^e, q/2)$ 
from an o-monomial other than the translation o-monomials $x^e$, we need determine the 
size of the stabilizer $\Stab_{\AG_1(\gf(q))}(J_e)$ of $J_e$, both of which were defined in 
Theorem \ref{thm-mar901}. Experimental data strongly supports the next conjecture. 

\begin{conj}\label{conj-march141} 
Let $x^e$ be an o-monomial, where $e$ is not a power of $2$. Then 
$$ 
\Stab_{\AG_1(\gf(q))}(J_e)=\{x\}. 
$$ 
Consequently, the design $\bD(x^e, q/2)$ has parameters 
$3$-$(q, q/2, q(q-4)/8)$. 
\end{conj}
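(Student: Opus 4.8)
The plan is to first peel off the genuinely easy part of the stabiliser computation, reducing Conjecture~\ref{conj-march141} to a statement about translations only, and then to attack what remains via the Fourier spectrum of the indicator function of $J_e$ (equivalently, via exponential sums attached to the binomial $\alpha x^{e}+\alpha x$). Write $H:=\Stab_{\AG_1(\gf(q))}(J_e)$, so $\mu=|H|$; recall $0\in J_e$ and, by Theorem~\ref{thm-opoly2to1}, $|J_e|=q/2=2^{m-1}$.

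\emph{Step 1: $H$ contains only translations.} Let $\phi\colon x\mapsto ux+v$ lie in $H$ and suppose $u\neq 1$. Since $\gf(q)$ has characteristic $2$, $u+1\neq 0$, so $\phi$ has a unique fixed point $p=v/(u+1)$; one checks $\phi^{\ord(u)}=\mathrm{id}$, so $\langle\phi\rangle$ is cyclic of order $\ord(u)$, each of its non-identity elements then has nontrivial linear part, hence a unique fixed point, necessarily $p$, and therefore $\langle\phi\rangle$ acts semiregularly on $\gf(q)\setminus\{p\}$. As $\phi(J_e)=J_e$, the orbits of $\langle\phi\rangle$ on $J_e$ have length $\ord(u)$, apart from $\{p\}$ when $p\in J_e$; hence $\ord(u)$ divides $|J_e|=2^{m-1}$ or $|J_e|-1=2^{m-1}-1$. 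Combined with $\ord(u)\mid q-1=2^{m}-1$ and $\gcd(2^{m}-1,2^{m-1})=1=\gcd(2^{m}-1,2^{m-1}-1)$, this forces $\ord(u)=1$, a contradiction. Hence $H=\{x\mapsto x+v: v\in T_0\}$ with $T_0:=\{v\in\gf(q): J_e+v=J_e\}$ an $\gf(2)$-subspace of $\gf(q)$, and $\mu=|T_0|$. Conjecture~\ref{conj-march141} is therefore equivalent to the assertion $T_0=\{0\}$ whenever the exponent $e$ of the o-monomial is not a power of $2$; the stated parameters then drop out of Theorem~\ref{thm-mar901}.

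\emph{Step 2: no nonzero period.} For $\alpha\in\gf(q)$ put $S(\alpha)=\sum_{z\in J_e}(-1)^{\tr(\alpha z)}$; since $y\mapsto y^{e}+y$ is $2$-to-$1$ we have $2S(\alpha)=W(\alpha)$ where $W(\alpha):=\sum_{y\in\gf(q)}(-1)^{\tr(\alpha y^{e}+\alpha y)}$. A one-line Fourier computation gives $J_e+v=J_e$ if and only if $\tr(\alpha v)=0$ for every $\alpha$ with $S(\alpha)\neq 0$, i.e.\ $T_0=\bigl(\mathrm{Span}_{\gf(2)}\{\alpha: W(\alpha)\neq 0\}\bigr)^{\perp}$. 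As $W(0)=q\neq 0$, proving $T_0=\{0\}$ amounts to proving that the nonzero $\alpha$ with $W(\alpha)\neq 0$ span $\gf(q)$ over $\gf(2)$. My plan is to combine the Weil bound $|W(\alpha)|\le (e-1)\sqrt{q}$, which already forces $W(\alpha)\neq 0$ in ``most'' directions once $q$ is large compared with $e$, with the structural input that $x^{e}$ is an o-monomial (so $x^{e}+ux$ is $2$-to-$1$ for every $u\in\gf(q)^{*}$, and $\gcd(e(e-1),q-1)=1$) to push the set $\{\alpha: W(\alpha)\neq 0\}$ out of every hyperplane. An equivalent polynomial reformulation is available: the monic polynomial $Q_e(T)=\prod_{z\in J_e}(T+z)$ of degree $q/2$ is translation-invariant exactly on $T_0$, hence is an $\gf(2)$-polynomial in the linearised polynomial $\prod_{w\in T_0}(T+w)$, and one squares it using $Q_e(T)^{2}=\prod_{y\in\gf(q)}(T+y^{e}+y)$ to try to contradict this shape unless $e$ is a power of $2$. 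A third, geometric, route is to reinterpret $T_0\neq\{0\}$ as a nontrivial group of perspectivities of the hyperoval $\cH(x^{e})$ fixing both $(1,0,0)$ and $(0,1,0)$, and to invoke known determinations of the automorphism groups of monomial hyperovals.

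The main obstacle is Step 2: the hypothesis that $e$ is not a power of $2$ must be used in an essential way, and the slick formulations above (the spectrum of $\mathbf 1_{J_e}$ spanning, or the factorisation shape of $Q_e$) do not obviously deliver it without more precise control of the binomial Weil sums $W(\alpha)$, whose exact vanishing set is, for a general o-exponent $e$, not currently understood uniformly. For the explicit o-monomial families (the Segre exponent $6$ and the two Glynn exponents) the exponents are small or have controlled $2$-adic shape, and I would expect the Weil bound together with a direct study of when $W(\alpha)=0$ to settle those cases; a proof covering all o-monomials with $e$ not a power of $2$ simultaneously would probably require either a classification input on monomial hyperovals or a new idea about the vanishing of these exponential sums.
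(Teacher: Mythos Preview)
This is stated in the paper as a \emph{conjecture}, not a theorem; the paper proves it only for exponents of the shape $e=2^{i}+2^{j}$ (Lemma~\ref{lem-march147}, which covers $\Segre$ and $\Glynntwo$), and explicitly leaves $\Glynnone$ and $\overline{\Segre}$ open. So there is no full proof in the paper to compare against, and your honest assessment that Step~2 is the real obstacle matches the paper's own position.

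Your Step~1 is correct and goes further than anything in the paper: the orbit--divisibility argument (either $\ord(u)\mid 2^{m-1}$ or $\ord(u)\mid 2^{m-1}-1$, and each of these is coprime to $2^{m}-1$) kills every non-translation in the stabiliser for \emph{all} o-monomials, using only $|J_e|=q/2$. The paper does not isolate this reduction; its partial proof treats the dilation part and the translation part together via the Fourier identity $q^{2}=\sum_{\beta}\hat h(\beta)\hat h(\beta/b)(-1)^{\tr(c\beta/b)}$ and then invokes Xiang's precise Walsh spectrum (Lemma~\ref{lem-xiang}) to force first $b=1$ and then $c=0$. Your argument would let one skip the $b$-step entirely and focus on translations, which is a cleaner decomposition.

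Your Step~2 framework --- $T_0=\{0\}$ if and only if the nonzero $\alpha$ with $W(\alpha)\neq 0$ span $\gf(q)$ over $\gf(2)$ --- is also correct, and the paper's partial proof slots directly into it: for $e=2^{i}+2^{j}$, Xiang's lemma identifies the nonzero Fourier support as $\{\alpha:\tr(\alpha^{\ell})=1\}$, a set of size $2^{m-1}$ not containing $0$, hence not contained in any proper $\gf(2)$-subspace. One correction to your sketch: the Weil bound is an \emph{upper} bound on $|W(\alpha)|$ and cannot by itself certify $W(\alpha)\neq 0$; what is needed is control of the vanishing locus of $W$, and that is exactly what is unavailable for a general o-exponent.
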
 

To settle this conjecture, one may need the following result proved by Maschietti 
\cite{Masch98}. 

\begin{theorem}\label{thm-maschietti}
Let $e$ be a positive integer with $\gcd(e(e-1), q-1)=1$. Then $x^e$ is an o-monomial 
if and only if $J_e^*=J_e \setminus \{0\}$ is a $(q-1, (q-2)/2, (q-4)/4)$ difference set 
in $(\gf(q)^*, \times)$. 
\end{theorem}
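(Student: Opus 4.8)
The plan is to use Theorem~\ref{thm-opoly2to1} to turn the statement into a counting problem about the map $\phi\colon y\mapsto y^{e}+y$ on $\gf(q)$, and then to control the relevant quantities by a Jacobi-sum identity. First I would observe that, under the hypothesis $\gcd(e(e-1),q-1)=1$, the monomial $x^{e}$ is a permutation of $\gf(q)$ fixing $0$, so Theorem~\ref{thm-opoly2to1} applies and says that $x^{e}$ is an o-monomial if and only if $x^{e}+ux$ is $2$-to-$1$ for every $u\in\gf(q)^{*}$. Writing $d=(e-1)^{-1}\bmod(q-1)$ and substituting $x=u^{d}y$, one checks $x^{e}+ux=u^{de}(y^{e}+y)$ (the exponents agree because $d(e-1)\equiv1$), so the value multiset of $x^{e}+ux$ is just $u^{de}$ times that of $\phi$. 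Hence ``$x^{e}+ux$ is $2$-to-$1$ for all $u$'' is equivalent to ``$\phi$ is $2$-to-$1$''. Since $\phi^{-1}(0)=\{0,1\}$ and $0\in J_{e}$, in the $2$-to-$1$ case $|J_{e}|=q/2$ and $|J_{e}^{*}|=(q-2)/2$.

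The engine of the proof is the identity: for every nontrivial multiplicative character $\chi$ of $\gf(q)$ (extended by $\chi(0)=0$), $\sum_{y\in\gf(q)}\chi(y^{e}+y)=J(\chi^{d},\chi)$, the Jacobi sum. This comes from factoring $y^{e}+y=y(y^{e-1}+1)$ and substituting $s=y^{e-1}$, which is a bijection of $\gf(q)^{*}$, turning the sum into $\sum_{s}\chi^{d}(s)\chi(1+s)$ (and $1+s=1-s$ in characteristic $2$). Because $\gcd(d,q-1)=1$ and $d+1\equiv e(e-1)^{-1}$ with $\gcd(e(e-1)^{-1},q-1)=1$, the characters $\chi^{d}$, $\chi$ and $\chi^{d}\chi$ are simultaneously nontrivial, so $|J(\chi^{d},\chi)|^{2}=q$. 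Thus $\bigl|\sum_{y}\chi(y^{e}+y)\bigr|^{2}=q$ for every nontrivial $\chi$, unconditionally.

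For the forward direction, if $x^{e}$ is an o-monomial then $\phi$ is $2$-to-$1$, so $\chi(J_{e}^{*})=\tfrac12\sum_{y}\chi(y^{e}+y)$ and hence $|\chi(J_{e}^{*})|^{2}=q/4$ for every nontrivial $\chi$. Since $k:=|J_{e}^{*}|=(q-2)/2$ and $\lambda:=k(k-1)/(q-2)=(q-4)/4$ satisfy $k-\lambda=q/4$, the standard character criterion for difference sets (see, e.g., \cite{BJL}) shows $J_{e}^{*}$ is a $(q-1,(q-2)/2,(q-4)/4)$ difference set in $(\gf(q)^{*},\times)$. For the converse, suppose $J_{e}^{*}$ is such a difference set; then $|J_{e}^{*}|=(q-2)/2$, so $\phi$ takes exactly $(q-2)/2$ nonzero values while $\phi^{-1}(0)=\{0,1\}$. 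Setting $n_{v}=|\phi^{-1}(v)|$, we have $\sum_{v\neq0}n_{v}=q-2$ with exactly $(q-2)/2$ nonzero terms, and Parseval's identity on $(\gf(q)^{*},\times)$ combined with the Jacobi-sum bound gives $\sum_{v\neq0}n_{v}^{2}=2(q-2)$. Cauchy--Schwarz then forces every nonzero $n_{v}$ to equal $2$, so $\phi$ is $2$-to-$1$, and by the reduction above $x^{e}$ is an o-monomial.

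I expect the Jacobi-sum identity to be the crux: one must spot the substitution $s=y^{e-1}$ and check that the gcd hypotheses make all three relevant characters nontrivial, so the sum has absolute value exactly $\sqrt{q}$. Everything else --- the handling of the fiber $\{0,1\}$, the arithmetic check that $(q-1,(q-2)/2,(q-4)/4)$ is consistent with the character criterion, and the Parseval/Cauchy--Schwarz rigidity step --- is routine bookkeeping.
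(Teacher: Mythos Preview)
Your proposal is correct. Note, however, that the paper does not actually prove Theorem~\ref{thm-maschietti}: it is stated with attribution to Maschietti~\cite{Masch98} and used as a black box, so there is no in-paper proof to compare against.

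Your self-contained argument via the Jacobi-sum identity $\sum_{y\in\gf(q)}\chi(y^{e}+y)=J(\chi^{d},\chi)$ is clean and standard in spirit. One remark on the converse: the Parseval step
\[
\sum_{v\ne 0}n_v^{2}=\frac{1}{q-1}\Bigl((q-2)^{2}+(q-2)\,q\Bigr)=2(q-2)
\]
uses only the unconditional Jacobi-sum evaluation $|J(\chi^{d},\chi)|^{2}=q$ (valid for every nontrivial $\chi$ once $\gcd(e(e-1),q-1)=1$), not the difference-set hypothesis itself. Thus from that hypothesis you are really only invoking $|J_e^{*}|=(q-2)/2$; the Cauchy--Schwarz rigidity then forces all nonzero fibers to have size~$2$. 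This is a slight strengthening of what you need, and worth stating explicitly.
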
 

Below we prove Conjecture \ref{conj-march141} for several o-monomials. 
Let $J_e$ be defined in (\ref{eqn-Je2}). Define the following Boolean function 
$h(x)$ from $\mathrm{GF}(q)$ to $\gf(2)$:
\begin{align}\label{eqn-hfunction}
h(x)=\begin{cases}
1, & \mathrm{ if  }\quad x \in J_e,\\
0, & \mathrm{ otherwise}.
\end{cases} 
\end{align}

To prove Conjecture \ref{conj-march141} for several o-monomials, we need the following lemma. 

\begin{lemma}[\cite{Xiang98}]\label{lem-xiang} 
Let $m$ be odd and $e=2^i+2^j$ with $1\le i < j\le m-1$. If $f(x)=x^e$ is an o-polynomial over $\mathrm{GF}(2^m)$, then
\begin{align*}
\hat{h}(\beta)=\begin{cases}
0, &  \mbox{ if } \ \mathrm{Tr}(\beta^{\ell})=0,\\
 \pm 2^{\frac{m+1}{2}}, &   \mbox{ if } \  \mathrm{Tr}(\beta^{\ell})=1,
\end{cases}
\end{align*}
where $\hat{h}$  denotes the Walsh transform  of $h$  and
\begin{align}\label{eqn-elldef}
\ell \equiv \frac{e-1}{e} \pmod {(2^m-1)}.
\end{align} 
\end{lemma}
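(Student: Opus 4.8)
The plan is to turn $\hat{h}$ into a quadratic exponential sum and evaluate it. Since $f(x)=x^e$ is an o-polynomial, $y\mapsto y^e+y$ is $2$-to-$1$ onto $J_e$ by Theorem~\ref{thm-opoly2to1}, so $|J_e|=q/2$; in particular $\hat{h}(0)=q-2|J_e|=0$, which agrees with the claim at $\beta=0$ because $\mathrm{Tr}(0^{\ell})=0$. For $\beta\neq 0$, using $\sum_{x\in\gf(q)}(-1)^{\mathrm{Tr}(\beta x)}=0$ together with the $2$-to-$1$ property,
\begin{align*}
\hat{h}(\beta)=-2\sum_{x\in J_e}(-1)^{\mathrm{Tr}(\beta x)}=-\sum_{y\in\gf(q)}(-1)^{\mathrm{Tr}(\beta(y^e+y))}=:-S(\beta).
\end{align*}
Because $e=2^i+2^j$, the Boolean function $y\mapsto\mathrm{Tr}(\beta(y^e+y))=\mathrm{Tr}(\beta y^{2^i}y^{2^j})+\mathrm{Tr}(\beta y)$ is quadratic, with associated alternating bilinear form $B_\beta(y,z)=\mathrm{Tr}(\beta(y^{2^i}z^{2^j}+y^{2^j}z^{2^i}))$, and $\hat{h}$ is determined by the Weil sum $S(\beta)$.

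Next I would apply the standard evaluation of such sums: with $W_\beta$ the radical of $B_\beta$ (an $\gf(2)$-subspace, on which the quadratic function restricts to a linear form), one has $S(\beta)=\pm 2^{(m+\dim W_\beta)/2}$ when that linear form is identically $0$ and $S(\beta)=0$ otherwise. A short manipulation of Frobenius exponents identifies $W_\beta\setminus\{0\}$ as the solution set in $\gf(q)^*$ of a single equation $y^{2^{m-(j-i)}-2^{j-i}}=\beta^{2^{m-i}-2^{m-j}}$, whose number of solutions, if positive, is $\gcd(2^{m-(j-i)}-2^{j-i},\,q-1)=2^{\gcd(j-i,m)}-1$. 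Since $m$ is odd and $B_\beta$ is alternating, $\dim W_\beta\equiv m\pmod 2$, forcing $W_\beta\neq\{0\}$ and hence $|W_\beta|=2^{\gcd(j-i,m)}$.

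The decisive step is then that $\gcd(j-i,m)=1$, which is where the hypothesis that $x^{2^i+2^j}$ is an \emph{o-monomial} (rather than merely a permutation monomial with $\gcd(e(e-1),q-1)=1$) is genuinely used. I would extract this from condition~(2) of Theorem~\ref{thm-hyperovaloply}: the polynomial $g_1(x)=x^{e-1}+x^{2^i-1}+x^{2^j-1}$ must permute $\gf(q)$, and restricting $g_1$ to the subfield $\gf(2^d)$ with $d=\gcd(j-i,m)$ collapses the three exponents modulo $2^d-1$ and constrains $d$; pushing this (or invoking the known structural restrictions on o-monomials of the form $x^{2^i+2^j}$) yields $d=1$. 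Granting $\gcd(j-i,m)=1$, we get $|W_\beta|=2$, so $S(\beta)\in\{0,\pm 2^{(m+1)/2}\}$, and the same exponent bookkeeping — using $2^i(1+2^{j-i})\equiv e$ and $2^m\equiv 1\pmod{q-1}$ — shows the unique nonzero element of $W_\beta$ is $y_\beta=\beta^{-1/e}$.

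Finally I would evaluate the restricted quadratic function at $y_\beta$. Since $y_\beta^e=\beta^{-1}$ and $\ell\equiv (e-1)/e\equiv 1-1/e\pmod{q-1}$,
\begin{align*}
\mathrm{Tr}(\beta(y_\beta^e+y_\beta))=\mathrm{Tr}(1)+\mathrm{Tr}(\beta^{\ell})=1+\mathrm{Tr}(\beta^{\ell}),
\end{align*}
using $\mathrm{Tr}(1)=1$ since $m$ is odd. Hence the linear form on $W_\beta$ vanishes exactly when $\mathrm{Tr}(\beta^{\ell})=1$, so $S(\beta)=\pm 2^{(m+1)/2}$ precisely when $\mathrm{Tr}(\beta^{\ell})=1$ and $S(\beta)=0$ precisely when $\mathrm{Tr}(\beta^{\ell})=0$; as $\hat{h}(\beta)=-S(\beta)$, this is exactly the asserted distribution. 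I expect the only real obstacle to be the equality $\gcd(j-i,m)=1$: the reduction of $\hat{h}$ to a quadratic sum, its evaluation, and the identification of $y_\beta$ are all routine, whereas ruling out $\gcd(j-i,m)>1$ truly requires the o-monomial property.
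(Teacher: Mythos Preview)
The paper does not supply its own proof of this lemma; it is quoted from \cite{Xiang98}. Your reconstruction is essentially the argument one expects: rewrite $\hat h(\beta)$ as the Weil sum $-S(\beta)$ via the $2$-to-$1$ map $y\mapsto y^e+y$, recognise $S(\beta)$ as a character sum attached to a quadratic form (since $e=2^i+2^j$), compute its radical, and read off the value from the restriction to the radical. All of that is correct and standard, and your identification of the nonzero radical element as $\beta^{-1/e}$ together with the evaluation $\Tr(\beta(y_\beta^e+y_\beta))=1+\Tr(\beta^\ell)$ is exactly right.

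The weak point is precisely where you flag it, and your proposed subfield argument does not close it. Restricting $g_1(x)=x^{e-1}+x^{2^i-1}+x^{2^j-1}$ to $\gf(2^{d})$ with $d=\gcd(j-i,m)$ collapses the last two terms (since $2^j\equiv 2^i\pmod{2^d-1}$), leaving $g_1|_{\gf(2^d)}(x)=x^{2^{i+1}-1}$; the permutation property on $\gf(2^d)$ then gives only $\gcd(i+1,d)=1$, not $d=1$. The same cancellation occurs for every $g_a$ with $a\in\gf(2^d)$, and after the harmless Frobenius normalisation $i=0$ the constraint $\gcd(i+1,d)=1$ becomes vacuous. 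Likewise, the permutation conditions $\gcd(e,q-1)=\gcd(e-1,q-1)=1$ by themselves do not force $d=1$ (e.g.\ $e=9$, $m=9$ satisfies both). So this route, as written, cannot rule out $d>1$.

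In practice the issue is harmless for the paper's purposes: the lemma is applied only in Corollary~\ref{cor-march148} to the Segre exponent $e=6$ (where $j-i=1$) and the Glynn~II exponents (where $j-i=(m\mp 1)/4$, and a one-line divisibility check gives $\gcd((m\mp 1)/4,m)=1$). For these families your argument is complete. If you want the statement for an \emph{arbitrary} o-monomial $x^{2^i+2^j}$, you need a genuinely different ingredient to show that the o-monomial property forces $\gcd(j-i,m)=1$; the tools you have invoked so far are not enough.
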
 

By Lemma \ref{lem-GEHKX} or \ref{thm-basicproperty2}, $x^\ell$ is also an o-monomial 
over $\gf(q)$. We will make use of this fact shortly below. We now prove the following 
lemma, which settles Conjecture \ref{conj-march141} for several o-monomials over $\gf(q)$. 

\begin{lemma}\label{lem-march147} 
Let $m$ be odd and $e=2^i+2^j$ with $1\le i < j\le m-1$. Let $(b,c) \in \mathrm{GF}(q)^*\times \mathrm{GF}(q)$. If $f(x)=x^e$
is an o-polynomial over $\mathrm{GF}(q)$, then  
$$ 
\Stab_{\AG_1(\gf(q))}(J_e)=\{x\}, 
$$ 
where $J_e$ was defined in (\ref{eqn-Je2}). 
\end{lemma}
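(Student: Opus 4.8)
The plan is to show that any $\sigma(x) = ux+v \in \AG_1(\gf(q))$ fixing $J_e$ must be the identity, by exploiting the Walsh spectrum of the indicator function $h$ from Lemma~\ref{lem-xiang}. The key observation is that $\sigma J_e = J_e$ is equivalent to the functional equation $h(u^{-1}(x+v)) = h(x)$ for all $x \in \gf(q)$; writing $h_\sigma(x) = h(u^{-1}(x+v))$, this says $h_\sigma = h$, hence $\widehat{h_\sigma} = \widehat{h}$ as functions on $\gf(q)$. A standard change of variables in the Walsh transform gives $\widehat{h_\sigma}(\beta) = (-1)^{\Tr(\beta v)}\,\widehat{h}(u\beta)$ (possibly up to the exact convention used for $\widehat h$, which I would fix at the start), so the fixing condition becomes
\begin{align*}
(-1)^{\Tr(\beta v)}\,\widehat{h}(u\beta) = \widehat{h}(\beta) \qquad \text{for all } \beta \in \gf(q).
\end{align*}

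First I would record the support of $\widehat{h}$: by Lemma~\ref{lem-xiang}, $\widehat{h}(\beta) \neq 0$ precisely when $\Tr(\beta^\ell) = 1$, where $\ell \equiv (e-1)/e \pmod{q-1}$, and on that set $\widehat{h}(\beta) = \pm 2^{(m+1)/2}$. Comparing supports in the displayed identity forces $\Tr((u\beta)^\ell) = 1 \iff \Tr(\beta^\ell) = 1$, i.e. the set $S = \{\beta : \Tr(\beta^\ell)=1\}$ satisfies $u^{\ell} \cdot S^{(\ell)} $-type invariance; cleanly, substituting $\gamma = \beta^\ell$ (legitimate since $\gcd(\ell, q-1)=1$, as $x^\ell$ is also an o-monomial by Lemma~\ref{thm-basicproperty2}) shows $\{\gamma : \Tr(u^\ell \gamma) = 1\} = \{\gamma : \Tr(\gamma)=1\}$, hence $\Tr(u^\ell \gamma) = \Tr(\gamma)$ for all $\gamma$, which gives $u^\ell = 1$ and therefore $u = 1$. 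With $u=1$ in hand, the identity collapses to $(-1)^{\Tr(\beta v)}\widehat{h}(\beta) = \widehat{h}(\beta)$, so $\Tr(\beta v) = 0$ for every $\beta$ in the (nonempty) support of $\widehat h$; since that support spans $\gf(q)$ over $\gf(2)$ — it must, as $\widehat h$ is not supported on a proper hyperplane — we conclude $v = 0$, completing the proof.

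The main obstacle I anticipate is the very last spanning claim: one needs that $\{\beta : \Tr(\beta^\ell) = 1\}$ is not contained in any $\gf(2)$-hyperplane of $\gf(q)$, equivalently that no nonzero $v$ has $\Tr(\beta v) = 0$ on all of it. This should follow because $h$ is a bent-like function (its Walsh values are all $0$ or $\pm 2^{(m+1)/2}$, the semi-bent profile), so $J_e$ is not a union of cosets of a hyperplane; concretely, if $v \neq 0$ annihilated the support of $\widehat h$, then $\widehat h$ would be invariant under translation by $v$ in the sense $\widehat h(\beta + v')=\widehat h(\beta)$ for $v' \perp$-dual to $v$, forcing $h$ to be constant on cosets of a line, contradicting $|J_e| = q/2$ together with the difference-set structure of $J_e^*$ from Theorem~\ref{thm-maschietti}. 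A secondary nuisance is bookkeeping the exact Walsh-transform convention and the additive-character change of variables so that the sign factor $(-1)^{\Tr(\beta v)}$ and the dilation $\widehat h(u\beta)$ come out exactly as above; I would pin this down once at the start and then the argument is routine. Note also the hypothesis mentions $(b,c)$, which appears vestigial here — the statement is purely about $\Stab_{\AG_1(\gf(q))}(J_e)$ — so I would simply not use $(b,c)$ in the proof.
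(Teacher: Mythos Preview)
Your approach is correct and rests on the same key input (Lemma~\ref{lem-xiang}) as the paper, but you reach the conclusion by a slightly more direct route. The paper does not invoke the pointwise identity $\widehat{h_\sigma}=\widehat h$; instead it forms the correlation $A=\sum_x(-1)^{h(x)+h(bx+c)}=q$, expands it with characters to obtain the Parseval-type identity $q^2=\sum_\beta \hat h(\beta)\,\hat h(\beta/b)\,(-1)^{\Tr(c\beta/b)}$, and then, assuming $b\neq1$, bounds the right side crudely by $2^{m-2}\cdot\bigl(2^{(m+1)/2}\bigr)^2=2^{2m-1}<q^2$ to force $b=1$. Your support-matching argument replaces this estimate by the clean linear-algebra step $\Tr(u^\ell\gamma)=\Tr(\gamma)$ for all $\gamma$, hence $u^\ell=1$; that is arguably neater, though both arguments are short.

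The obstacle you flag at the end is not a real obstacle and is dispatched in the paper by a one-line count rather than any spanning or bent-function argument: the support $S=\{\beta:\Tr(\beta^\ell)=1\}$ has exactly $2^{m-1}$ elements and does not contain $0$, whereas for any nonzero $v$ the hyperplane $\{\beta:\Tr(v\beta)=0\}$ has exactly $2^{m-1}$ elements and does contain $0$; hence $S\cup\{0\}$ has $2^{m-1}+1$ elements and cannot lie in that hyperplane, which forces $v=0$. No semi-bent or difference-set machinery is needed. Your observation that the pair $(b,c)$ in the hypothesis is vestigial is correct.
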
 

\begin{proof}
Let $h(x)$ be defined in (\ref{eqn-hfunction}), which is the characteristic function of the set 
$J_e$. Let $(b, c) \in \gf(q)^* \times \gf(q)$ such that $h(bx+c)=h(x)$. The desired conclusion is the same 
as that $(b,c)=(1,0)$. 

Let $A=\sum_{x\in \mathrm{GF}(q)} (-1)^{h(x)+h(bx+c)}$. Since $h(bx+c)=h(x)$, we have $A=q$. 
We now compute $A$ in a different way. Note that 
\begin{eqnarray*}
\sum_{\beta \in \mathrm{GF}(q)} (-1)^{\mathrm{Tr}(\beta (x+y))} 
=\left\{ 
\begin{array}{ll}
q & \mbox{ if }  x=y, \\
0 & \mbox{ if }   x \neq y. 
\end{array}
\right. 
\end{eqnarray*} 
We have then 
\begin{align*}
q A=& \sum_{x,y\in \mathrm{GF}(q)} (-1)^{h(x)+h(by+c)} \sum_{\beta \in \mathrm{GF}(q)} (-1)^{\mathrm{Tr}(\beta (x+y))}\\
=& \sum_{\beta \in \mathrm{GF}(q)} \sum_{x\in \mathrm{GF}(q)} (-1)^{h(x)+\mathrm{Tr}(\beta x)} \sum_{y\in \mathrm{GF}(q)} (-1)^{h(by+c)+\mathrm{Tr}(\beta y)}\\
=& \sum_{\beta \in \mathrm{GF}(q)} \hat{h}(\beta) \sum_{y\in \mathrm{GF}(q)} (-1)^{h(by+c)+\mathrm{Tr}\left (\frac{\beta}{b} (by+c)+\frac{c\beta}{b} \right )}\\
=& \sum_{\beta \in \mathrm{GF}(q)} \hat{h}(\beta) \hat{h}(\frac{\beta}{b}) (-1)^{\mathrm{Tr}\left (\frac{c\beta}{b}\right )}.
\end{align*} 
Since $A=q$, we then deduce that 
\begin{eqnarray}\label{eqn-march145}
q^2=\sum_{\beta \in \mathrm{GF}(q)} \hat{h}(\beta) \hat{h}(\frac{\beta}{b}) (-1)^{\mathrm{Tr}\left (\frac{c\beta}{b}\right )}.
\end{eqnarray}
Using this equation and Lemma \ref{lem-xiang}, below we prove that $(b,c)=(1,0)$. 

Recall that $x^\ell$ is a permutation of $\gf(q)$, where $\ell$ was defined in (\ref{eqn-elldef}). 
Suppose that $b \neq 1$. Then $b^\ell \neq 1$. Consequently, the total number of 
$\beta$ in $\gf(q)$ such that $\tr(\beta^\ell)=1$ and $\tr((\beta/b)^\ell)=1$ is $2^{m-2}$. 
It then follows from Lemma \ref{lem-xiang} that 
\begin{eqnarray*}
\sum_{\beta \in \mathrm{GF}(q)} \hat{h}(\beta) \hat{h}(\frac{\beta}{b}) (-1)^{\mathrm{Tr}\left (\frac{c\beta}{b}\right )} 
&=& \sum_{\tr(\beta^\ell)=1,\ \tr((\beta/b)^\ell)=1} \hat{h}(\beta) \hat{h}(\frac{\beta}{b}) (-1)^{\mathrm{Tr}\left (\frac{c\beta}{b}\right )} \\
& \leq & 
\sum_{\tr(\beta^\ell)=1,\ \tr((\beta/b)^\ell)=1} 2^{\frac{m+1}{2}} 2^{\frac{m+1}{2}} \times 1 \\
&=& 2^{m-2} 2^{\frac{m+1}{2}} 2^{\frac{m+1}{2}} \\ 
&=& 2^{2m-1} \\ 
&<& q^2,  
\end{eqnarray*} 
which is contrary to (\ref{eqn-march145}). Consequently, we must have $b=1$. Since $b=1$, by 
Lemma \ref{lem-xiang} Equation (\ref{eqn-march145}) becomes 
\begin{eqnarray}\label{eqn-march146}
q^2=2^{m+1} \sum_{\tr(\beta^\ell)=1}  (-1)^{\mathrm{Tr}\left (c\beta\right )}.
\end{eqnarray} 
This equation forces $\tr(c\beta)=0$ for all the $2^{m-1}$ nonzero elements $\beta \in \gf(q)$ 
such that $\tr(\beta^\ell)=1$. Note that $\tr(c \times 0)=0$. Thus, $\tr(cx)=0$ has at least $2^{m-1}+1$ 
solutions, which is possible only if $c=0$. This completes the proof.  
\end{proof} 

The next result follows directly from Theorem \ref{thm-mar901} and Lemma \ref{lem-march147}. 

\begin{corollary}\label{cor-march148} 
The incidence structure $\bD(f, q/2):=(\gf(q), \cB_{(f,q/2)})$ is a $3$-$(q, q/2, q(q-4)/8)$ design if 
$f(x)=\Segre(x)$ or $f(x)=\Glynntwo(x)$.   
\end{corollary}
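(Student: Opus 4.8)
The plan is to show that both $\Segre(x)=x^6$ and $\Glynntwo(x)$ are o-monomials of the special shape $x^e$ with $e = 2^i+2^j$, and then quote Lemma \ref{lem-march147} together with Theorem \ref{thm-mar901} to conclude. First I would observe that $6 = 2^1 + 2^2$, so with $i=1$, $j=2$ the Segre exponent is literally of the form $2^i+2^j$ with $1 \le i < j \le m-1$ (the constraint $j=2 \le m-1$ holds since $m$ is odd and $m \ge 3$); moreover $\Segre(x)=x^6$ is an o-monomial over $\gf(q)$ for odd $m$ by Segre's theorem recalled in the excerpt. Hence Lemma \ref{lem-march147} applies verbatim and gives $\Stab_{\AG_1(\gf(q))}(J_6)=\{x\}$, so $\mu=1$ and Theorem \ref{thm-mar901} yields the $3$-$(q,q/2,q(q-4)/8)$ design.

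Next I would handle $\Glynntwo(x)$. By definition its exponent is $e = 2^{(m+1)/2}+2^{(3m+1)/4}$ when $m \equiv 1 \pmod 4$ and $e = 2^{(m+1)/2}+2^{(m+1)/4}$ when $m \equiv 3 \pmod 4$; in either case $e = 2^i+2^j$ with $i,j$ two distinct integers in the range $\{1,\dots,m-1\}$. One checks the ranges: for $m \equiv 1 \pmod 4$ we have $1 \le (m+1)/2 < (3m+1)/4 \le m-1$ (the last inequality is $3m+1 \le 4m-4$, i.e. $m \ge 5$, which holds); for $m \equiv 3 \pmod 4$ we have $1 \le (m+1)/4 < (m+1)/2 \le m-1$ (again $m \ge 3$ suffices). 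Glynn's theorem, recalled in the excerpt, tells us $\Glynntwo(x)$ is an o-monomial over $\gf(q)$ for odd $m$. Therefore Lemma \ref{lem-march147} again applies and gives $\Stab_{\AG_1(\gf(q))}(J_e)=\{x\}$, hence $\mu=1$, and Theorem \ref{thm-mar901} delivers the same $3$-$(q,q/2,q(q-4)/8)$ parameters.

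In short, the proof is a routine verification that the two exponents $6$ and the Glynn~II exponent both lie in the hypothesis class $e = 2^i+2^j$ of Lemma \ref{lem-march147}, combined with the already-established o-monomial status of these exponents for odd $m$, after which Theorem \ref{thm-mar901} does all the design-theoretic work. The only mildly technical point — and the place I would be most careful — is checking the exponent-range inequalities $1 \le i < j \le m-1$ in each residue case and confirming that the auxiliary hypothesis $m$ odd needed by Lemma \ref{lem-xiang} (hence by Lemma \ref{lem-march147}) is exactly the hypothesis under which $\Segre$ and $\Glynntwo$ are defined; everything else is an immediate appeal to previously proved results.
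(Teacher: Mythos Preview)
Your proposal is correct and follows exactly the paper's approach: the paper's proof is the single sentence ``follows directly from Theorem \ref{thm-mar901} and Lemma \ref{lem-march147},'' and you have simply spelled out the verification that the Segre and Glynn~II exponents are of the form $2^i+2^j$ with $1\le i<j\le m-1$, which is precisely what is needed to invoke Lemma \ref{lem-march147}.
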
  

It can be easily proved that $\bD(f, q/2)$ is isomorphic to $\bD(f^{-1}, q/2)$ if $f$ 
is an o-monomial over $\gf(q)$. The conclusion of Corollary \ref{cor-march148} is also 
true for the two designs $\bD(\Segre^{-1}(x), q/2)$ and $\bD(\Glynntwo^{-1}(x), q/2)$. 
Note that Conjecture \ref{conj-march141} is still open for the o-monomials 
$\overline{\Segre}(x)$ and  $\Glynnone(x)$.

It is well known that the development of the difference set $J_e^*$ can be extended 
into a $3$-$(q, q/2, (q-4)/4)$ design. For any o-monomial $x^e$, where $e$ is not a 
a power of $2$, the $3$-design $\bD(x^e, q/2)$ has parameters $3$-$(q, q/2, q(q-4)/8)$. 
Therefore, our $3$-designs $\bD(x^e, q/2)$ supported by such o-monomials $x^e$ cannot be isomorphic to the 
extended $3$-design of the development of the difference set $J_e^*$. Recall that the 
translation o-monomials are exceptions.

\subsubsection{The isomorphy of designs $\bD(f, q/2)$ from o-polynomials $f$}

First of all, we point out that two equivalent o-polynomials $f$ and $g$ may give two 
non-isomorphic designs $\bD(f, q/2)$ and $\bD(g, q/2)$. For example, by Lemma \ref{lem-GEHKX} 
the two o-monomials $x^2$ and $x^{q-2}$ are equivalent, but $\bD(x^2,q/2)$ and 
$\bD(x^{q-2}, q/2)$ 
are not isomorphic, as $\bD(x^2, q/2)$ is a $3$-$(q, q/2, (q-4)/4)$ design and 
$\bD(x^{q-2},q/2)$ is a 
$3$-$(q, q/2, q(q-4)/8)$ design. By Lemma \ref{lem-GEHKX}, the two hyperovals $\cH(x^2)$ 
and $\cH(x^{q/2})$ are equivalent, while it can be proved that the two designs $\bD(x^2, q/2)$ 
and $\bD(x^{q/2}, q/2)$ are isomorphic. Hence, the equivalence of o-polynomials is 
different from the isomorphy of designs $\bD(f, q/2)$ from o-polynomials.  

If $f(x)=x^e$ is an o-polynomial, then it is easily seen that $\bD(f, q/2)$ and 
$\bD(f^{-1}, q/2)$ are isomorphic. But $\bD(f, q/2)$ and 
$\bD(f^{-1}, q/2)$ may not be isomorphic if $f$ is not a monomial. For example, 
$\bD(\Cherowitzo(x), q/2)$ and 
$\bD(\Cherowitzo^{-1}(x), q/2)$ are not isomorphic when $m=5$.  

Since it is hard to do a theoretical isomorphy classification of designs $\bD(f,q/2)$ 
from o-polynomials $f$, we have done an isomorphy classification for the following set 
of o-polynomials for the case $m=5$ with Magma:  
\begin{eqnarray*}
\{
\Segre(x), \ 
\overline{\Segre}(x), \ 
\Glynnone(x), \ 
\Glynntwo(x), \ 
\Cherowitzo(x), \\  
\overline{\Cherowitzo}(x), \ 
\Cherowitzo^{-1}(x), \ 
\Payne(x), \ 
\Subiaco_1(x)
\}.  
\end{eqnarray*}
Their designs $\bD(f, q/2)$ for $m=5$ are pairwise not isomorphic, except that  
$\bD(\Segre(x), q/2)$ and $\bD(\Glynnone(x), q/2)$ are isomorphic. But $\bD(\Segre(x), q/2)$ and $\bD(\Glynnone(x), q/2)$ are not isomorphic when $m=7$. Hence, the 3-designs of these 
o-monomials are paiwise not isomorphic in general.

\section{Designs from special polynomials over $\gf(q)$ for odd $q$}\label{sec-designgfq}

Let $q$ be odd throughout this section. Theorem \ref{thm-380} says that any permutation monomial 
$x^e$ over $\gf(q)$ supports $2$-designs. Since $x^e$ is a permutation, $e$ must be odd. 
Let $d=\gcd(e-1, q-1)$. Then $d \geq 2$. 

An interesting case is that $d=\gcd(e-1, q-1)=2$. In this case, it can be 
shown that there are at most two block sizes $|\{y^e+y: y \in \gf(q)\}|$ and 
$|\{y^e+\alpha y: y \in \gf(q)\}|$, where $\alpha$ is a generator of $\gf(q)^*$.   
In this case, $x^e$ supports at most two nontrivial $2$-designs with different block sizes. 

Motivated by the foregoing discussions, we call a monomial $x^e$ over $\gf(q)$ 
for odd $q$ a t-monomial (i.e., twin design monomial) if $\gcd(e, q-1)=1$ 
and $|\{x^e+bx: x \in \gf(q)\}|$ takes only two distinct values for all 
$b \in \gf(q)^*$.   

\begin{theorem}
Let $p$ be odd and $m \geq 2$. Below is a list of monomials $x^e$ over $\gf(p^m)$ 
such that $\gcd(e, p^m-1)=1$ and $\gcd(e-1, p^m-1)=2$. 
\begin{itemize}
\item $e=3$ and $p=3$. 
\item $e=3$, $p \equiv 5 \pmod{6}$ and $m$ is odd. 
\item $e=5$, $p \in \{3,7\}$ and $m$ is odd. 
\item $e=p^m-2$. 
\item $e=(p^m-3)/2$, $p \equiv 1 \pmod{4}$ and $m$ is even, or $p \equiv 3 \pmod{4}$. 
\item $e=p^m-p-1$ and $m$ is odd.   
\end{itemize}
\end{theorem}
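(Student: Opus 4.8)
The plan is to verify, item by item, the two arithmetic conditions $\gcd(e,p^m-1)=1$ and $\gcd(e-1,p^m-1)=2$ asserted by the theorem; nothing beyond elementary congruence and $\gcd$ manipulation is needed. Write $q=p^m$. Two reductions will be used throughout: (i) $\gcd(a,b)=\gcd(b-a,b)$, so that for an exponent $e$ with $q-1-e$ small one has $\gcd(e,q-1)=\gcd(q-1-e,\,q-1)$, replacing $e$ by a small number (and likewise $\gcd(e-1,q-1)=\gcd(q-e,q-1)$); and (ii) for a prime $r\neq p$, $r\mid q-1$ exactly when $\ord_r(p)\mid m$, so divisibility of $q-1$ by a small prime is read off from $\ord_r(p)$ together with the parity (or a finer residue) of $m$, while $p\nmid q-1$ is immediate.

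I would first dispatch the large-exponent items. For $e=q-2$: $q-1-e=1$ gives $\gcd(e,q-1)=1$, and $q-e=2$ with $q$ odd gives $\gcd(e-1,q-1)=\gcd(2,q-1)=2$. For $e=q-p-1$: $q-1-e=p$, so $\gcd(e,q-1)=\gcd(p,q-1)=1$; and $q-e=p+1$, so $\gcd(e-1,q-1)=\gcd(p+1,q-1)$, and $q\equiv(-1)^m\pmod{p+1}$ with $m$ odd gives $q-1\equiv-2$, hence $\gcd(p+1,q-1)=\gcd(p+1,2)=2$ since $p+1$ is even. For $e=(q-3)/2$: here $q-1-e=\tfrac{q+1}{2}$ and $q-e=\tfrac{q+3}{2}$; from $2\cdot\tfrac{q+1}{2}=(q-1)+2$ and $2\cdot\tfrac{q+3}{2}=(q-1)+4$ one gets that any common divisor of $\tfrac{q+1}{2}$ with $q-1$ divides $2$, and any common divisor of $\tfrac{q+3}{2}$ with $q-1$ divides $4$, so the exact $\gcd$'s are determined by $q\bmod 8$; the hypothesis that $m$ be even is precisely what forces $q=(p^{m/2})^2\equiv1\pmod8$, which makes $\tfrac{q+1}{2}$ odd (so the first $\gcd$ is $1$) and gives $\tfrac{q+3}{2}\equiv2\pmod4$ while $8\mid q-1$ (so the second $\gcd$ is $2$).

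Next I would treat $e=3$ and $e=5$. Since $e-1\in\{2,4\}$, $\gcd(e-1,q-1)$ is governed entirely by the $2$-part of $q-1$: for $e=3$ one has $\gcd(2,q-1)=2$ automatically, and for $e=5$ one needs $4\nmid q-1$, i.e. $q\equiv3\pmod4$, which for the listed odd $m$ amounts to $p\equiv3\pmod4$, consistent with $p\in\{3,7\}$. And $\gcd(e,q-1)$ equals $\gcd(3,q-1)$ or $\gcd(5,q-1)$, which is $1$ exactly when $q\not\equiv1\pmod3$, resp. $q\not\equiv1\pmod5$: the hypotheses are tailored to this, since $p\equiv5\pmod6$ with $m$ odd (or $p=3$) gives $q\not\equiv1\pmod3$, and $p\in\{3,7\}$ has $\ord_5(p)=4$, which does not divide an odd $m$, so $5\nmid q-1$. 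Each such case is then a one- or two-line check.

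The argument is not conceptually hard; the one place demanding care is the $2$-adic bookkeeping. For $e=(q-3)/2$ (and, less delicately, for $e=5$) the condition $\gcd(e-1,q-1)=2$ is sensitive to $q\bmod 8$ rather than just $q\bmod 4$, so one must translate the congruence-and-parity hypotheses on $p,m$ into the exact $2$-adic valuation of $q-1$. In particular, for the item $e=(q-3)/2$ one should confirm that the restriction to even $m$ is intended to cover the $p\equiv3\pmod4$ branch as well, since for $p\equiv3\pmod4$ with $m$ odd one has $q\equiv3\pmod8$ and both required equalities fail. Beyond that, it only remains to record that each listed $e$ lies in the range $2\le e\le q$ for every admissible $m\ge2$, so that $x^e$ is indeed a permutation monomial of $\gf(q)$ there.
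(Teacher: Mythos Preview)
Your approach is exactly the one the paper intends: the paper's own proof reads in full ``It is straightforward to prove the desired conclusions for the values $e$. The details are left to the reader.'' Your item-by-item reduction via $\gcd(e,q-1)=\gcd(q-1-e,\,q-1)$ and $\gcd(e-1,q-1)=\gcd(q-e,\,q-1)$, together with the $\ord_r(p)\mid m$ criterion, is precisely the intended elementary verification, and your computations for the items $e=3$, $e=5$, $e=q-2$, and $e=q-p-1$ are correct.

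Your flag on the fifth item is well taken and is a genuine issue with the statement rather than with your argument. For $e=(q-3)/2$ one has $\gcd(e,q-1)=\gcd\!\big(\tfrac{q+1}{2},\,q-1\big)$ and $\gcd(e-1,q-1)=\gcd\!\big(\tfrac{q+3}{2},\,q-1\big)$, and exactly as you say these are controlled by $q\bmod 8$: both required equalities hold if and only if $q\equiv 1\pmod 8$, which for odd $p$ is equivalent to $m$ even or $p\equiv 1\pmod 8$. Under the listed hypothesis ``$p\equiv 3\pmod 4$'' with $m$ odd one gets $q\equiv 3\pmod 4$ (more precisely $q\equiv 3$ or $7\pmod 8$, not only $3$ as you wrote, but this does not affect the conclusion), whence $\gcd(e,q-1)=2$ and $\gcd(e-1,q-1)=1$, so both conditions fail; concrete witnesses are $(p,m)=(3,3)$ and $(7,3)$. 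Thus either the ``$m$ even'' restriction should apply to the $p\equiv 3\pmod 4$ branch as well, or the intended hypothesis is simply $q\equiv 1\pmod 8$. Apart from this small imprecision about $q\bmod 8$, your plan is complete and correct.
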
 

\begin{proof}
It is straightforward to prove the desired conclusions for the values $e$. The deatils 
are left to the reader. 
\end{proof}

These are candidates of t-monomials. But it may be technical to prove that they 
are t-monomials. In fact, the Dickson permutation monomial $x^5$ over $\gf(3^5)$ is in 
fact a d-monomial, as the block size $|B_{(x^5, b, c)}|$ is $153$ for all $(b, c) 
\in \gf(3^5)^* \times \gf(3^5)$. 

Monomials $x^e$ with $\gcd(e, q-1) \neq 1$ may also support $2$-designs. For example, 
$x^2$ over $\gf(3^m)$ supports a $2$-$(3^m, (3^m+1)/2, (3^m+1)/4)$ symmetric design, 
which is the development of the difference set defined by all the squares in $\gf(3^m)$. 
We have also the following conjecture. 

\begin{conj}\label{conj-121} 
Let $m \geq 3$ be odd. Define $k_{m}$ by the recurrence relation 
$$ 
k_m = \frac{3^m+1}{2} + 3^{m-1} - 3 k_{m-2}  
$$ 
with initial vale $k_1=2$. Then 
\begin{eqnarray*}
\left| B_{(x^{10}, b, c)} \right|= 
\left\{
\begin{array}{ll}
\frac{3^m+1}{2}  & \mbox{ with $3^m$ times,} \\
k_m              & \mbox{ with $(3^m-1)3^m$ times}
\end{array}
\right.  
\end{eqnarray*} 
and 
$$ 
|\cB_{(x^{10}, (3^m+1)/2)}|=3^m, \ |\cB_{(x^{10},k_m)}|=\frac{3^m(3^m-1)}{2}.  
$$ 
Further, 
\begin{itemize}
\item $(\gf(3^m), \cB_{(x^{10}, (3^m+1)/2)})$ is a $2$-$(3^m, \, (3^m+1)/2, \, (3^m+1)/4)$ symmetric 
design, which is the development of the difference set consisting of all the squares 
in $\gf(3^m)$; and   
\item $(\gf(3^m), \cB_{(x^{10}, k_m)})$ is a $2$-$(3^m, \, k_m, \, k_m(k_m-1)/2)$ design. 
\end{itemize}
\end{conj}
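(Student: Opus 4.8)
The plan is to reduce the whole statement to the study of the single set $J=\{x^{10}+x:x\in\gf(3^m)\}$ together with the group that preserves the relevant block collections, isolating the value $k_m=|J|$ as the one genuinely hard ingredient. Throughout put $q=3^m$ with $m\ge 3$ odd, and record the two elementary facts $\gcd(9,q-1)=1$ (so $x\mapsto x^9$ is a permutation of $\gf(q)$) and $\gcd(10,q-1)=2$.

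I would first determine all the block sizes. Since adding $c$ is a bijection, $|B_{(x^{10},b,c)}|=|\{x^{10}+bx:x\in\gf(q)\}|$. For $b=0$ this is $|\{x^{10}:x\in\gf(q)\}|$, which equals the number of squares of $\gf(q)$ including $0$, namely $(q+1)/2$, since $\gcd(10,q-1)=2$; write $S=\{x^2:x\in\gf(q)\}$. For $b\neq 0$, substituting $x=b^{9^{-1}}y$ (exponent mod $q-1$) gives $x^{10}+bx=b^{10\cdot 9^{-1}}(y^{10}+y)$, hence
\begin{align*}
B_{(x^{10},b,c)}=b^{10\cdot 9^{-1}}\,J+c ,
\end{align*}
so $|B_{(x^{10},b,c)}|=|J|=:k_m$ is independent of $b\neq 0$, and $b\mapsto b^{10\cdot 9^{-1}}$ is $2$-to-$1$ onto the nonzero squares with fibres $\{b,-b\}$ (because $\gcd(10\cdot 9^{-1},q-1)=2$). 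This already gives the census: the value $(q+1)/2$ occurs for exactly the $q$ pairs with $b=0$, and the value $k_m$ for the other $q(q-1)=3^m(3^m-1)$ pairs, once $k_m\neq(q+1)/2$ is known (which follows from the size estimate below).

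The heart of the conjecture, and the step I expect to be the main obstacle, is the recurrence for $k_m=|J|$. The natural attack is to count collisions $N=|\{(x,y):x\neq y,\ x^{10}+x=y^{10}+y\}|$: factoring $x^{10}+x=x(x+1)^9$ in characteristic $3$, one checks that a collision forces $x\neq\pm y$ and $(x+y)(x^8+x^6y^2+x^4y^4+x^2y^6+y^8)=-1$; passing to $u=x+y$, $p=xy$ (so $x^2+y^2=u^2+p$) this becomes $u\bigl((u^2+p)^4+p^4\bigr)=-1$ subject to $u^2-p$ being a nonzero square, so $N$ is a quadratic-character-weighted point count on an explicit plane curve over $\gf(3^m)$; then $k_m$ is recovered from $\sum_i N_i=k_m$, $\sum_i i N_i=q$, $\sum_i i(i-1)N_i=N$, where $N_i$ is the number of values taken exactly $i$ times. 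Extracting the stated relation $k_m=\tfrac{3^m+1}{2}+3^{m-1}-3k_{m-2}$ from this count is the real difficulty: the index shift $m\mapsto m-2$ is not a subfield phenomenon, so it will require a dedicated argument — a Gauss/Jacobi-sum evaluation, or a recursive description of the fibres of $x\mapsto x^{10}+x$ — which is why the statement is posed only as a conjecture. (The recurrence predicts $k_m\sim\tfrac58\,3^m$; in particular it gives $k_m\neq(q+1)/2$ for $m\ge 3$ and $k_m\equiv 2\pmod 3$, both of which are used below.)

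Granting the block sizes, the remaining assertions are comparatively soft. The $q$ blocks with $b=0$ are the translates $S+c$; they are pairwise distinct since a nonzero period of $S$ would force $3\mid|S|=(3^m+1)/2$, which is false, so $|\cB_{(x^{10},(q+1)/2)}|=q$ and this incidence structure is by definition $\mathrm{dev}(S)$ with $S$ the set of all squares of $\gf(q)$. For the blocks of size $k_m$ we have $\cB_{(x^{10},k_m)}=\{sJ+c:\ s\ \text{a nonzero square},\ c\in\gf(q)\}$, each arising from exactly the two pairs $(b,c),(-b,c)$; to obtain exactly $q(q-1)/2$ distinct blocks one rules out further coincidences, i.e. shows that $tJ+e=J$ with $t$ a nonzero square forces $(t,e)=(1,0)$ — the additive part because $3\nmid k_m$, and a nontrivial square multiplier because $\mathrm{ord}(t)\mid\gcd(k_m-1,(q-1)/2)$, which is then eliminated (the only place the fine arithmetic of $k_m$ re-enters). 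Finally, set $H=\{sx+c:\ s\ \text{a nonzero square},\ c\in\gf(q)\}$; since the nonzero squares are multiplicatively closed, $H$ fixes $S$ up to translation and $J$ up to square-scaling and translation, so $H$ permutes both $\cB_{(x^{10},(q+1)/2)}$ and $\cB_{(x^{10},k_m)}$. Because $m$ is odd, $3^m\equiv 3\pmod 4$, hence $-1$ is a nonsquare, so transposing a pair of distinct points interchanges the two $H$-orbits of ordered pairs (those with square, resp. nonsquare, difference); therefore $H$ is transitive on the $2$-subsets of $\gf(q)$, and both incidence structures are $2$-designs with $\lambda$ forced by $\lambda\binom{q}{2}=b\binom{k}{2}$: with $b=q$, $k=(q+1)/2$ this is the symmetric $2$-$(q,(q+1)/2,(q+1)/4)$ design, equal to $\mathrm{dev}(S)$; with $b=q(q-1)/2$, $k=k_m$ it is the $2$-$(q,k_m,k_m(k_m-1)/2)$ design, as claimed.
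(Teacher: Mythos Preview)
The paper does not prove this statement; it is stated explicitly as a \emph{conjecture} with no accompanying proof, followed only by the remark that the second design ``would be interesting'' if the conjecture holds. So there is no paper argument to compare your proposal against.

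Your plan is a sound dissection of the conjecture into its soft and hard parts. The reductions $B_{(x^{10},0,c)}=S+c$ and $B_{(x^{10},b,c)}=b^{10\cdot 9^{-1}}J+c$ for $b\neq 0$ are correct, as is the use of $H=\{sx+c:s\ \text{a nonzero square}\}$ and the observation that $-1$ is a nonsquare when $m$ is odd, making $H$ transitive on $2$-subsets and hence forcing the $2$-design property whenever the block size is constant. You also correctly flag the recurrence $k_m=\tfrac{3^m+1}{2}+3^{m-1}-3k_{m-2}$ as the genuine obstruction; your collision-count strategy via $x^{10}+x=x(x+1)^9$ is a reasonable entry point, but, as you note, producing an $m\mapsto m-2$ recursion from a character-sum over $\gf(3^m)$ is not routine and is precisely why the paper leaves this open.

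There is one point where your ``comparatively soft'' label is too optimistic. To get $|\cB_{(x^{10},k_m)}|=q(q-1)/2$ you need that $tJ=J$ with $t$ a nonzero square forces $t=1$. Your argument reduces this to $\mathrm{ord}(t)\mid\gcd(k_m-1,(q-1)/2)$ and then says the nontrivial case ``is then eliminated''. But you have not shown $\gcd(k_m-1,(q-1)/2)=1$ in general, nor given any alternative mechanism to rule out a nontrivial square multiplier of $J$; this step is itself part of the conjecture, not a consequence of what you have established. Consequently the exact value $\lambda=k_m(k_m-1)/2$ for the second design remains conditional on both the recurrence and this multiplier claim, not just on the recurrence alone.
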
 

If Conjecture \ref{conj-121} is true, the design $(\gf(3^m), \cB_{(x^{10}, k_m)})$ would be 
interesting. The following is a fundamental open problem.  

\begin{open} 
Is there a polynomial $f(x)$ over $\gf(q)$ with odd $q$ such that $\bD(f, k)$ is 
a $3$-design for some $k$? 
\end{open}

\section{An extended construction of $t$-designs from polynomials}\label{sec-extdendedconstr} 

In the construction of designs introduced in Section \ref{sec-construct}, 
not every polynomial $f$ supports a $2$-design $\bD(f, k)$. Only special polynomials 
over $\gf(q)$ can support a $2$-design. In this section, we outline an extended  
construction of $2$-designs from polynomials over finite fields $\gf(q)$. 

Let $f(x)$ be a polynomial over $\gf(q)$. For each $(a,b,c) \in \gf(q)^3$, we define 
\begin{eqnarray}
\hat{B}_{(f,a,b,c)}=\{af(x)+bx+c: x \in \gf(q)\}.  
\end{eqnarray} 
Let $k$ be any integer with $2 \leq k \leq q$. Define 
\begin{eqnarray}
\hat{\cB}_{(f,k)}=\{\hat{B}_{(f,a,b,c)}: |\hat{B}_{(f,a,b,c)}|=k, \ (a,b,c) \in \gf(q)^3\}.  
\end{eqnarray} 
We have then the following result. 

\begin{theorem}\label{thm-march131}
Let notation be the same as before. If $|\hat{\cB}_{k}|>1$, then the incidence structure 
$\hat{\bD}(f, k)=(\gf(q), \hat{\cB}_{(f,k)})$ is a $2$-$(q^m, k, \lambda)$ design for some 
$\lambda$. 
\end{theorem}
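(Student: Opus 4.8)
The plan is to run the same group-action argument used for Theorem~\ref{thm-main2design1}: I would exhibit a permutation group of the point set $\gf(q)$ that acts doubly transitively and stabilises the block family $\hat{\cB}_{(f,k)}$, and then conclude via Proposition~4.6 of \cite[p.~175]{BJL}. The natural candidate is again the general affine group $\GA_1(\gf(q)) = \{ux+v : (u,v) \in \gf(q)^* \times \gf(q)\}$, which is doubly transitive on $\gf(q)$.

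The first and only substantive step is the block-relabelling identity. For $\sigma(x) = ux+v \in \GA_1(\gf(q))$ and any $(a,b,c) \in \gf(q)^3$,
\[
\sigma\left(\hat{B}_{(f,a,b,c)}\right) = \{\, u(af(x)+bx+c)+v : x \in \gf(q) \,\} = \hat{B}_{(f,\,ua,\,ub,\,uc+v)} .
\]
Thus $\sigma$ maps each set $\hat{B}_{(f,a,b,c)}$ onto another set of the same form; since $\sigma$ is a bijection of $\gf(q)$ it preserves cardinalities, so it carries blocks of size $k$ to blocks of size $k$ and hence maps $\hat{\cB}_{(f,k)}$ into itself. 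As $\sigma^{-1}$ again lies in $\GA_1(\gf(q))$, this is actually a bijection of $\hat{\cB}_{(f,k)}$, so $\GA_1(\gf(q))$ embeds in the automorphism group of $\hat{\bD}(f,k)$.

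It then remains to invoke the standard fact that a nonempty family of $k$-subsets of a point set which is invariant under a doubly transitive group forms a $2$-design: double transitivity forces the number of blocks through a pair of distinct points to be a constant $\lambda$, and $\lambda \ge 1$ because each of the $|\hat{\cB}_{(f,k)}|$ blocks already covers $\binom{k}{2} \ge 1$ pairs. The hypothesis $|\hat{\cB}_{(f,k)}| > 1$, together with the standing convention $v > k$, rules out the degenerate single-block situation, such as the always-present block $\gf(q) = \hat{B}_{(f,0,1,0)}$.

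I do not anticipate a real obstacle here: all the content is in the identity $\sigma(\hat{B}_{(f,a,b,c)}) = \hat{B}_{(f,ua,ub,uc+v)}$, which shows that the new leading parameter $a$ transforms under $\GA_1(\gf(q))$ exactly as the parameters $b$ and $c$ did in the construction of Section~\ref{sec-construct}. The only point needing a little care is bookkeeping: $\hat{\cB}_{(f,k)}$ is a set with no repeated blocks, so one should view $\GA_1(\gf(q))$ as acting on the collection of distinct $k$-subsets rather than on the triples $(a,b,c)$, but this does not affect the conclusion.
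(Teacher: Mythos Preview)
Your proposal is correct and follows exactly the paper's approach: show that $\GA_1(\gf(q))$ stabilises $\hat{\cB}_{(f,k)}$ (via the relabelling $\sigma(\hat{B}_{(f,a,b,c)}) = \hat{B}_{(f,ua,ub,uc+v)}$) and then invoke its double transitivity on $\gf(q)$. The paper's proof is a one-sentence version of what you wrote; your added detail on the relabelling identity and the bookkeeping about distinct blocks is all fine and does not diverge from the intended argument.
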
 

\begin{proof}
The desired conclusion follows from the facts that the general affine group $\GA_1(\gf(q))$ 
is a subgroup of the automorphism group of the incidence structure $\hat{\bD}(f, k)$, 
$\GA_1(\gf(q))$ fixes $\hat{\cB}_{(f,k)}$,  and 
$\GA_1(\gf(q))$ acts on $\gf(q)$ doubly transitively.  
\end{proof}

Theorem \ref{thm-march131} tells us that almost every polynomial over $\gf(q)$ gives 
$2$-designs under this extended construction $\hat{\bD}(f, k)$. This fact makes the 
extended construction $\hat{\bD}(f,k)$ less interesting than the previous one $\bD(f,k)$, 
though many $2$-designs with nice parameters may be obtained by choosing 
special types of polynomials. However, it would be very nice if this extended construction 
$\hat{\bD}(f,k)$ can produce $t$-designs with $t \geq 3$. 

It is easily seen that for any o-monomial $x^e$ over $\gf(2^m)$, we have 
$\hat{\bD}(x^e, 2^{m-1})=\bD(x^e, 2^{m-1})$. Hence, it is indeed a $3$-design, 
but was already covered by the construction $\bD(x^e, 2^{m-1})$. 

Recall that $\bD(f, 2^{m-1})$ is only a $2$-design if $f$ is the Cherowitzo or Payne 
trinomial. What will happen if we plug the Cherowitzo and Payne trinomials 
into this extended construction? Regarding this question, we have the following. 

\begin{theorem}\label{thm-march136}
Let $m \geq 4$ and $q=2^m$. Then the incidence structure $\hat{\bD}(f, q/2)=(\gf(q), \hat{\cB}_{(f,q/2)})$ is a 
$3$-$(q, q/2, (q-4)(q-1)q/8)$ design if $f$ is an o-polynomial over $\gf(q)$ with 
$|\hat{\cB}_{(f,q/2)}|=q(q-1)^2$. 
\end{theorem}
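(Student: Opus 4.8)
The plan is to imitate the proof of Theorem~\ref{thm-mar901}, but counting over the larger parameter space $(a,b,c)\in\gf(q)^3$ with $ab\neq0$ rather than over $(b,c)$ with $b\neq0$. First I would invoke Theorem~\ref{thm-march131} (with $\GA_1(\gf(q))$ doubly transitive and fixing $\hat{\cB}_{(f,q/2)}$) to know that $\hat{\bD}(f,q/2)$ is at least a $2$-design; the real content is upgrading $2$ to $3$. For that, fix three pairwise distinct points $u_1,u_2,u_3\in\gf(q)$; since we want to count blocks, not triples $(a,b,c)$, the hypothesis $|\hat{\cB}_{(f,q/2)}|=q(q-1)^2$ is exactly what guarantees that the map $(a,b,c)\mapsto\hat{B}_{(f,a,b,c)}$ restricted to $\{ab\neq0\}$ is a bijection onto the block set of size-$q/2$ blocks (there are $(q-1)^2q$ triples with $ab\neq0$, and Theorem~\ref{thm-opoly2to1} gives $|\hat{B}_{(f,a,b,c)}|=q/2$ whenever $ab\neq0$, so the assumption forces all these blocks to be pairwise distinct). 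Hence the number of blocks containing $\{u_1,u_2,u_3\}$ equals $|I(u_1,u_2,u_3)|$ from Lemma~\ref{lem-mar91}.

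Then I would simply quote Lemma~\ref{lem-mar91}: for any $o$-polynomial $f$ and any triple with $(u_1+u_2)(u_2+u_3)(u_3+u_1)\neq0$ (which, in characteristic $2$, is the same as the three $u_i$ being pairwise distinct), we have
\begin{eqnarray*}
|I(u_1,u_2,u_3)|=\frac{q(q-1)(q-4)}{8}.
\end{eqnarray*}
Since this value is independent of the choice of the three distinct points, the incidence structure $\hat{\bD}(f,q/2)$ is a $3$-$(q,q/2,\lambda)$ design with
\begin{eqnarray*}
\lambda=\frac{q(q-1)(q-4)}{8}=\frac{(q-4)(q-1)q}{8},
\end{eqnarray*}
which is the claimed parameter. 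The condition $m\geq4$ (so $q\geq16$) just ensures $\lambda>0$ and that $q/2>3$, so the design is nontrivial; I would note this in passing.

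\textbf{Main obstacle.} The only genuinely delicate point is justifying that the hypothesis $|\hat{\cB}_{(f,q/2)}|=q(q-1)^2$ really does force the block-repetition count to be uniformly $1$ on the $\{ab\neq0\}$ locus, so that ``number of triples containing $\{u_1,u_2,u_3\}$'' coincides with ``number of blocks containing $\{u_1,u_2,u_3\}$'' — i.e. that the implicit block multiplicities all equal $1$. This is a clean counting argument (total triples with $ab\neq0$ is $q(q-1)^2$, each yields a size-$q/2$ block, and a set with $q(q-1)^2$ distinct such blocks coming from $q(q-1)^2$ triples must have each triple map to its own block), but it must be stated carefully; everything else is a direct appeal to Lemma~\ref{lem-mar91} and Theorem~\ref{thm-march131}. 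One should also remark that the passage from $(u_1+u_2)(u_2+u_3)(u_3+u_1)\neq0$ to ``$u_1,u_2,u_3$ pairwise distinct'' uses $\mathrm{char}(\gf(q))=2$, where $u_i+u_j\neq0\iff u_i\neq u_j$.
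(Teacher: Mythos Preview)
Your proposal is correct and follows essentially the same approach the paper sketches: the paper's own proof merely says ``Lemmas~\ref{lem-mar91} and~\ref{lem-mar92} can be modified into a proof of the desired result,'' and what you have written is precisely that modification, carried out in full. In fact you observe (correctly) that Lemma~\ref{lem-mar91} applies \emph{verbatim}---no modification needed---once the hypothesis $|\hat{\cB}_{(f,q/2)}|=q(q-1)^2$ is unpacked as forcing the map $(a,b,c)\mapsto\hat{B}_{(f,a,b,c)}$ on $\{ab\neq0\}$ to be a bijection onto the block set; your pigeonhole argument for this (surjection between finite sets of equal size) is exactly right, as is your verification via Theorem~\ref{thm-opoly2to1} that $ab\neq0$ is precisely the locus giving size-$q/2$ blocks.
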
 

\begin{proof}
Lemmas \ref{lem-mar91} and \ref{lem-mar92} can be modified into a proof of the 
desired result. The details are omitted.  
\end{proof}

Theorem \ref{thm-march136} is valuable only when there is an o-polynomial over $\gf(q)$ with 
$|\hat{\cB}_{(f,k)}|=q(q-1)^2$. In fact, we have the following conjecture.   

\begin{conj}\label{con-march132} 
Let $m \geq 5$ be odd and $q=2^m$. Let $f(x)$ be an o-polynomial over $\gf(q)$ such 
that $f(x) \neq (ax+b)^e+b^e$ for all o-monomials $y^e$ and all $(a,b) \in \gf(q)^2$. 
Then $|\hat{\cB}_{(f,q/2)}|=q(q-1)^2$. 
\end{conj}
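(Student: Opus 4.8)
The plan is to recast the assertion as an injectivity statement and then to show that any failure of injectivity would force $f$ into the excluded family. By Theorem~\ref{thm-opoly2to1} the map $x\mapsto af(x)+bx+c$ is $2$-to-$1$ precisely when $a\neq0$ and $b\neq0$ (if $a=0$ it is constant or a permutation; if $a\neq0$ and $b=0$ it is the permutation $af(x)+c$), so exactly $q(q-1)^2$ triples contribute a block of size $q/2$, and $|\hat{\cB}_{(f,q/2)}|=q(q-1)^2$ is equivalent to the injectivity of $(a,b,c)\mapsto\hat{B}_{(f,a,b,c)}$ on $\gf(q)^*\times\gf(q)^*\times\gf(q)$. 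Writing $u=b/a$ and $V_u:=\{f(x)+ux:x\in\gf(q)\}$ we have $\hat{B}_{(f,a,b,c)}=c+aV_u$, and $\AG_1(\gf(q))$ acts \emph{freely} on these triples via $(r,s)\colon(a,b,c)\mapsto(ra,rb,rc+s)$, with representatives $(1,u,0)$ carrying the block $V_u$. Hence the claim is equivalent to: the $q-1$ sets $V_u$ are pairwise inequivalent under $\AG_1(\gf(q))$ and each has trivial stabiliser $\Stab_{\AG_1(\gf(q))}(V_u)=\{1\}$.

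This isolates the real point. If $f(x)=(ax+b)^e+b^e$ with $x^e$ an o-monomial, then $f((x+b)/a)+b^e=x^e$, so after one affine substitution $V_u$ becomes a translate of $(u/a)^{e/(e-1)}J_e$ with $J_e=\{y^e+y\}$ fixed; the $V_u$ are then all affine images of one set (and when $e$ is a power of $2$, $J_e$ is an additive subgroup with large stabiliser, forcing $|\hat{\cB}|=2(q-1)$). The conjecture asserts this is the only obstruction, so the concrete target is: for $u\neq u'$ there is no $\phi\in\AG_1(\gf(q))$ with $\phi(V_u)=V_{u'}$, and $\Stab_{\AG_1(\gf(q))}(V_u)=\{1\}$; and either failure should imply $f$ is monomial-shifted.

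To prove both, I would aim to show that a coincidence $c+aV_u=c'+a'V_{u'}$ — i.e.\ that the two $2$-to-$1$ polynomials $f(x)+ux$ and $\lambda(f(x)+u'x)+\gamma_0$ have equal value sets — forces a functional identity $f(\alpha x+\beta)=\gamma f(x)+\delta$ with $(\alpha,\beta,\gamma,\delta)\neq(1,0,1,0)$. The hoped-for leverage is condition~(2) of Theorem~\ref{thm-hyperovaloply}: the $2$-to-$1$ fibres of $f(x)+vx$ are governed by the secant slopes $(f(x)+f(y))/(x+y)$ of the hyperoval $\cH(f)$, and this slope structure is rigid enough that the set bijection identifying the two value sets should be forced to respect the fibres, and then to be affine. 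Granting such an identity, a coefficient comparison in $f(\alpha x+\beta)=\gamma f(x)+\delta$ — using that $\binom{e}{i}\not\equiv0\pmod{2}$ only when the binary support of $i$ lies under that of $e$ (Lucas) — makes all intermediate coefficients vanish, so $f$ is a monomial up to the affine substitution, contradicting the hypothesis (for the stabiliser case one takes $u'=u$). Dually, $\hat{B}_{(f,a,b,c)}$ records the secant/exterior pattern in the pencil of lines through the external point $(b:a:0)$ of $\cH(f)$, and a coincidence is an exterior-line-preserving affine map between two such pencils fixing the line at infinity; one would like to extend it to a collineation stabilising $\cH(f)$ and invoke the known smallness of $\Aut(\cH(f))$ for non-monomial hyperovals.

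The crux, and the main obstacle, is precisely this first move of the third step: equality of the value sets of two $2$-to-$1$ polynomials is strictly weaker than a functional identity, and teasing out the identity from the slope rigidity of $\cH(f)$ is delicate (a naive count of slopes does not suffice, and dually a single coincidence pins a collineation down only along one pencil, not globally). Once the identity is in hand, the conjecture is essentially immediate for every concrete o-polynomial one needs to treat: for the Cherowitzo and Payne trinomials (and, with more labour, the Subiaco family) a coefficient comparison in $f(\alpha x+\beta)=\gamma f(x)+\delta$ forces the trivial solution, so no nontrivial coincidence can occur — the whole difficulty is concentrated in the bridge. Absent a classification of o-polynomials a fully uniform argument seems out of reach, so the realistic route is to prove the bridge in the widest generality possible and otherwise dispatch the named families directly; here the Walsh-transform identity in the proof of Lemma~\ref{lem-march147} remains valid for any $f$ (namely $q^2=\sum_{\beta}\hat{h}_u(\beta)\,\hat{h}_{u'}(\beta/r)\,(-1)^{\tr(s\beta/r)}$ with $\hat{h}_u(\beta)=-\sum_y(-1)^{\tr(\beta f(y)+\beta u y)}$ for $\beta\neq0$, the case $(r,s,u')=(1,0,u)$ being just Parseval), but its evaluation there exploited the bent-like two-valued spectrum special to monomial-type characteristic functions, so a fresh estimate of the exponential sums of the trinomials would be needed on that route. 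The translation monomials, where $|\hat{\cB}|=2(q-1)$, confirm en passant that the hypothesis cannot be weakened and that $q(q-1)^2$ is sharp.
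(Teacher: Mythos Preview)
The statement is labelled a \emph{conjecture} in the paper (Conjecture~\ref{con-march132}); the paper provides no proof, says it ``might be hard to settle in general,'' confirms it only by Magma for the Cherowitzo, Payne and Subiaco polynomials with $m\in\{5,7,9\}$, and invites the reader to attack it. There is thus no proof in the paper to compare against.

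Your reformulation is correct and clarifying: $|\hat{\cB}_{(f,q/2)}|=q(q-1)^2$ is equivalent to the $q-1$ value sets $V_u=\{f(x)+ux:x\in\gf(q)\}$ lying in pairwise distinct $\AG_1(\gf(q))$-orbits, each with trivial stabiliser, and the excluded family $f(x)=(ax+b)^e+b^e$ does collapse all $V_u$ into a single orbit. But, as you yourself concede, the decisive ``bridge'' --- passing from an equality of \emph{sets} $\phi(V_u)=V_{u'}$ to a functional identity $f(\alpha x+\beta)=\gamma f(x)+\delta$ --- is not established. Equality of value sets of two $2$-to-$1$ maps does not by itself yield a fibre-preserving bijection, and neither the slope-rigidity heuristic nor the collineation-extension heuristic you sketch is carried through to an argument; the Walsh-transform route would require spectral estimates for the specific trinomials that are not available (Lemma~\ref{lem-xiang} is special to the monomial case). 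Even your proposed treatment of the named families presupposes this bridge. Your proposal is an accurate diagnosis of where the difficulty lies rather than a proof, and it leaves the conjecture exactly where the paper leaves it.
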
   

It might be hard to settle Conjecture \ref{con-march132} in general. But it is possible to prove 
the conjecture for the Cherowitzo, Payne and Subiaco o-polynomials. The reader is 
invited to attack this conjecture. 

We inform the reader that Conjecture \ref{con-march132} is indeed true for the Cherowitzo trinomial, 
Payne trinomial and Subiaco polynomials for $m \in \{5, 7, 9\}$ according to Magma experimental 
data. Hence, $3$-designs have been indeed obtained from this extended construction 
$\hat{\bD}(f, q/2)$ with o-polynomials introduced in this section. Recall that 
$\hat{\bD}(f, q/2)$ is always a $2$-design for any o-polynomial $f$ over $\gf(q)$ by  
Theorem \ref{thm-march136}, and a $3$-design for any o-monomial over $\gf(q)$,  
where $q=2^m$.

\section{Summary and concluding remarks}\label{sec-summary} 

The main contributions of this paper are the following: 
\begin{enumerate}
\item The first one is the two general constructions of $t$-designs with polynomials over 
      finite fields 
      documented in Sections \ref{sec-construct} and \ref{sec-extdendedconstr}. Many 
      types of polynomials may be plugged into the two constructions for obtaining 
      $t$-designs with different parameters. 
\item The second is the application of o-polynomials in $t$-designs under the frameworks 
      of the two 
      general constructions. The first construction has produced infinite families of 
      $3$-designs from o-monomials over $\gf(2^m)$, and infinite families of $2$-designs 
      from o-polynomials over $\gf(2^m)$. The second construction has given $2$-designs 
      and also $3$-designs from o-polynomials over $\gf(2^m)$.          
\end{enumerate} 

Some of the $2$-designs obtained in this paper are affine-invariant, while other 
$2$-designs are not affine-invariant and thus interesting. Some $3$-designs presented 
in this paper are indeed affine-invariant, but their automorphism groups are only doubly 
transitive on their point sets. So the $3$-design property of these designs had to be 
proved with direct approaches. This makes these $3$-designs very special. 

Since the two constructions of $t$-designs are quite general, a lot of work can be done 
in this direction. Several open problems and conjectures were presented in this paper. 
The reader is cordially invited to join the venture into the topic of this paper.

%\section*{Acknowledgements} 

%The author is very grateful to the reviewers for their comments that improved the quality and presentation of this paper.  

\end{document}